\colorlet{siaminlinkcolor}{green!50!black}
\colorlet{siamexlinkcolor}{red!100!black}
\newcommand{\thickhline}{%
	\noalign {\ifnum 0=`}\fi \hrule height 1.2pt
	\futurelet \reserved@a \@xhline
}
\newcommand{\R}{\mathbb{R}}
\newcommand{\C}{\mathbb{C}}
\newcommand{\vx}{\mathbf{x}}
\newcommand{\dxp}{{\delta_x^+}}
\newcommand{\dxx}{{\delta_x^2}}
\newcommand{\rmd}{\mathrm{d}}
\newcommand{\vep}{\varepsilon}
\newcommand{\pbtt}[1]{{\Phi_{B_2}^t(#1)}}
\newcommand{\B}[1]{B(#1)}
\newcommand{\fe}{f_\vep}
\newcommand{\G}[1]{G(#1)}
\newcommand{\expit}[1]{e^{- i \tau (V + f(|#1|^2))}}
\newcommand{\expitVjtheta}[1]{e^{- i \tau (V_j^\theta + f(|#1|^2))}}
\newcommand{\vini}{v_0}
\newcommand{\p}{^\prime}
\newcommand{\pp}{^{\prime \prime}}
\DeclareMathOperator{\sinc}{sinc}
\newtheorem{theorem}{Theorem}[section]
\newtheorem{lemma}[theorem]{Lemma}
\newtheorem{proposition}[theorem]{Proposition}
\newtheorem{corollary}[theorem]{Corollary}
\theoremstyle{definition}
\theoremstyle{remark}
\newtheorem{remark}[theorem]{Remark}
\numberwithin{equation}{section}
\numberwithin{figure}{section}
\crefname{example}{Example}{Examples}
\crefname{hypothesis}{Hypothesis}{Hypotheses}
\crefname{conj}{Conjecture}{Conjectures}
\begin{document}

\title[Error estimates for NLSE with semi-smooth nonlinearity]{Error estimates of the time-splitting methods for the nonlinear Schr\"odinger equation with semi-smooth nonlinearity}


\author[W. Bao]{Weizhu Bao}
\address{Department of Mathematics, National University of Singapore, Singapore 119076}
\email{matbaowz@nus.edu.sg}
\thanks{This work was partially supported by the Ministry of Education of Singapore under its  AcRF Tier 2 funding MOE-T2EP20122-0002 (A-8000962-00-00) (W. Bao). }

\author[C. Wang]{Chushan Wang}
\address{Department of Mathematics, National University of Singapore, Singapore 119076}
\email{E0546091@u.nus.edu}

\subjclass[2020]{Primary 35Q55, 65M15, 65M70, 81Q05}

\date{}
\keywords{nonlinear Schr\"odinger equation, semi-smooth nonlinearity, time-splitting pseudospectral method, error estimate, local regularization}

\begin{abstract}
We establish error bounds of the Lie-Trotter time-splitting sine pseudospectral method for the nonlinear Schr\"odinger equation (NLSE) with semi-smooth nonlinearity $ f(\rho) = \rho^\sigma$, where $\rho=|\psi|^2$ is the density with $\psi$ the wave function and $\sigma>0$ is the exponent of the semi-smooth nonlinearity. Under the assumption of $ H^2 $-solution of the NLSE,
we prove error bounds at $ O(\tau^{\frac{1}{2}+\sigma} + h^{1+2\sigma}) $ and $ O(\tau + h^{2}) $ in $ L^2 $-norm for $0<\sigma\leq\frac{1}{2}$ and
$\sigma\geq\frac{1}{2}$, respectively, and  an error bound at
$ O(\tau^\frac{1}{2} + h) $ in $ H^1 $-norm for $\sigma\geq \frac{1}{2}$, where
$h$ and $\tau$ are the mesh size and time step size, respectively.
In addition,
when $\frac{1}{2}<\sigma<1$ and under the assumption of $ H^3 $-solution of the NLSE, we show an error bound  at $ O(\tau^{\sigma} + h^{2\sigma}) $ in $ H^1 $-norm. Two key ingredients are adopted
in our proof: one is to adopt an unconditional $ L^2 $-stability of the numerical flow in order to avoid an a priori estimate of the numerical solution
for the case of $ 0 < \sigma \leq \frac{1}{2}$, and to establish an $ l^\infty $-conditional $ H^1 $-stability to obtain the $ l^\infty $-bound of the numerical solution by using
the mathematical induction and the error estimates for the case of $ \sigma \ge \frac{1}{2}$; and the other one is to introduce a regularization technique to avoid the singularity of the semi-smooth nonlinearity in obtaining improved local truncation errors. Finally, numerical results are reported to demonstrate our error bounds.
\end{abstract}

\maketitle


\section{Introduction}
In this paper, we consider the following nonlinear Schr\"odinger equation (NLSE)
\begin{equation}\label{NLSE}
	i \partial_t \psi(\vx, t) = -\Delta \psi(\vx, t) + V(\vx) \psi(\vx, t) + f(|\psi(\vx, t)|^2) \psi(\vx, t), \quad \vx \in \Omega, \quad t>0,
\end{equation}
with the initial data
\begin{equation}\label{init}
	\psi(\vx, 0) = \psi_0(\vx), \quad \vx \in \overline{\Omega},
\end{equation}
and the homogeneous Dirichlet boundary condition
\begin{equation}\label{bound}
\psi(\vx, t)=0, \quad \vx \in \partial \Omega, \quad t \geq 0, 
\end{equation}
where $t$ is time, $\vx\in \R^d$ ($d=1, 2, 3$) is the spatial coordinate, $ \psi:=\psi(\vx, t) $ is a complex-valued wave function, and $ V:=V(\vx): \Omega \rightarrow \R $ is a time-independent real-valued potential. Here $ \Omega = \Pi_{i=1}^d (a_i, b_i) \subset \R^d $ is a bounded domain, and the nonlinearity is given as
\begin{equation}\label{semi-smooth}
	f(\rho) = \beta \rho^\sigma, \quad \rho:=|\psi|^2 \geq 0,
\end{equation}
where $\beta \in \R$ is a given constant and $\sigma > 0$ is the exponent of the nonlinearity.
The NLSE \cref{NLSE} conserves the mass
\begin{equation}
	M(\psi(\cdot, t)) = \int_{\Omega} |\psi(\vx, t)|^2 \rmd \vx \equiv M(\psi_0), \quad t \geq 0,
\end{equation}
and the energy
\begin{equation}
\begin{aligned}
E(\psi(\cdot, t))&= \int_{\Omega} \left[ |\nabla \psi(\vx, t)|^2 + V(\vx)|\psi(\vx, t)|^2 + F(|\psi(\vx, t)|^2) \right] \rmd \vx \\
&\equiv E(\psi_0), \quad t \geq 0,
	\end{aligned}
\end{equation}
where the interaction energy density $F(\rho)$ is given as
\begin{equation}\label{eq:F}
	F(\rho) = \int_0^\rho f(s) \rmd s = \frac{\beta}{\sigma+1} \rho^{\sigma+1}, \quad \rho \geq 0.
\end{equation}

When $\sigma=1$ in \eqref{semi-smooth}, i.e. $ f(\rho) = \beta \rho $ and
$ F(\rho) = \frac{\beta}{2} \rho^2 $, \cref{NLSE} collapses to the well-known nonlinear Schr\"odinger equation with cubic nonlinearity (or smooth nonlinearity), also known as the Gross-Pitaevskii equation (GPE), which has been widely adopted for modeling and simulation in quantum mechanics, nonlinear optics, and Bose-Einstein condensation \cite{review_2013,ESY,NLS}. Arising from different physics applications,
semi-smooth nonlinearity is introduced in the NLSE \cref{NLSE}, i.e.
$\sigma$ is taken as a non-integer in \eqref{semi-smooth}.
Typical examples include, in the Schr\"{o}dinger-Poisson-X$ \alpha $ model
with $ f(\rho) = -\alpha\rho^{1/d} (\alpha>0) $ \cite{bao2003,SPXalpha},
i.e. $\sigma=\frac{1}{3}$ and $\sigma=\frac{1}{2}$ in three dimensions (3D) and
two dimensions (2D), respectively; in the LHY correction (a next-order correction of the ground state energy proposed by Lee, Huang and Yang in 1957 \cite{LHY}) for a beyond-mean-field term which is widely adopted in modeling and simulation for quantum droplets \cite{QD1,QD2,QD3,QD4,QD5} with $ f(\rho) = \rho^{3/2} $ in 3D, i.e. $\sigma=\frac{3}{2}$, $ f(\rho) = \sqrt{\rho} $ in one dimension (1D), i.e. $\sigma=\frac{1}{2}$, and $ f(\rho) = \rho \ln \rho $ in 2D; and in the mean field
model for Bose-Fermi mixture \cite{Had,Cai}, with $f(\rho)=\rho^{2/3}$, i.e. $\sigma=\frac{2}{3}$. For all the aforementioned nonlinearities ({actually for all $ \sigma > 0 $ when $d=1,2,3$}), the NLSE \cref{NLSE} is well-posed in $ H^2 $ {under suitable assumptions on $V$, e.g. $ V \in L^p $ with $p \geq 1$ and $p > d/2$ \cite{kato1987,cazenave2003}.} However, to our best knowledge, there is no guarantee of higher regularity to be propagated due to the low regularity of the semi-smooth nonlinearity, which is similar to the case of the logarithmic Schr\"odinger equation (LogSE) \cite{sinum2019,bao2019,bao2022}. In fact, similar to the LogSE, the low regularity of the solution of the NLSE with semi-smooth nonlinearity is mainly due to the low regularity of the nonlinearity. {We remark here that the potential $V$ could also be a source of low regularity of the solution, however, we will not consider the low regularity of $V$ in this paper but leave it as our future work. }

For the cubic NLSE, i.e. $ \sigma = 1 $, many accurate and efficient numerical methods have been proposed and analyzed in the last two decades, including the finite difference method \cite{FD,bao2013,review_2013,Ant}, the exponential wave integrator \cite{bao2014,ExpInt,SymEWI}, the time-splitting method \cite{bao2003JCP,BBD,lubich2008,schratz2016,review_2013,splitting_low_reg,Ant}, the finite element method \cite{FEM1,FEM2,FEM3,FEM4,henning2017}, etc. Recently, new low regularity integrators or resonance based Fourier integrators are designed and analyzed for the cubic NLSE with low regularity initial data since the important work by Ostermann and Schratz \cite{LRI}, followed by \cite{LRI_sinum,LRI_error,LRI_general,LRI_fulldisc,LRI_sec} and references therein for different dispersive partial differential equations. For all these numerical methods, 
optimal error bounds were rigorously established under different regularity assumptions
of the cubic NLSE. 

Most numerical methods for the cubic NLSE can be extended straightforwardly to solve the NLSE \cref{NLSE} with non-integer $ \sigma >0$, e.g. semi-smooth nonlinearity with $0<\sigma<1$, which is different from the NLSE with singular nonlinearity, where regularization may be needed \cite{sinum2019,bao2019,bao2022,bao2023singular}. However, due to the low regularity of solution of the NLSE \cref{NLSE}
with semi-smooth nonlinearity and the low regularity of the semi-smooth nonlinearity \eqref{semi-smooth} in the NLSE \cref{NLSE} which causes
order reduction in local truncation errors and results in difficulties in obtaining stability estimates, error analysis for different numerical methods applied to \cref{NLSE} with non-integer $ \sigma >0$ is a very subtle and challenging 
question! For example, first order temporal convergence of the finite difference method requires boundedness of the second-order time derivative, which roughly requires the exact solution to be in $ H^4 $, which is beyond the regularity property of the NLSE 
\cref{NLSE} with semi-smooth nonlinearity. In fact, based on our numerical experiments with a smooth initial datum $ \psi_0(x) = xe^{-x^2/2} $, it indicates that $ \psi(\cdot, t) \not \in H^4 $ for $ t>0 $ and $ \sigma $ small! Since the time-splitting methods usually need lower regularity requirements on the exact solution than the finite difference methods, in this work, we consider the time-splitting method and in particular the first-order Lie-Trotter splitting method due to the low regularity of the semi-smooth nonlinearity and the low regularity of the exact solution of \cref{NLSE}.

Error estimates of the time-splitting methods with different orders for the cubic NLSE (i.e. $ \sigma =1 $) have been well understood and we refer the readers to \cite{lubich2008,schratz2016,splitting_low_reg,Ant} and references therein.
However, for the NLSE with non-integer $ \sigma $, only limited results are established for the filtered Lie-Trotter splitting scheme which requires a strong CFL-type time step size restriction $\tau=O(h^2)$. In \cite{ignat2011}, first order convergence in $ L^2 $-norm is established for $ H^2 $-solution and $ \sigma \geq 1/2 $. Then generalized in \cite{choi2021}, half order convergence in $ L^2 $-norm is established for $ H^1 $-solution and $ \sigma>0 $. These convergence rates are optimal with respect to the regularity assumptions on the exact solution. However, there are still some questions related to error estimates to be addressed: (i) it is unclear whether higher convergence order can be obtained for $ H^2 $-solution when $ 0 < \sigma < 1/2 $; (ii) their results are established for the filtered Lie-Trotter scheme, which is a semi-discretization scheme with a specific strong CFL-type time step size restriction, and it loses mass conservation and time symmetric property in the discretized level; and (iii) there is no optimal error estimate in $ H^1 $-norm, which is the natural norm of the NLSE. 

The main aim of this paper is to establish error estimates of the time-splitting sine pseudospectral (TSSP) method \cref{full_discretization_scheme} for the NLSE \cref{NLSE} with semi-smooth nonlinearity. We remark here that the TSSP is a fully discrete scheme and it preserves many good properties of the original NLSE in the discretized level, including mass conservation and time symmetry as well as dispersion relation. When $0<\sigma\leq\frac{1}{2} $, under the assumption of $ H^2 $-solution of the NLSE, 
we prove error bounds at $ O(\tau^{\frac{1}{2}+\sigma} + h^{1+2\sigma}) $ in $ L^2 $-norm without any CFL-type time step size restriction, which also fill the gap between the results in \cite{ignat2011,choi2021}. When $ \sigma \geq 1/2 $, under the assumption of $ H^2 $-solution again, we prove error bounds at $ O(\tau + h^{2}) $ and $ O(\tau^\frac{1}{2} + h) $ in $ L^2 $-norm and $ H^1 $-norm, respectively, with a very mild CFL-type time step size restriction, which generalize the result in \cite{ignat2011} to the mass-conservative fully discrete scheme. In addition, when $\frac{1}{2}<\sigma<1$ and under the assumption of $ H^3 $-solution, we show a new error bound at $ O(\tau^{\sigma} + h^{2\sigma}) $ in $ H^1 $-norm.

The rest of the paper is organized as follows. In Section 2, we present the time-splitting sine pseudospectral (TSSP) method, introduce a local regularization for the semi-smooth nonlinearity to be used for obtaining improved local truncation errors and state our main results. Section 3 is devoted to error estimates of the TSSP method for $ 0 < \sigma \leq 1/2 $ and Section 4 is devoted to error estimates for $ \sigma \geq 1/2 $. Numerical results are reported in Section 5 to confirm the error estimates. Finally some conclusions are drawn in Section 6. Throughout the paper, we adopt the standard Sobolev spaces as well as the corresponding norms, and denote by $ C $ a generic positive constant independent of the mesh size $ h $, time step $ \tau $, and by $ C(\alpha) $ a generic positive constant depending on $ \alpha $. The notation $ A \lesssim B $ is used to represent that there exists a generic constant $ C>0 $, such that $ |A| \leq CB $.

\section{Numerical methods and main results}

\subsection{The TSSP method}
We shall use the Lie-Trotter splitting method for the temporal discretization and use the sine pseudospectral method for the spatial discretization. The operator splitting technique is based on the decomposition of the flow of \cref{NLSE}
\begin{equation}
	\partial_t \psi = A(\psi) + B(\psi),
\end{equation}
where
\begin{equation}\label{eq:AB}
	A(\psi) = i\Delta \psi, \quad B(\psi) = B_1(\psi) + B_2(\psi) := -i V \psi -i f(|\psi|^2) \psi,
\end{equation}
into two sub-problems. The first one is
\begin{equation}
	\left\{
	\begin{aligned}
		&\partial_t \psi(\vx, t) = A(\psi) = i \Delta \psi(\vx, t), \quad \vx \in \Omega, \quad t>0, \\
		&\psi(\vx, 0) = \psi_0(\vx), \quad \vx \in \overline{\Omega},
	\end{aligned}
	\right.
\end{equation}
which can be formally integrated exactly in time as
\begin{equation}
	\psi(\cdot, t) = e^{i t \Delta} \psi_0(\cdot), \quad t \geq 0.
\end{equation}
The second one is to solve
\begin{equation}\label{eq:sub2}
	\left\{
	\begin{aligned}
		&\partial_t \psi(\vx, t) = B(\psi) = -iV(\vx)\psi(\vx, t) -i f(|\psi(\vx, t)|^2)\psi(\vx, t), \quad \vx \in \Omega,\  t>0, \\
		&\psi(\vx, 0) = \psi_0(\vx), \quad \vx \in \overline{\Omega},
	\end{aligned}
	\right.
\end{equation}
which, by using the fact
$|\psi(\vx, t)|=|\psi_0(\vx)|$ for $t\ge0$, can be integrated exactly in time as
\begin{equation}\label{eq:phi_B_def}
	\psi(\vx, t) = \Phi_B^t (\psi_0) := e^{- i t V(\vx)} \pbtt{\psi_0(\vx)}, \quad \vx \in \overline{\Omega}, \quad t \geq 0,
\end{equation}
where
\begin{equation}\label{eq:phi_B_2_def}
	\quad \pbtt{z} = ze^{-i t f(|z|^2)}, \quad z \in \C, \quad t\ge0.
\end{equation}
{In fact, in the second subproblem \cref{eq:sub2}, the operator $B(\psi) = -i(V + f(|\psi_0|^2))\psi$ becomes a bounded linear operator. }

Choose a time step size  $ \tau > 0 $, denote time steps as $ t_k = k \tau $ for $ k = 0, 1, ... $, and let $ \psi^{[k]}:=\psi^{[k]}(\vx) $ be the approximation of $ \psi(\vx, t_k) $ for $ k \geq 0 $. Then a first order semi-discretization of the NLSE \cref{NLSE} via the Lie-Trotter splitting is given as:
\begin{equation}\label{eq:semi_discretization}
	\psi^{[k+1]} = e^{i \tau \Delta} \Phi_B^\tau(\psi^{[k]}),
\end{equation}
with $ \psi^{[0]}(\vx) = \psi_0(\vx) $ for $\vx\in\overline{\Omega}$. 

Then we discretize \eqref{eq:semi_discretization} in space  by the sine pseudospectral method to obtain a full discretization for the NLSE \cref{NLSE}. For simplicity of notations,  here we only present the spatial discretization in 1D (taking $ \Omega = (a, b) $), and the generalization to higher dimensions is straightforward. Choose a mesh size $ h = (b-a)/N $ with $ N $ being a positive integer and denote grid points as
\begin{equation*}
	x_j = a+jh, \quad j = 0,1, \cdots, N.
\end{equation*}
Define the index sets
\begin{equation*}
	\mathcal{T}_N = \{1, 2, \cdots, N-1\}, \quad \mathcal{T}_N^0 = \{0, 1, \cdots, N\},
\end{equation*}
and denote
\begin{equation}
	\begin{aligned}
		X_N = \text{span}\left\{\sin(\mu_l(x-a)): l \in \mathcal{T}_N\right\}, \quad \mu_l = \frac{\pi l}{b-a},  \label{eq:varphi}\\
		Y_N = \left\{ v=(v_0, v_1, \cdots, v_N)^T \in \C^{N+1}: v_0 = v_N = 0 \right\}.
	\end{aligned}
\end{equation}
We define the $ l^p (1 \leq p \leq \infty) $ norm on $ Y_N $ as
\begin{align*}
	\| v \|_{l^p} = \left( h\sum_{j=0}^{N-1} |v_j|^p \right)^\frac{1}{p}, \quad 1 \leq p < \infty, \qquad
	\| v \|_{l^\infty} = \max_{0 \leq j \leq N-1} |v_j|, \quad v \in Y_N.
\end{align*}
We shall sometimes identify a function $ \phi(\cdot) \in C_0(\overline{\Omega}) $ with a vector $ \phi = (\phi_0, \phi_1,\cdots,$ $\phi_N)^T \in Y_N $ with $ \phi_j = \phi(x_j) $ and then the discrete norm $ \| \cdot \|_{l^p} $
can also be defined on $ X_N $. For $ v \in Y_N $, we define the forward finite difference operator as
\begin{equation}\label{eq:dxp_def}
	(\dxp v)_j = \dxp v_j = \frac{v_{j+1} - v_j}{h}, \quad 0 \leq j \leq N-1.
\end{equation}
Let $ P_N:L^2(\Omega) \rightarrow X_N $ be the standard $ L^2 $ projection onto $ X_N $ and $ I_N: Y_N  \rightarrow X_N $ be the standard sine interpolation operator as
\begin{equation}\label{eq:PN_IN_def}
\begin{aligned}
	&(P_N v)(x) = \sum_{l \in \mathcal{T}_N} \widehat v_l \sin(\mu_l(x-a)), \\
	&(I_N w)(x) = \sum_{l \in \mathcal{T}_N} \widetilde w_l \sin(\mu_l(x-a)),
\end{aligned}
\qquad x \in \overline{\Omega} = [a, b],
\end{equation}
where $ v \in L^2(\Omega) $, $ w \in Y_N $, and
\begin{equation}\label{phi_tilde} 
\begin{aligned}
	&\widehat{v}_l = \frac{2}{b-a} \int_a^b v(x) \sin(\mu_l(x-a)) \rmd x, \\
	&\widetilde{w}_l = \frac{2}{N} \sum_{j \in \mathcal{T}_N} w_j \sin(j \pi l / N),
\end{aligned}
\qquad l \in \mathcal{T}_N.
\end{equation}

Let $ \psi^k_j $ be the numerical approximations of $ \psi(x_j, t_k) $ for $ j \in \mathcal{T}_N^0 $ and $ k \geq 0 $, and denote $ \psi^k:=(\psi_0^k,\psi_1^k,\ldots,\psi_N^k)^T\in Y_N$. Then the time-splitting sine pseudospectral (TSSP) method for discretizing the NLSE \cref{NLSE} can be given for $ k \geq 0 $ as
\begin{equation}\label{full_discretization_scheme}
	\begin{aligned}
		&\psi^{(1)}_j = e^{- i \tau (V(x_j) + f(|\psi^k_j|^2))} \psi^k_j, \\
		&\psi^{k+1}_j = \sum_{l \in \mathcal{T}_N} e^{- i \tau \mu_l^2} \widetilde{(\psi^{(1)})}_l \sin(\mu_l(x_j-a)),
	\end{aligned}
	\qquad j \in \mathcal{T}_N^0, 
\end{equation}
where $ \psi^0_j = \psi_0(x_j) $ for $ j \in \mathcal{T}_N^0 $.

Let $ \Phi^\tau: X_N \rightarrow X_N $ be the numerical integrator defined as
\begin{equation}\label{eq:phitau_def}
	\Phi^\tau(\phi) = e^{i\tau\Delta} I_N \Phi_B^\tau (\phi), \quad \phi \in X_N,
\end{equation}
where $ \Phi_B^\tau $ is defined in \cref{eq:phi_B_def}. Then one has
\begin{equation}\label{recurrence_relation}
	\begin{aligned}
		&I_N \psi^{k+1} = \Phi^\tau(I_N \psi^k), \quad k \geq 0, \\
		&I_N \psi^0 = I_N \psi_0.
	\end{aligned}
\end{equation}

\begin{remark}
In applications, the NLSE \cref{NLSE} can also be discretized by the Lie-Trotter splitting via a different order as:
\begin{equation}\label{eq:semi_discretization-new}
	\psi^{[k+1]} =  \Phi_B^\tau(e^{i \tau \Delta}\psi^{[k]}), \qquad k\geq0.
\end{equation}
Then a full discretization can be obtained straightforward by using the sine pseudospectral method in space.
\end{remark}

\subsection{A local regularization for $ f(\rho) = \beta \rho^\sigma \ {(\beta \in \R)} $}

When $ 0 < \sigma < 1 $ in \eqref{semi-smooth}, $f(\rho)$ is a semi-smooth function and
it is not differentiable at $\rho=0$. {Here, we want to regularize it to obtain higher order local error estimates later. }
Following the regularization methods used in \cite{bao2022} for
the logarithmic {S}chr\"{o}dinger equation, we regularize the
semi-smooth nonlinearity $ f(\rho)$ only locally in a small region near $\rho=0$.
Taking $0<\varepsilon\ll 1$ as a regularization parameter, we approximate $f(\rho)$
locally in the region $ \{\rho < \vep^2 \} $ by a polynomial and leave it unchanged in $ \{\rho \geq \vep^2\} $, i.e.
\begin{equation}\label{eq:f_reg_def}
	\fe(\rho) =
	\left\{\begin{aligned}
		&f(\rho), &\rho \geq \vep^2 \\
		&\rho Q_\vep(\rho), &0\leq \rho < \vep^2,
	\end{aligned}
	\right.
\end{equation}
where $ Q_\vep(\rho) $ is a polynomial with degree at most $ 3 $ such that
\begin{equation}\label{eq:interpolation_condition}
	\fe \in C^{3}([0, \infty)).
\end{equation}
Note that $ \fe $ given by \cref{eq:f_reg_def} is uniquely determined by the interpolation conditions \cref{eq:interpolation_condition} and it satisfies $ \fe(0) = f(0) = 0 $. Actually, the explicit formula of $ Q_\vep(\rho) $ can be given as
\begin{equation}\label{Qe}
	Q_\vep(\rho) = \beta \vep^{2\sigma-2} \sum_{j=0}^3 \binom{j-\sigma}{j} \left( 1 - \frac{\rho}{\vep^2} \right)^j, \quad 0 \leq \rho < \vep^2.
\end{equation}

In fact, $ \fe \in C^{3}([0, \infty))$ can be regarded as a local regularization of the semi-smooth nonlinearity $f(\rho)\in C^{0}([0, \infty))$, which has much better regularity near $\rho=0$. For $f_\vep$, we have the following estimates. 
\begin{lemma}\label{lem:prop_fe}
	When $ 0 < \sigma < 1 $, we have
	\begin{equation}\label{f_1}
		|\fe(\rho)| + |\rho \fe^\prime(\rho)| \leq C_1 \rho^\sigma, \qquad \rho\ge0,
	\end{equation}
	\begin{equation}\label{f_2}
		|\sqrt{\rho} \fe^\prime(\rho)| + |\rho^\frac{3}{2} \fe^{\prime\prime}(\rho)| \leq C_2
		\left \{
		\begin{aligned}
			&\frac{1}{\vep^{1-2\sigma}}, && 0<\sigma \leq \frac{1}{2}, \\
			&{\rho^{\sigma-\frac{1}{2}}}, && \frac{1}{2}<\sigma<1 ,
		\end{aligned}
		\right., \quad \rho\ge0,
	\end{equation}
	and
	\begin{equation} \label{f_3}
		|\fe^\prime(\rho)| +|\rho \fe^{\prime\prime}(\rho)| + |\rho^2 \fe^{\prime\prime\prime}(\rho)| \leq \frac{C_3}{\vep^{2-2\sigma}},\quad \rho\ge0,
	\end{equation}
	where $ C_1 $, $ C_2 $ and $ C_3 $ depend exclusively on $ \sigma $ and $ \beta $.
\end{lemma}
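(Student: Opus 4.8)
The plan is to verify each of \cref{f_1,f_2,f_3} separately on the two pieces of the definition \cref{eq:f_reg_def}, namely the outer region $\{\rho\ge\vep^2\}$ and the inner region $\{0\le\rho<\vep^2\}$, and then take the larger of the two constants; since all the inequalities are pointwise, nothing needs to be matched across $\rho=\vep^2$.

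On $\{\rho\ge\vep^2\}$, where $\fe(\rho)=\beta\rho^\sigma$, I would simply differentiate, so that $\fe^{(m)}(\rho)$ is a constant multiple of $\rho^{\sigma-m}$ for $m=1,2,3$, the constants depending only on $\sigma,\beta$. Each term on the left of \cref{f_1,f_2,f_3} then reduces to a constant times a single power $\rho^\gamma$: the exponent is $\sigma$ in \cref{f_1}, it is $\sigma-\tfrac12$ for both terms in \cref{f_2}, and it is $\sigma-1$ for all three terms in \cref{f_3}. The bound in \cref{f_1} is immediate. In \cref{f_3} the exponent is negative, so $\rho^{\sigma-1}$ is decreasing and maximal at $\rho=\vep^2$, giving $\rho^{\sigma-1}\le\vep^{2\sigma-2}=\vep^{-(2-2\sigma)}$. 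For \cref{f_2} I split: when $0<\sigma\le\tfrac12$ the exponent $\sigma-\tfrac12$ is nonpositive and the same monotonicity yields $\rho^{\sigma-\frac12}\le\vep^{2\sigma-1}=\vep^{-(1-2\sigma)}$, whereas for $\tfrac12<\sigma<1$ the term $\rho^{\sigma-\frac12}$ is already in the required form.

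The substance of the argument is the inner region, where I would exploit the scaling structure of \cref{Qe}. Setting $s=\rho/\vep^2\in[0,1)$, one has $Q_\vep(\rho)=\beta\vep^{2\sigma-2}P(s)$ with $P(s)=\sum_{j=0}^3\binom{j-\sigma}{j}(1-s)^j$ a fixed cubic polynomial whose coefficients depend only on $\sigma$; hence $\fe(\rho)=\rho Q_\vep(\rho)=\beta\vep^{2\sigma}g(s)$, where $g(s):=sP(s)$. Differentiating in $\rho$ produces a factor $\vep^{-2}$ each time, so $\fe^{(m)}(\rho)=\beta\vep^{2\sigma-2m}g^{(m)}(s)$ for $m=1,2,3$. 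Substituting $\rho^a=\vep^{2a}s^a$ into every left-hand term then factors out a clean power of $\vep$ and leaves a quantity of the form $s^a g^{(m)}(s)$ on $[0,1]$. Since $g$ is a polynomial, $s^a g^{(m)}(s)$ is bounded on $[0,1]$ for every $a\ge0$ by a constant depending only on $\sigma,\beta$; this disposes of \cref{f_3} at once, since all resulting $\vep$-powers collapse to $\vep^{2\sigma-2}$, and of the $0<\sigma\le\tfrac12$ branches of \cref{f_1,f_2}, where the target is a pure power of $\vep$.

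The only place needing slight care is the two targets that are powers of $\rho$ rather than of $\vep$: all of \cref{f_1} and the $\tfrac12<\sigma<1$ branch of \cref{f_2}. Here I would use the elementary inequality $s^b\le s^a$ for $s\in[0,1]$ whenever $0\le a\le b$. In \cref{f_1} the two inner terms equal $|\beta|\vep^{2\sigma}$ times $|g(s)|$ or $|sg'(s)|$, each of which carries an explicit factor $s$ because $g(s)=sP(s)$; bounding the remaining polynomial by a constant, using $s\le s^\sigma$ on $[0,1]$, and recalling $\vep^{2\sigma}s^\sigma=\rho^\sigma$ gives the bound $C\rho^\sigma$. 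For the $\tfrac12<\sigma<1$ branch of \cref{f_2}, the inner terms are $|\beta|\vep^{2\sigma-1}$ times $|s^{1/2}g'(s)|$ and $|s^{3/2}g''(s)|$; writing $s^{1/2}g'(s)=s^{\sigma-\frac12}\,s^{1-\sigma}g'(s)$ and $s^{3/2}g''(s)=s^{\sigma-\frac12}\,s^{2-\sigma}g''(s)$ and noting that $s^{1-\sigma}$ and $s^{2-\sigma}$ are bounded on $[0,1]$ since $\sigma<1$, while $\vep^{2\sigma-1}s^{\sigma-\frac12}=\rho^{\sigma-\frac12}$, yields the required $C\rho^{\sigma-\frac12}$. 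I do not expect a genuine obstacle: the lemma is essentially a bookkeeping computation, and the only care needed is to organize the case split on $\sigma$, confirm the exponent comparisons, and check that the constants emerging from the polynomial bounds depend on $\sigma,\beta$ alone and not on $\vep$.
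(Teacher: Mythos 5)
Your proposal is correct and takes essentially the same route as the paper: the same split into the outer region $\{\rho\ge\vep^2\}$ (handled by direct differentiation of $\beta\rho^\sigma$ and monotonicity of the resulting powers) and the inner region $\{0\le\rho<\vep^2\}$, where your uniform bounds on $g^{(m)}$ over $[0,1]$ are precisely the paper's estimate $|Q_\vep^{(k)}(\rho)|\lesssim\vep^{2\sigma-2-2k}$ in rescaled form, and your inequality $s^b\le s^a$ on $[0,1]$ is exactly the paper's trick $\rho\,\vep^{2\sigma-2}=\rho^\sigma(\rho/\vep^2)^{1-\sigma}\le\rho^\sigma$. The substitution $s=\rho/\vep^2$ is merely a cleaner packaging of the same bookkeeping, with all exponent comparisons and the $\sigma$-case split matching the paper's proof of \cref{lem:prop_fe}.
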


\begin{proof}
	When $ \rho \geq \vep^2 $, by \cref{eq:f_reg_def}, we have $ f^{(k)}(\rho) = \fe^{(k)}(\rho) $ for $ 0 \leq k \leq 3 $, and \cref{f_1,f_2,f_3} follows immediately from $ f(\rho) = \beta \rho^\sigma $ and $ 0<\vep<1 $.
	
	In the following, we assume that $ 0 \leq \rho < \vep^2 $. From \cref{Qe}, we easily obtain that
	\begin{equation}\label{Qe_bound}
		|Q^{(k)}_\vep(\rho)| \lesssim \vep^{2\sigma - 2-2k}, \quad 0 \leq \rho < \vep^2, \quad 0 \leq k \leq 3.
	\end{equation}
	From \cref{eq:f_reg_def}, using \cref{Qe_bound}, one gets
	\begin{equation}
		|\fe(\rho)| \leq \rho | Q_\vep(\rho)| \lesssim \rho\vep^{2\sigma - 2} = \rho^\sigma \left(\frac{\rho}{\vep^2}\right)^{1-\sigma} \leq \rho^\sigma.
	\end{equation}
	Similarly, one has
	\begin{equation}
		|\rho \fe^\prime(\rho)| \leq |\rho^2 Q_\vep\p(\rho)| + |\rho Q_\vep(\rho)| \lesssim \left (\frac{\rho}{\vep^2}+1 \right  ) \rho \vep^{2\sigma-2} \leq 2\rho^\sigma,
	\end{equation}
	which proves \cref{f_1}.
	
	Recalling \cref{eq:f_reg_def}, using \cref{Qe_bound}, one gets, {when $ 0<\sigma<1 $,
	\begin{subequations}
		\begin{equation}\label{2.26a}
			|\sqrt{\rho} \fe\p(\rho)| \leq \rho^\frac{1}{2} \left( |Q_\vep(\rho)| + \rho |{Q_\vep}\p(\rho)|  \right) \lesssim \vep \left( \vep^{2\sigma-2} + \vep^2 \vep^{2\sigma-4} \right) \lesssim \vep^{2\sigma - 1},
		\end{equation}
		\begin{align}\label{2.26b}
			|\sqrt{\rho} \fe\p(\rho)| 
			\leq \rho^\frac{1}{2} \left( |Q_\vep(\rho)| + \rho |{Q_\vep}\p(\rho)|  \right)
			&\lesssim \rho^\frac{1}{2} \vep^{2\sigma - 2} \notag \\
			&= \rho^{\sigma - \frac{1}{2}} \left(\frac{\rho}{\vep^2}\right)^{1-\sigma} 
			\leq \rho^{\sigma - \frac{1}{2}}.
		\end{align}
	\end{subequations}
	Using \cref{2.26a} when $0<\sigma\leq 1/2$ and using \cref{2.26b} when $1/2<\sigma<1$, we obtain the desired estimate for $|\sqrt{\rho} \fe\p(\rho)|$.} The estimate of $ |\rho^\frac{3}{2} \fe^{\prime\prime}(\rho)| $ can be obtained similarly, which completes the proof of \cref{f_2}.
	
	For \cref{f_3}, using \cref{Qe_bound} again, one has
	\begin{equation}
		|\fe^\prime(\rho)| \leq |Q_\vep(\rho)| + \rho|Q_\vep\p(\rho)| \lesssim \vep^{2\sigma - 2} + \vep^2 \vep^{2\sigma - 4} \lesssim \vep^{2\sigma-2}.
	\end{equation}
	The estimate of $ |\rho \fe^{\prime\prime}(\rho)| $ and $ |\rho^2 \fe^{\prime\prime\prime}(\rho)| $ can be obtained similarly, which completes the proof of \cref{f_3}.
\end{proof}

\begin{corollary}\label{lem:Be}
	When $ 0 < \sigma \leq 1/2 $, we have
	\begin{align}
		&\| \fe(|v|^2)v \|_{L^2} \leq C_1(\| v \|_{L^\infty}) \| v \|_{L^2}, \quad v \in L^\infty(\Omega), \label{eq:Be_L2}\\
		&\| \fe(|v|^2)v \|_{H^1} \leq C_2(\| v \|_{L^\infty}) \| v \|_{H^1}, \quad v \in H^1(\Omega) \cap L^\infty(\Omega), \label{eq:Be_H1}\\
		&\| \fe(|v|^2)v \|_{H^2} \leq \frac{C_3\left(\|  v \|_{H^2}\right)}{\vep^{1-2\sigma}}, \quad v \in H^2(\Omega).  \label{eq:Be_H2}
	\end{align}
	When $ 0 < \sigma <1 $, we have
	\begin{equation}\label{eq:Be_H3}
		\| \fe(|v|^2)v \|_{H^3} \leq \frac{C_4\left(\|  v \|_{H^3}\right)}{\vep^{2-2\sigma}}, \quad v \in H^3(\Omega).
	\end{equation}
\end{corollary}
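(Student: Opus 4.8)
The plan is to reduce all four estimates to the pointwise bounds \cref{f_1,f_2,f_3} of \cref{lem:prop_fe} by expanding the spatial derivatives of $g := \fe(|v|^2)v$ via the Leibniz and chain rules, and then controlling the resulting products by H\"older's inequality together with the Sobolev embeddings available for $d\le 3$. Writing $\rho = |v|^2$, the $L^2$ and $H^1$ bounds are immediate: by \cref{f_1} one has $|\fe(\rho)|\le C_1\rho^\sigma$ and $|\rho\fe\p(\rho)|\le C_1\rho^\sigma$, so $|g|\le C_1|v|^{2\sigma}|v|$ gives \cref{eq:Be_L2} after pulling out $\|v\|_{L^\infty}^{2\sigma}$, while differentiating $g$ once and using $|\nabla\rho|\le 2|v||\nabla v|$ together with $|v|^2|\fe\p(\rho)| = |\rho\fe\p(\rho)|\le C_1\rho^\sigma$ yields $|\nabla g|\lesssim |v|^{2\sigma}|\nabla v|$ and hence \cref{eq:Be_H1}. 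Both only use \cref{f_1}, so they in fact hold for all $0<\sigma<1$.

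The core of the proof is the bookkeeping for the higher derivatives. The key observation I would exploit is that every differentiation of $\rho = v\bar v$ produces a factor carrying at least one power of $|v| = \rho^{1/2}$ whenever it falls on $v$ or $\bar v$ directly, e.g.\ $|\nabla\rho|\le 2|v||\nabla v|$ and $|\nabla^2\rho|\le 2|\nabla v|^2 + 2|v||\nabla^2 v|$. Thus every term in $\nabla^k g$ is (schematically) a product of some $\fe^{(j)}(\rho)$ with a monomial in $|v|$ and pure derivatives $|\nabla^m v|$; the strategy is to absorb exactly as many factors $\rho^{1/2}$ into $\fe^{(j)}(\rho)$ as are available, matching one of the three regimes of \cref{lem:prop_fe}: $\fe$ with no extra power (giving $\rho^\sigma$ via \cref{f_1}), $\fe\p$ paired with $\rho$, $\rho^{1/2}$ or $\rho^0$ (giving $\rho^\sigma$, $\vep^{-(1-2\sigma)}$ or $\vep^{-(2-2\sigma)}$ via \cref{f_1,f_2,f_3}), $\fe\pp$ paired with $\rho^{3/2}$ or $\rho$ (via \cref{f_2,f_3}), and $\fe^{\prime\prime\prime}$ paired with $\rho^2$ (via \cref{f_3}). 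The leftover pure derivatives $|\nabla^m v|$ are then estimated in $L^4$ or $L^6$.

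For \cref{eq:Be_H2} I would expand, schematically, $\nabla^2 g = \fe\pp(\rho)(\nabla\rho)^2 v + \fe\p(\rho)(\nabla^2\rho)v + 2\fe\p(\rho)\nabla\rho\,\nabla v + \fe(\rho)\nabla^2 v$. The dominant contribution is $\fe\pp(\rho)(\nabla\rho)^2 v$, which carries $|v|^3 = \rho^{3/2}$ and is therefore controlled by $|\rho^{3/2}\fe\pp(\rho)||\nabla v|^2 \le C_2\vep^{-(1-2\sigma)}|\nabla v|^2$ from \cref{f_2} in the range $0<\sigma\le\frac12$; the remaining terms give at worst $\vep^{-(1-2\sigma)}|\nabla v|^2$ (via $|\sqrt\rho\,\fe\p|$) and $|v|^{2\sigma}|\nabla^2 v|$ (via $|\rho\fe\p|$ and $|\fe|$). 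Taking $L^2$ norms, $\||\nabla v|^2\|_{L^2} = \|\nabla v\|_{L^4}^2 \lesssim \|v\|_{H^2}^2$ since $H^1\hookrightarrow L^4$ for $d\le 3$, and $\||v|^{2\sigma}\nabla^2 v\|_{L^2}\le\|v\|_{L^\infty}^{2\sigma}\|v\|_{H^2}\lesssim\|v\|_{H^2}^{1+2\sigma}$ since $H^2\hookrightarrow L^\infty$; absorbing the lower powers of $1/\vep$ into $\vep^{-(1-2\sigma)}$ (using $0<\vep<1$) gives \cref{eq:Be_H2}.

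For \cref{eq:Be_H3} the same expansion carried to third order produces terms such as $\fe^{\prime\prime\prime}(\rho)(\nabla\rho)^3 v$, $\fe\pp(\rho)(\nabla\rho)^2\nabla v$, $\fe\p(\rho)(\nabla^2\rho)\nabla v$ and $\fe(\rho)\nabla^3 v$. The worst terms are those which, after using $|\nabla^2\rho|\le 2|\nabla v|^2 + \cdots$, leave $\fe\p(\rho)$, $\rho\fe\pp(\rho)$ or $\rho^2\fe^{\prime\prime\prime}(\rho)$ with no spare factor of $\rho^{1/2}$; these are exactly the three quantities bounded by $C_3\vep^{-(2-2\sigma)}$ in \cref{f_3}, which forces the stated $\vep^{-(2-2\sigma)}$. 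The leftover derivative factors are handled by $\||\nabla v|^3\|_{L^2}=\|\nabla v\|_{L^6}^3\lesssim\|v\|_{H^3}^3$ and $\||\nabla v||\nabla^2 v|\|_{L^2}\le\|\nabla v\|_{L^4}\|\nabla^2 v\|_{L^4}\lesssim\|v\|_{H^3}^2$, both valid because $H^1\hookrightarrow L^6$ and $H^1\hookrightarrow L^4$ for $d\le 3$, while $\fe(\rho)\nabla^3 v$ contributes $\|v\|_{L^\infty}^{2\sigma}\|v\|_{H^3}$ with no $\vep$-singularity. The main obstacle is precisely this combinatorial bookkeeping: one must check for every term of $\nabla^3 g$ that the number of $\rho^{1/2}$-factors supplied by the explicit $v$ and by the $\rho$-derivatives is large enough to reach one of the regimes of \cref{lem:prop_fe}, and that the surplus pure derivatives land in an $L^p$ space ($p\in\{4,6\}$) controlled by $H^3$ in dimensions $d\le 3$; once this pairing is verified term by term, the estimate follows from H\"older's inequality and $0<\vep<1$.
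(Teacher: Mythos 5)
Your proof is correct and takes essentially the same route as the paper's: expand the derivatives of $\fe(|v|^2)v$ by the Leibniz and chain rules, pair each $\fe^{(j)}(\rho)$ with the available powers of $\rho^{1/2}$ (supplied by the explicit $v$ and by $|\nabla\rho|\le 2|v||\nabla v|$) so that \cref{f_1,f_2,f_3} apply, and close with H\"older's inequality and the embeddings $H^1\hookrightarrow L^4, L^6$ (equivalently $H^2\hookrightarrow W^{1,4}$), valid for $d\le 3$. The only difference is cosmetic: the paper writes out only the second-order expansion \cref{parparBe} and omits the $H^3$ bookkeeping ``for brevity,'' whereas you carry out that bookkeeping explicitly, correctly identifying $\fe\p$, $\rho\fe\pp$ and $\rho^2\fe^{\prime\prime\prime}$ as the terms forcing the $\vep^{-(2-2\sigma)}$ singularity via \cref{f_3}; your side remark that \cref{eq:Be_L2,eq:Be_H1} use only \cref{f_1} and hence hold for all $0<\sigma<1$ is also accurate.
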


\begin{proof}
	By \cref{f_1}, one has
	\begin{equation}
		\| \fe(|v|^2)v \|_{L^2} \leq \| \fe(|v|^2) \|_{L^\infty} \| v \|_{L^2} \lesssim \| v \|_{L^\infty}^{2\sigma} \| v \|_{L^2},
	\end{equation}
	which proves \cref{eq:Be_L2}.
	
	By direct calculation, using \cref{f_1}, one gets
	\begin{align}
		\left \| \nabla \left (\fe(|v|^2)v \right ) \right \|_{L^2}
		&= \left \| \fe(|v|^2) \nabla v + \fe\p(|v|^2) v ( v\nabla \overline{v} + \overline{v} \nabla v) \right \|_{L^2}\notag \\
		&\leq \left (\| \fe(|v|^2) \|_{L^\infty} + \| \fe\p(|v|^2)v^2 \|_{L^\infty} + \| \fe\p(|v|^2)|v|^2 \|_{L^\infty}\right ) \| \nabla v \|_{L^2} \notag\\
		&\lesssim \| v \|_{L^\infty}^{2\sigma} \| v \|_{H^1},
	\end{align}
	which shows \cref{eq:Be_H1}.
	
	To show \cref{eq:Be_H2}, we note that
	\begin{align}\label{parparBe}
		\partial_{jk}(\fe(|v|^2) v)
		&=\partial_j \left [(\fe(|v|^2) + \fe\p(|v|^2)|v|^2) \partial_k v + \fe\p(|v|^2)v^2 \partial_k \overline{v} \right ]\notag \\
		&=(2\fe\p(|v|^2) + \fe\pp(|v|^2)|v|^2) \partial_j |v|^2 \partial_k v + (\fe(|v|^2) + \fe\p(|v|^2)|v|^2) \partial_{jk} v \notag\\
		&\quad + \fe\pp(|v|^2) v^2 \partial_j |v|^2 \partial_k \overline{v} + 2 \fe\p(|v|^2)v \partial_j v \partial_k \overline{v} + \fe\p(|v|^2)v^2 \partial_{jk} \overline{v},
	\end{align}
	where $ \partial_j = \partial_{x_j} $ and $ \partial_{jk} = \partial_{x_j} \partial_{x_k} $ for $ 1 \leq j, k \leq d $. Here we adopt the notations
$\vx=x_1$ (or $x$) when $d=1$, $\vx=(x_1,x_2)^T$ (or $(x,y)^T$) when $d=2$,
and $\vx=(x_1,x_2,x_3)^T$ (or $(x,y,z)^T$) when $d=3$.
From Lemma \ref{lem:prop_fe}, using \cref{f_1,f_2} and noting that $ |\partial_j |v|^2| \leq 2 |v|\, |\partial_j v|$, one gets
	\begin{align}\label{parparBe_bound}
		\left | \partial_{jk}(\fe(|v|^2) v) \right |
		&\lesssim \left (\fe\p(|v|^2)|v| + \fe\pp(|v|^2)|v|^3 \right ) |\partial_j v|\, |\partial_k v| \notag\\
		&\quad + \left ( \fe(|v|^2) + \fe\p(|v|^2)|v|^2 \right ) |\partial_{jk} v| \notag\\
		&\lesssim \frac{|\partial_j v|\, |\partial_k v|}{\vep^{1-2\sigma}} + |v|^{2\sigma} |\partial_{jk} v|,
	\end{align}
	which, by using H\"older's inequality and Sobolev embedding {$H^2 \hookrightarrow W^{1,4}$ which holds for $d=1, 2, 3$}, yields
	\begin{equation}\label{parparBe_norm}
		\| \partial_{jk}(\fe(|v|^2) v) \|_{L^2} \lesssim \frac{\|\partial_j v\|_{L^4} \|\partial_k v\|_{L^4}}{\vep^{1-2\sigma}} + \|v\|_{L^\infty}^{2\sigma} \|\partial_{jk} v \|_{L^2} \leq \frac{C(\| v \|_{H^2})}{\vep^{1-2\sigma}},
	\end{equation}
	which implies \cref{eq:Be_H2}.
	
	Following \cref{lem:prop_fe}, noting \eqref{parparBe_bound} and
\eqref{parparBe_norm} and using \cref{f_3}, we can similarly obtain \cref{eq:Be_H3} and the details are omitted here for brevity.
\end{proof}

\begin{lemma}\label{eq:f-fe1}
	When $ 0< \sigma < 1 $, we have
	\begin{equation*}
		| f(\rho) - \fe(\rho) | \leq C \vep^{2 \sigma} \mathbbm{1}_{\rho < \vep^2}, \quad \rho \geq 0.
	\end{equation*}
\end{lemma}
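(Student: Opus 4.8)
The plan is to exploit the fact that $f$ and $\fe$ coincide on $\{\rho \geq \vep^2\}$ by construction, so that the difference is supported exactly on $\{\rho < \vep^2\}$, which is precisely what the indicator on the right-hand side records. First I would dispose of the range $\rho \geq \vep^2$: by \cref{eq:f_reg_def} we have $\fe(\rho) = f(\rho)$ there, hence $f(\rho) - \fe(\rho) = 0$ and the asserted inequality holds trivially with both sides vanishing. This also confirms that $\mathbbm{1}_{\rho < \vep^2}$ is the correct support factor.

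It then remains to treat $0 \leq \rho < \vep^2$, where I would simply apply the triangle inequality $|f(\rho) - \fe(\rho)| \leq |f(\rho)| + |\fe(\rho)|$ and bound each term by a constant multiple of $\vep^{2\sigma}$. For the first term, since $f(\rho) = \beta \rho^\sigma$ with $\sigma > 0$, the map $\rho \mapsto \rho^\sigma$ is increasing, so $|f(\rho)| = |\beta|\, \rho^\sigma \leq |\beta|\, \vep^{2\sigma}$ on this range. For the second term I would reuse the estimate already derived in the proof of \cref{lem:prop_fe}, namely $|\fe(\rho)| \leq \rho\, |Q_\vep(\rho)| \lesssim \rho\, \vep^{2\sigma - 2} \leq \vep^{2\sigma}$, where the middle step invokes \cref{Qe_bound} and the last uses $\rho < \vep^2$. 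Combining the two bounds gives $|f(\rho) - \fe(\rho)| \lesssim \vep^{2\sigma}$ on $\{\rho < \vep^2\}$, which together with the vanishing on the complementary region yields the claim.

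I do not anticipate any genuine obstacle here: the estimate is essentially immediate once one observes that the difference is confined to the regularization region and that both $f$ and $\fe$ are individually of size $O(\vep^{2\sigma})$ there. The only points requiring mild care are tracking that the constant $C$ depends only on $\sigma$ and $\beta$ (inherited from \cref{Qe_bound}), and noting that the exact cancellation outside $\{\rho < \vep^2\}$ follows directly from the interpolatory construction \cref{eq:f_reg_def} rather than from any cruder bound.
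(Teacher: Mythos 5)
Your proposal is correct and matches the paper's proof essentially verbatim: the paper also notes $|f(\rho)-\fe(\rho)|=0$ for $\rho \geq \vep^2$ by \cref{eq:f_reg_def}, and for $0 \leq \rho < \vep^2$ applies the triangle inequality with the bound $|f(\rho)|+|\fe(\rho)| \lesssim \rho^\sigma \leq \vep^{2\sigma}$, citing \cref{f_1} where you instead unwind \cref{Qe_bound} directly --- the same estimate in substance.
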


\begin{proof}
	Recalling \cref{eq:f_reg_def}, we have
	\begin{equation}
		| f(\rho) - \fe(\rho) | = 0, \quad \rho \geq \vep^2,
	\end{equation}
	and, by \cref{semi-smooth,f_1},
	\begin{equation}
		| f(\rho) - \fe(\rho) | \leq | f(\rho) | + | \fe(\rho) | \lesssim \rho^\sigma \leq \vep^{2\sigma}, \quad 0 \leq \rho < \vep^2,
	\end{equation}
	which completes the proof.
\end{proof}

\subsection{Main results}
Let $ T_\text{max} $ be the maximal existing time for the solution of the NLSE
\eqref{NLSE} with \eqref{init} and \eqref{bound} and take $ 0 < T < T_\text{max} $
be a fixed time. Based on the known existence and regularity results ({see Remark 4.8.7 (iii) in \cite{cazenave2003} or Theorem II in \cite{kato1987}}) for the solution of \eqref{NLSE}, we make the assumption that the solution $\psi$ satisfies $ \psi \in C([0, T]; H_0^1(\Omega) \cap H^2(\Omega)) \cap C^1([0, T]; L^2(\Omega)) $ such that
\begin{equation}\label{A}
\| \psi \|_{L^\infty([0, T]; H^2)} + \| \partial_t \psi \|_{L^\infty([0, T]; L^2)} \lesssim 1. \tag{A}
\end{equation}
{Note that the solution to \cref{NLSE} that satisfies \cref{A} must be unique \cite{henning2017}.}

Define
\begin{equation}\label{M2}
	M_2 := \max\left\{ \| \psi \|_{L^\infty([0, T]; H^2)}, \| \psi \|_{L^\infty([0, T]; L^\infty)}, \| V \|_{H^2} \right\},
\end{equation}
and assume the following time step size restriction ($h<1$)
\begin{equation}\label{cfl}
\tau \lesssim \left\{
\begin{aligned}
&1, &\qquad d=1,\\
&\frac{1}{|\ln h|^2}, &\qquad d = 2, \\
&h, \quad &\qquad  d=3.
\end{aligned}
\right. \tag{B}
\end{equation}
For the TSSP method \cref{full_discretization_scheme}, we can establish the following error estimates.

\begin{theorem}\label{thm:sigma<1/2}
	When $ 0 < \sigma \leq 1/2 $, under the assumptions $V \in H^2(\Omega)$ and \cref{A}, for $ 0 < \tau < 1 $ and $ 0<h<1 $, we have
	\begin{equation}\label{eq:thm1}
\| \psi(\cdot,t_k)- I_N \psi^k \|_{L^2} \lesssim \tau^{1/2+\sigma} + h^{1+2\sigma}, \quad 0 \leq k \leq \frac{T}{\tau}.
	\end{equation}

\end{theorem}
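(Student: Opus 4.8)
The plan is to run a Lady Windermere's fan (telescoping) argument in $L^2$ and to control the error propagation by an \emph{unconditional} $L^2$-stability of the numerical flow $\Phi^\tau$ in \cref{eq:phitau_def}, one that uses only the $L^\infty$-bound of the \emph{exact} solution (available from \cref{A} and the embedding $H^2\hookrightarrow L^\infty$ in $d\le3$) and requires \emph{no} a priori control of $I_N\psi^k$. Writing $e^k=\psi(\cdot,t_k)-I_N\psi^k$ and using \cref{recurrence_relation}, I would split
\[
e^{k+1}=\bigl[\psi(\cdot,t_{k+1})-\Phi^\tau(\psi(\cdot,t_k))\bigr]+\bigl[\Phi^\tau(\psi(\cdot,t_k))-\Phi^\tau(I_N\psi^k)\bigr],
\]
the first bracket being the local truncation error $\mathcal E^k$ and the second the stability term (here $\Phi^\tau$ extends to $C(\overline\Omega)$ since it only reads off grid values). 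Because $\Phi^\tau=e^{i\tau\Delta}I_N\Phi_B^\tau$, with $e^{i\tau\Delta}$ an $L^2$-isometry and $\|I_Nw\|_{L^2}=\|w\|_{l^2}$ on grid functions, the stability term collapses to the grid values of the pointwise phase map $z\mapsto z\,e^{-i\tau(V+f(|z|^2))}$, and it only ever sees $a_j:=\psi(x_j,t_k)$ and $b_j:=\psi^k_j$.

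The heart of the proof is this stability estimate. With $\delta_j=f(|a_j|^2)-f(|b_j|^2)$ and $e_j=a_j-b_j$, expanding the squared norm and using $|\mathrm{Im}(a_j\overline{b_j})|=|\mathrm{Im}(a_j\overline{e_j})|\le|a_j|\,|e_j|$ gives
\[
\bigl\|\Phi^\tau(\psi(\cdot,t_k))-\Phi^\tau(I_N\psi^k)\bigr\|_{L^2}^2\le\|e^k\|_{l^2}^2+2\tau\sum_j h\,|\delta_j|\,|a_j|\,|e_j|+O\Bigl(\tau^2\sum_j h\,\delta_j^2|a_j||b_j|\Bigr).
\]
The naive Hölder estimate $|\delta_j|\lesssim|e_j|^{2\sigma}$ is fatal, since it leaves a drift $\tau\|e^k\|^{1+2\sigma}$ that becomes the sublinear $\tau\|e^k\|^{2\sigma}$ in the Gronwall step and cannot be absorbed. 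The decisive point is the pointwise bound
\[
|a_j|\,\bigl||a_j|^{2\sigma}-|b_j|^{2\sigma}\bigr|\lesssim M_2^{2\sigma}|e_j|+|e_j|^{1+2\sigma},
\]
valid for all $a_j,b_j$: the compensating factor $|a_j|$ coming from $\mathrm{Im}(a_j\overline{e_j})$ exactly cancels the singularity of $\rho^\sigma$ at $\rho=0$, via the mean value theorem when $|a_j|\sim|b_j|$ and trivially (since then $|e_j|\gtrsim|a_j|,|b_j|$) otherwise. Hence the correction is $\lesssim\tau M_2^{2\sigma}\|e^k\|_{l^2}^2$ up to higher-order terms, using \emph{only} the bound $M_2$ from \cref{M2} and \emph{no} bound on the numerical solution. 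This is the unconditional $L^2$-stability $\|\Phi^\tau(\psi(\cdot,t_k))-\Phi^\tau(I_N\psi^k)\|_{L^2}\le(1+C\tau)\|e^k\|_{l^2}+(\mathrm{h.o.t.})$, and it is precisely what removes the CFL restriction for $0<\sigma\le1/2$.

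For the local error $\mathcal E^k$ I would use the regularization $f_\varepsilon$ to recover smoothness. Inserting the regularized numerical flow $\Phi_\varepsilon^\tau$ (identical to $\Phi^\tau$ but with $f_\varepsilon$), I split $\mathcal E^k$ into (i) the regularization defect $\Phi_\varepsilon^\tau(\psi(\cdot,t_k))-\Phi^\tau(\psi(\cdot,t_k))$ together with the analogous defect between the exact $f$- and $f_\varepsilon$-flows: by \cref{eq:f-fe1} these are supported where $|\psi|<\varepsilon$ and are bounded in $L^2$ by $\tau\,\varepsilon\cdot\varepsilon^{2\sigma}=\tau\varepsilon^{1+2\sigma}$ (the exact-flow defect via Duhamel); and (ii) the one-step error of the \emph{smooth} $f_\varepsilon$-scheme against the exact $f_\varepsilon$-flow. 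Since $f_\varepsilon\in C^3$, part (ii) is a standard Lie-splitting/interpolation consistency estimate whose commutator and $I_N$-aliasing constants are governed by the $H^2$-bound $\|f_\varepsilon(|\psi|^2)\psi\|_{H^2}\lesssim\varepsilon^{-(1-2\sigma)}$ from \cref{eq:Be_H2}, yielding $\lesssim(\tau^2+\tau h^2)\varepsilon^{-(1-2\sigma)}$. Altogether $\|\mathcal E^k\|_{L^2}\lesssim\tau\varepsilon^{1+2\sigma}+(\tau^2+\tau h^2)\varepsilon^{-(1-2\sigma)}$.

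Finally I would feed these into the now-linear discrete Gronwall inequality from the stability step, together with the interpolation error of the initial data and the relation $\|e^k\|_{l^2}\le\|e^k\|_{L^2}+\|\psi(\cdot,t_k)-I_N\psi(\cdot,t_k)\|_{L^2}$. Summing $k\lesssim T/\tau$ local errors divides out one power of $\tau$ and leaves $\varepsilon^{1+2\sigma}+(\tau+h^2)\varepsilon^{-(1-2\sigma)}$; choosing $\varepsilon\sim(\tau+h^2)^{1/2}$ balances the two pieces and produces $(\tau+h^2)^{(1+2\sigma)/2}\lesssim\tau^{1/2+\sigma}+h^{1+2\sigma}$, which is \cref{eq:thm1}. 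The main obstacle is exactly the stability step: because $f(\rho)=\beta\rho^\sigma$ is only Hölder (not Lipschitz) for $\sigma<1$, any crude treatment of the phase mismatch wrecks the Gronwall, and everything hinges on exposing the compensating $|a_j|$-weight so that the correction is genuinely quadratic in $e^k$ with an $M_2$-controlled constant and free of any numerical $L^\infty$-bound; a secondary delicate point is matching the regularization exponents in (ii) so that the single choice $\varepsilon\sim\sqrt{\tau+h^2}$ delivers both the $\tau^{1/2+\sigma}$ and $h^{1+2\sigma}$ rates simultaneously.
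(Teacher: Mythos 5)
Your proposal follows the paper's strategy in both of its key ingredients: an unconditional $L^2$-stability bound that only consumes the $L^\infty$-bound $M_2$ of the exact solution (so no a priori control of $I_N\psi^k$ and no CFL condition), and a local truncation error obtained through the local regularization $\fe$ with the balance $\inf_{0<\vep<1}\bigl(\vep^{1+2\sigma}+(\tau+h^2)\vep^{2\sigma-1}\bigr)$, which is exactly how \cref{lem:fractional_error_1} and \cref{prop:local_error_sigma<1/2} produce $\tau^{1/2+\sigma}+h^{1+2\sigma}$. The differences are in execution rather than substance: the paper compares $I_N\psi^k$ with $P_N\psi(\cdot,t_k)$ rather than with $\psi(\cdot,t_k)$ itself (cosmetic, since the projection error is $O(h^2)\lesssim h^{1+2\sigma}$ for $\sigma\leq 1/2$), and it keeps the original $f$ in the scheme and in the Duhamel expansion, invoking $\fe$ only inside the two operator estimates $\|(I-e^{i\tau\Delta})P_N B(\phi)\|_{L^2}$ and $\|(I_N-P_N)B(\phi)\|_{L^2}$; your arrangement---regularizing both the scheme and the exact flow and comparing four objects---also works but forces you to additionally prove an $L^2$-stability estimate between the exact $f$- and $\fe$-flows, which the paper's placement of the regularization avoids. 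Your single global choice $\vep\sim(\tau+h^2)^{1/2}$ versus the paper's separate per-estimate optimization ($\vep\sim\sqrt{\tau}$ and $\vep\sim h$) is equivalent.

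The one genuine loose end is in your stability step. Expanding the squared norm and using $|\mathrm{Im}(a_j\overline{b_j})|\leq|a_j||e_j|$ does expose the compensating modulus in the \emph{first-order} cross term, and your pointwise bound $|a_j|\,\bigl||a_j|^{2\sigma}-|b_j|^{2\sigma}\bigr|\lesssim M_2^{2\sigma}|e_j|$ (the $|e_j|^{1+2\sigma}$ addend is actually superfluous once $|a_j|\leq M_2$) is correct. But the remainder you dismiss as higher order, $\tau^2\sum_j h\,\delta_j^2|a_j||b_j|$, is not controlled by $\tau^2\|e^k\|_{l^2}^2$ with an $M_2$-only constant: since $\delta_j^2|a_j||b_j|\lesssim M_2^{2\sigma}|e_j|\bigl(M_2^{2\sigma}|e_j|+|e_j|^{1+2\sigma}\bigr)$ and $|b_j|$ has no a priori bound, you are left with a $\tau^2\|e^k\|_{l^\infty}^{2\sigma}\|e^k\|_{l^2}^2$-type contribution, and bounding $\|e^k\|_{l^\infty}$ by an inverse inequality would reintroduce precisely the mesh-dependent restriction the theorem is designed to avoid. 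The fix is the paper's route in \cref{lem:diff_Phi_B_2_sigma<1/2}: do not square, but factor pointwise, with $|z_2|\leq|z_1|$,
\begin{equation*}
	\bigl|\pbt{z_1}-\pbt{z_2}\bigr|\leq|z_1-z_2|+\tau|z_2|\,\bigl|f(|z_1|^2)-f(|z_2|^2)\bigr|\leq\bigl(1+2\sigma|\beta|\tau\min\{|z_1|,|z_2|\}^{2\sigma}\bigr)|z_1-z_2|,
\end{equation*}
so the smaller modulus (hence $\leq|a_j|\leq M_2$ at every grid point, whichever of the two functions is large) absorbs the singularity with \emph{no} quadratic-in-$\tau$ remainder at all; summing then gives \cref{prop:stability of Phi_AB_sigma<1/2} directly. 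With that substitution your argument closes and delivers \cref{eq:thm1} exactly as in the paper.
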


\begin{corollary}\label{cor:1D}
When $ d = 1 $ and $ 0 < \sigma \leq 1/2 $, under the following much weaker assumptions
\[ V \in H^1(\Omega), \qquad  \psi \in C([0, T]; H_0^1(\Omega)),
\]
we have  for $ 0 < \tau < 1 $ and $ 0<h<1 $,
\begin{equation}\label{eq:thm1_1D}
\|\psi(\cdot,t_k)- I_N \psi^k \|_{L^2} \lesssim \tau^{1/2} + h, \quad 0 \leq k \leq \frac{T}{\tau}.
\end{equation}
\end{corollary}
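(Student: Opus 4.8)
The plan is to run the same argument as for \cref{thm:sigma<1/2}, but to invoke only the $H^1$-regularity of $\psi$ in place of its $H^2$-regularity; the price is a loss of $\tau^\sigma$ and $h^{2\sigma}$ in the two rates. Two structural facts make this work in one dimension. First, the embedding $H^1(\Omega)\hookrightarrow L^\infty(\Omega)$ upgrades the hypothesis $\psi\in C([0,T];H^1_0)$ to a uniform bound $\|\psi\|_{L^\infty([0,T];L^\infty)}\lesssim1$, which is exactly the sup-norm control that \cref{lem:prop_fe} and \cref{lem:Be} demand of their arguments. Second, the unconditional $L^2$-stability of the numerical flow $\Phi^\tau$ (the key ingredient already used for \cref{thm:sigma<1/2} when $0<\sigma\le1/2$) requires no a priori bound on $I_N\psi^k$; I would invoke it unchanged, reducing the whole estimate to the one-step (local) error committed on the exact solution.

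For the local error I would regularize, writing $f=\fe+(f-\fe)$, and estimate the three resulting pieces. The regularized splitting error is $O(\tau^{3/2})$ in $L^2$: expanding both the exact and the split flows by Duhamel, the two leading contributions are the dispersion defect $\int_0^\tau\bigl(e^{i(\tau-s)\Delta}-e^{i\tau\Delta}\bigr)\bigl[(V+\fe(|\psi|^2))\psi\bigr]\,ds$ and a state-difference term. For the former I use the half-order smoothing bound $\|(e^{is\Delta}-1)g\|_{L^2}\lesssim s^{1/2}\|g\|_{H^1}$ together with the crucial \emph{$\vep$-independent} estimate $\|\fe(|\psi|^2)\psi\|_{H^1}\lesssim1$ from \cref{eq:Be_H1} (available precisely for $0<\sigma\le1/2$) and the $H^1$-algebra bound on $V\psi$; for the latter, the map $z\mapsto\fe(|z|^2)z$ is Lipschitz on $L^\infty$-bounded sets with an $\vep$-independent constant by \cref{f_1}. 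Each contributes $\int_0^\tau s^{1/2}\,ds=O(\tau^{3/2})$. The spatial (interpolation) part of the local error is only $O(\tau h)$, not $O(h)$: since $e^{i\tau\Delta}$ commutes with $P_N$ and $I_N$ acts as the identity on $X_N$, the $O(h)$ projection/interpolation defect always multiplies the $O(\tau)$ nonlinear increment $\Phi_B^\tau-\mathrm{Id}$, whose $H^1$-size is again controlled $\vep$-independently through \cref{eq:Be_H1}. Finally the regularization defect is $O(\tau\,\vep^{1+2\sigma})$ per step, because $|f-\fe|\lesssim\vep^{2\sigma}\mathbbm{1}_{\rho<\vep^2}$ by \cref{eq:f-fe1} and $|\psi|<\vep$ on that support.

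Summing these three pieces over the $O(T/\tau)$ steps by discrete Gronwall, using the unconditional $L^2$-stability, yields a global bound $\lesssim\tau^{1/2}+h+\vep^{1+2\sigma}$; the $\tau h$ spatial defect telescopes to $O(h)$ exactly because of its extra factor $\tau$. Choosing $\vep=\tau^{1/(2(1+2\sigma))}$ makes $\vep^{1+2\sigma}=\tau^{1/2}$ and gives the claimed bound $O(\tau^{1/2}+h)$ of \cref{cor:1D}.

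The main obstacle is the temporal local error—producing $O(\tau^{3/2})$ out of mere $H^1$-regularity. The difficulty is genuine: the unregularized nonlinearity $f(|\psi|^2)\psi$ is \emph{not} in $H^1$ when $\sigma<1/2$, since its gradient contains a factor $|\psi|^{2\sigma-2}$, so the half-order smoothing estimate cannot be applied to it directly. The regularization is precisely what removes this singularity while keeping the $H^1$-norm bounded uniformly in $\vep$ (\cref{eq:Be_H1}, which in turn rests on \cref{lem:prop_fe} and the $1$D embedding $H^1\hookrightarrow L^\infty$), and \cref{eq:f-fe1} keeps the cost of regularizing subdominant. The second, more technical, point is to arrange that \emph{every} interpolation and projection defect carries a factor $\tau$, so that these defects sum to $O(h)$ rather than $O(h/\tau)$; this, together with the unconditional $L^2$-stability, is what lets the entire argument run without any a priori $H^1$- or $L^\infty$-bound on the numerical solution $I_N\psi^k$.
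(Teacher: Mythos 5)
Your proposal is correct and reaches the stated rate, and its global skeleton --- one-step local error, the unconditional $L^2$-stability of \cref{prop:stability of Phi_AB_sigma<1/2} (which indeed requires no a priori bound on the numerical solution, only the $L^\infty$-bound of $P_N\psi(\cdot,t_k)$ supplied by $H^1\hookrightarrow L^\infty$ in 1D), and discrete Gronwall --- is exactly the paper's: see \cref{rem:local_error_1D} and the one-sentence proof of \cref{cor:1D} following \cref{thm:sigma<1/2}. Where you genuinely diverge is the local truncation error. You retain the regularization $f=\fe+(f-\fe)$, pay a per-step defect $O(\tau\,\vep^{1+2\sigma})$ via \cref{eq:f-fe1}, use the $\vep$-uniform bound \cref{eq:Be_H1}, and balance with $\vep=\tau^{1/(2(1+2\sigma))}$. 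The paper instead observes in \cref{rem:local_error_1D} that at the $H^1$ level no regularization is needed at all: one reruns the proof of \cref{prop:local_error_sigma<1/2} with the weaker estimates $\|(I-e^{it\Delta})v\|_{L^2}\lesssim\sqrt{t}\,\|v\|_{H^1}$ and $\|v-P_Nv\|_{L^2}\lesssim h\,|v|_{H^1}$ applied directly to $B(P_N\vini)$, which is bounded in $H^1$ for \emph{every} $\sigma>0$ by \cref{lem:B_1}, yielding \cref{eq:local_error_1D} in one stroke.

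This brings me to the one incorrect assertion in your writeup, which is precisely what motivates your detour: you claim that $f(|\psi|^2)\psi\notin H^1$ for $\sigma<1/2$ because ``its gradient contains a factor $|\psi|^{2\sigma-2}$''. It does not. By \cref{eq:partial_B}, $\nabla\bigl(f(|v|^2)v\bigr)=(1+\sigma)f(|v|^2)\nabla v+G(v)\nabla\overline{v}$, and the identity $\rho f'(\rho)=\sigma f(\rho)$ (equivalently \cref{eq:bound_G}) shows that every coefficient of $\nabla v$, $\nabla\overline{v}$ has modulus $\lesssim|v|^{2\sigma}$: the singular factor $|v|^{2\sigma-2}$ is always paired with $v^2$ or $|v|^2$. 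Hence $\|f(|v|^2)v\|_{H^1}\leq C(\|v\|_{L^\infty})\|v\|_{H^1}$ unconditionally, and the half-order smoothing bound \emph{can} be applied to the unregularized nonlinearity. The singularity genuinely survives only at the level of second derivatives, where $\sqrt{\rho}\,f'(\rho)\sim\rho^{\sigma-1/2}$ blows up for $\sigma<1/2$ (cf.\ \cref{f_2} and \cref{eq:Be_H2}); that is why the paper introduces $\fe$ for the $H^2$-based rates $\tau^{1/2+\sigma}+h^{1+2\sigma}$ of \cref{thm:sigma<1/2}, but not for this corollary. Your regularized route is harmless --- the $\vep$-balancing does recover $O(\tau^{1/2}+h)$, and your insistence that every interpolation/projection defect carry a factor $\tau$, so that the spatial defects sum to $O(h)$ rather than $O(h/\tau)$, correctly mirrors the structure of \cref{eq:local_error_1D} --- but it is strictly unnecessary machinery here, and the ``genuine difficulty'' it is designed to overcome is a phantom.
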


\begin{theorem}\label{thm:sigma>1/2H2}
When $ \sigma \geq 1/2 $, under the assumptions $ V \in H^2(\Omega) \cap W^{1, \infty}(\Omega) $ and \cref{A}, there exist $ \tau_0>0 $ and $ h_0>0 $ sufficiently small and depending on $ M_2 $, $ \| V \|_{W^{1, \infty}} $ and $ T $ such that for $ \tau \leq \tau_0 $ and $ h \leq h_0 $ satisfying \cref{cfl}, we have
	\begin{equation}\label{eq:thm2_part1}
		\begin{aligned}
			&\| \psi(\cdot,t_k) -I_N \psi^{k}  \|_{L^2} \lesssim \tau + h^2, \quad \| \psi^k \|_{l^\infty} \leq 1+M_2\\
			&\| \psi(\cdot,t_k)-I_N \psi^{k}  \|_{H^1} \lesssim \tau^\frac{1}{2} + h, \quad 0 \leq k \leq \frac{T}{\tau}.
		\end{aligned}
	\end{equation}
Moreover, when $ 1/2<\sigma<1 $, under the additional assumptions that $ V \in H^{3}(\Omega) $, $ \nabla V \in H_0^1(\Omega) $ and $ \psi \in C([0, T]; H_*^3(\Omega)) \cap C^1([0, T]; H^1(\Omega)) $, we have
\begin{equation}\label{eq:thm2_part2}
\| \psi(\cdot,t_k)-I_N \psi^{k}  \|_{H^1} \lesssim \tau^\sigma + h^{2\sigma}, \quad 0 \leq k \leq \frac{T}{\tau},
	\end{equation}
where $H_*^3(\Omega):=\{\phi\in H^3(\Omega)\ | \ \phi(\vx)|_{\partial\Omega}=\Delta \phi(\vx)|_{\partial\Omega}=0 \}$.
\end{theorem}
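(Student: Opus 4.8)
The plan is to run the standard Lady--Windermere telescoping for the one-step map $\Phi^\tau$ of \cref{eq:phitau_def}, working at the projected level $\eta^k:=P_N\psi(\cdot,t_k)-I_N\psi^k\in X_N$ and deriving a recurrence $\|\eta^{k+1}\|\leq(1+C\tau)\|\eta^k\|+\|\mathcal{L}^k\|$ with local error $\mathcal{L}^k:=P_N\psi(\cdot,t_{k+1})-\Phi^\tau(P_N\psi(\cdot,t_k))$, the residual projection error $\|(I-P_N)\psi\|$ being reconciled separately. Since $e^{i\tau\Delta}$ is an $L^2$- and $H^1$-isometry commuting with $P_N$, the constant $1+C\tau$ is inherited from the Lipschitz constant of the nonlinear subflow $\Phi_B^\tau$ followed by $I_N$; for $\sigma\geq\frac12$ the quantities $\rho^{1/2}f'(\rho)$ and $\rho^{3/2}f''(\rho)$ are bounded on bounded sets, so both Lipschitz estimates hold with a constant depending only on the $l^\infty$-bound of the two inputs and on $\|V\|_{W^{1,\infty}}$. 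This is the \emph{conditional} stability of the abstract: it becomes usable only after an $l^\infty$-bound on $I_N\psi^k$ is secured.

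For \cref{eq:thm2_part1} I would couple the error bound with the $l^\infty$-bound in one induction on $k$, with hypothesis $\|\eta^j\|_{L^2}\lesssim\tau+h^2$, $\|\eta^j\|_{H^1}\lesssim\tau^{1/2}+h$ and $\|\psi^j\|_{l^\infty}\leq 1+M_2$ for $j\leq k$. The consistency inputs $\|\mathcal{L}^k\|_{L^2}\lesssim\tau^2+\tau h^2$ and $\|\mathcal{L}^k\|_{H^1}\lesssim\tau^{3/2}+\tau h$ follow from the Duhamel form of $\mathcal{L}^k$: with $\chi(s)=\Phi_B^s(\psi(\cdot,t_k))$ the splitting defect is governed by $\|(e^{-is\Delta}-1)B(\chi)\|$ and $\|B(\psi(\cdot,t_k+s))-B(\chi)\|$, using $\|(e^{-is\Delta}-1)g\|_{L^2}\lesssim s\|g\|_{H^2}$, $\|(e^{-is\Delta}-1)g\|_{H^1}\lesssim s^{1/2}\|g\|_{H^2}$ and $\|B(\chi)\|_{H^2}\lesssim C(M_2)$ (finite exactly because $\sigma\geq\frac12$), while the spatial defect gains an extra factor $\tau$ since the leading term of $\Phi_B^\tau(\phi)-\phi$ is $O(\tau)$ and $(I-I_N)\phi=0$ for $\phi\in X_N$. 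Discrete Gronwall then gives the two rates. To close the induction I would promote the $H^1$-error to an $L^\infty$-bound via an inverse inequality on $X_N$, $\|v\|_{L^\infty}\lesssim C_d(h)\|v\|_{H^1}$ with $C_d(h)=O(1),O(|\ln h|^{1/2}),O(h^{-1/2})$ for $d=1,2,3$; together with $\|\psi(\cdot,t_k)\|_{L^\infty}\leq M_2$ this yields $\|I_N\psi^k\|_{L^\infty}\leq M_2+C_d(h)(\tau^{1/2}+h)$, and the restriction \cref{cfl} is precisely what forces the second term below $1$, reproducing $\|\psi^k\|_{l^\infty}\leq 1+M_2$.

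For \cref{eq:thm2_part2} with $\frac12<\sigma<1$ I would first invoke \cref{eq:thm2_part1} (applicable since $H_*^3\hookrightarrow H^2$) to fix $\|\psi^k\|_{l^\infty}\leq 1+M_2$ unconditionally, so the conditional $H^1$-stability holds throughout with no further induction. The gain comes from sharper consistency via the regularization \cref{eq:f_reg_def}: I would estimate $\mathcal{L}^k$ by inserting the $f_\varepsilon$-flow and splitting into (i) the continuous regularization defect, (ii) the $f_\varepsilon$-splitting defect, and (iii) the one-step $f$-versus-$f_\varepsilon$ mismatch. By \cref{eq:f-fe1} and the pointwise bounds of \cref{lem:prop_fe}, terms (i) and (iii) are $\lesssim\tau\varepsilon^{2\sigma}$ in $H^1$, the disagreement region $\{\rho<\varepsilon^2\}$ supplying the extra factor $\varepsilon$; for (ii) the $H^3$-regularity upgrades the dispersive estimate to $\|(e^{-is\Delta}-1)g\|_{H^1}\lesssim s\|g\|_{H^3}$ at the cost of $\|f_\varepsilon(|v|^2)v\|_{H^3}\lesssim\varepsilon^{2\sigma-2}$ from \cref{eq:Be_H3}, giving a temporal defect $\lesssim\tau^2\varepsilon^{2\sigma-2}$ and, by the same $(I-I_N)$-argument, a spatial defect $\lesssim\tau h^2\varepsilon^{2\sigma-2}$. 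After Gronwall the global $H^1$-error is $\lesssim(\tau+h^2)\varepsilon^{2\sigma-2}+\varepsilon^{2\sigma}$, and the balancing choice $\varepsilon=(\tau+h^2)^{1/2}$ produces the stated $O(\tau^\sigma+h^{2\sigma})$.

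The hardest step is the consistency analysis (ii): I must show the $f_\varepsilon$-splitting defect genuinely saturates $\tau^2\varepsilon^{2\sigma-2}$, which forces me to propagate the $H^3$-norm through $\chi(s)=\Phi_B^s(\psi(\cdot,t_k))$ with the \emph{correct} power of $\varepsilon$ and no loss. This is where the structural hypotheses $V\in H^3$, $\nabla V\in H_0^1$ and $\psi\in C([0,T];H_*^3)$ enter: the conditions $\phi|_{\partial\Omega}=\Delta\phi|_{\partial\Omega}=0$ are exactly what make $\Delta$ and $e^{i\tau\Delta}$ preserve $H_*^3$ in the sine basis, so the projection and interpolation errors are truly $O(h^2)$ and the third-derivative bound of the regularized nonlinearity is controlled solely by \cref{eq:Be_H3}. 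Verifying that differentiating $\Phi_B^s(\chi)$ three times never produces a worse power than $\varepsilon^{2\sigma-2}$—i.e. that the singular factors $\rho^{\sigma-3/2}$ are always paired with vanishing powers of $|\chi|$ as quantified by \cref{lem:prop_fe}—is the crux on which the optimal exponent $\sigma$ finally rests.
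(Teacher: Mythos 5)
Your architecture for \cref{eq:thm2_part1} is the paper's: Lie--Trotter consistency at the projected level via the Duhamel form, an $l^\infty$-conditional stability bound for $I_N\circ\Phi_B^\tau$, and a single induction coupling the $H^1$-error with the $l^\infty$-bound through inverse/discrete Sobolev inequalities, with \cref{cfl} entering exactly where you place it (the paper's proof and \cref{rem:dgeq2_case_1_thm} use the discrete inequalities with constants $O(1)$, $O(|\ln h|)$, $O(h^{-1/2})$ for $d=1,2,3$; your $|\ln h|^{1/2}$ variant is immaterial). However, there is one genuine gap in your stability step: the claim that the $H^1$-Lipschitz constant of the one-step map depends ``only on the $l^\infty$-bound of the two inputs and on $\|V\|_{W^{1,\infty}}$'' is false. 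Differencing $\Phi_B^\tau(v)-v$ against $\Phi_B^\tau(w)-w$ produces, with $e=v-w$, the quadratic term $h\sum_{j}|\dxp v_j|^2\left(|e_j|^2+|e_{j+1}|^2\right)$, which no amount of $l^\infty$ information on $v,w$ controls when $d=2,3$; this is why the paper's \cref{prop:stability of Phi_AB_sigma>1/2} carries the extra argument $M_1=\|v\|_{H^2}$ and dispatches the term via the discrete Gagliardo--Nirenberg inequality \cref{discrete G-N} together with the discrete embedding $\|\dxp\phi\|_{l^4}\lesssim\|\phi\|_{H^2}$ of \cref{discrete_embedding}, proved in the appendix. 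Relatedly, ``$\Phi_B^\tau$ followed by $I_N$'' cannot simply inherit a continuous $H^1$-Lipschitz bound, because $I_N$ is not $H^1$-stable on compositions such as $v e^{-i\tau(V+f(|v|^2))}$; the estimate must be run at grid level through the norm equivalence \cref{eq:H1_I_N}, as in \cref{dxpv,dxpw}. In your induction the missing $H^2$ control is available for the exact-solution argument $v=P_N\psi(\cdot,t_k)$, so the theorem survives, but the stability lemma as you state it is not true and the discrete machinery is not optional.

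For \cref{eq:thm2_part2} your route is genuinely different from, and strictly heavier than, the paper's. You introduce an auxiliary regularized evolution and must propagate $H^3$ bounds through the nonlinear subflow with controlled powers of $\vep$ --- precisely the step you flag as the crux. The paper never does this: the scheme and the exact flow keep $f$ throughout, and the splitting $f=(f-\fe)+\fe$ is inserted only inside the two \emph{stationary} consistency quantities $\|(I-e^{i\tau\Delta})P_NB(\phi)\|_{H^1}$ and $\|(I_N-P_N)B(\phi)\|_{H^1}$ with $\phi=P_N\psi(\cdot,t_k)$ frozen (\cref{lem:fractional_error_2}); the defect part is $O(\vep^{2\sigma})$ in $H^1$ by the pointwise cancellations behind \cref{eq:nabla_diff_f_fe}, the smooth part costs $\tau\vep^{2\sigma-2}$ resp.\ $h^2\vep^{2\sigma-2}$ via \cref{eq:Be_H3}, and $\vep$ is optimized separately ($\vep\sim\tau^{1/2}$ and $\vep\sim h$) inside the lemma. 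Your single global choice $\vep=(\tau+h^2)^{1/2}$ yields the same rate since $(\tau+h^2)^\sigma\lesssim\tau^\sigma+h^{2\sigma}$, so that difference is cosmetic; what matters is that in the paper no $H^3$ norm ever passes through $\Phi_B^s$ --- the only $H^3$ inputs are the assumed bounds on $\psi$ and $V$ used in the analogues of $e_3^1,e_3^2$, and the $l^\infty$-bound of the numerical solution is imported from \cref{eq:thm2_part1} exactly as you propose. If you insist on the auxiliary-flow decomposition, you additionally owe an $H^3$ bound on the regularized subflow of the form $1+Cs\vep^{2\sigma-2}$ (harmless for $\vep\sim\tau^{1/2}$ since $\tau\vep^{2\sigma-2}=\tau^\sigma\lesssim1$), but this is avoidable work that the paper's stationary insertion of $\fe$ eliminates entirely.
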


\begin{remark}
	When $ \sigma \geq 1 $, under the same assumptions as those for \cref{eq:thm2_part2}, one can obtain the following error bound for the TSSP method \cref{full_discretization_scheme} as
	\begin{equation*}
		\| \psi(\cdot,t_k)-I_N \psi^{k}  \|_{H^1} \lesssim \tau + h^2, \quad 0 \leq k \leq \frac{T}{\tau}.
	\end{equation*}
\end{remark}
	


\section{Proof of \cref {thm:sigma<1/2} for the case $ 0 < \sigma \leq 1/2  $}
Throughout this section, we assume that $ V \in H^2(\Omega) $, $ 0 < \sigma \leq 1/2  $ and the assumption \cref{A}.
\subsection{Some estimates for the operator $B$ }

For the operator $ B $ defined in \cref{eq:AB}, we have

\begin{lemma}\label{lem:B_1}
	Let $ v \in H^1(\Omega) $ such that $ \| v \|_{L^\infty} \leq M $. When $ \sigma > 0 $, we have
\begin{align}
	&\| \B{v} \|_{L^2} \leq C_1(M, \| V \|_{L^\infty}) \| v \|_{L^2}, \label{eq:B_L2}\\
  	&\| \B{v} \|_{H^1} \leq \| v \|_{H^1} \left\{\begin{array}{ll}
  C_2(M, \| V \|_{H^1}), &d=1,\\
  C_2(M, \| V \|_{W^{1, 4}}), &d=2,3.
  \end{array}\right.
\end{align}
\end{lemma}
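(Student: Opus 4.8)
The plan is to treat the two pieces $\B{v} = B_1(v) + B_2(v) = -iVv - if(|v|^2)v$ separately, bounding each in $L^2$ and then via its gradient. For the $L^2$ estimate, the triangle inequality gives $\|\B{v}\|_{L^2} \le \|Vv\|_{L^2} + \|f(|v|^2)v\|_{L^2}$. The first term is at most $\|V\|_{L^\infty}\|v\|_{L^2}$, and since $f(\rho) = \beta\rho^\sigma$ with $\|v\|_{L^\infty} \le M$ we have the pointwise bound $|f(|v|^2)| = |\beta|\,|v|^{2\sigma} \le |\beta| M^{2\sigma}$, so the second term is at most $|\beta| M^{2\sigma}\|v\|_{L^2}$. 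Adding these yields \cref{eq:B_L2} with $C_1(M, \|V\|_{L^\infty}) = \|V\|_{L^\infty} + |\beta|M^{2\sigma}$.

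For the $H^1$ bound I would estimate $\|\nabla \B{v}\|_{L^2}$. Writing $\nabla(Vv) = (\nabla V)v + V\nabla v$ and, for the nonlinear term, $\nabla(f(|v|^2)v) = f(|v|^2)\nabla v + f\p(|v|^2)v^2\nabla\overline{v} + f\p(|v|^2)|v|^2\nabla v$, the key observation is that each of the three coefficients $f(|v|^2)$, $f\p(|v|^2)v^2$, $f\p(|v|^2)|v|^2$ is, up to a constant depending only on $\beta$ and $\sigma$, bounded pointwise by $|v|^{2\sigma}$, so that a.e.
\[
\abs{\nabla(f(|v|^2)v)} \lesssim |v|^{2\sigma}\abs{\nabla v} \le M^{2\sigma}\abs{\nabla v}.
\]
This gives $\|\nabla(f(|v|^2)v)\|_{L^2} \lesssim M^{2\sigma}\|\nabla v\|_{L^2}$, so the nonlinear term contributes only the $M$-dependence of $C_2$.

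The remaining work is to bound $\|(\nabla V)v\|_{L^2}$ and $\|V\nabla v\|_{L^2}$, and this is precisely where the dimensional splitting in the statement arises. In $d=1$ I would use the embedding $H^1 \hookrightarrow L^\infty$ to get $\|(\nabla V)v\|_{L^2} \le \|\nabla V\|_{L^2}\|v\|_{L^\infty} \lesssim \|V\|_{H^1}\|v\|_{H^1}$ and $\|V\nabla v\|_{L^2} \le \|V\|_{L^\infty}\|\nabla v\|_{L^2} \lesssim \|V\|_{H^1}\|v\|_{H^1}$. In $d=2,3$ the embedding $H^1 \hookrightarrow L^\infty$ fails, so I would instead apply H\"older with exponents $4,4$ together with $H^1 \hookrightarrow L^4$ (valid for $d\le 4$) and the Morrey embedding $W^{1,4}\hookrightarrow L^\infty$ (valid since $4 > d$ for $d\le 3$), obtaining $\|(\nabla V)v\|_{L^2} \le \|\nabla V\|_{L^4}\|v\|_{L^4} \lesssim \|V\|_{W^{1,4}}\|v\|_{H^1}$ and $\|V\nabla v\|_{L^2} \le \|V\|_{L^\infty}\|\nabla v\|_{L^2} \lesssim \|V\|_{W^{1,4}}\|v\|_{H^1}$. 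Collecting the three contributions gives the claimed bound with $C_2$ depending on $M$ and $\|V\|_{H^1}$ (resp.\ $\|V\|_{W^{1,4}}$).

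The one point needing care — the main, if mild, obstacle — is the formal differentiation of the semi-smooth term $f(|v|^2)v = \beta|v|^{2\sigma}v$, since $f$ is not differentiable at $\rho = 0$ when $0<\sigma<1$. I would justify it by observing that the map $\G{z} = |z|^{2\sigma}z$ on $\C$ is locally Lipschitz for every $\sigma > 0$, with Lipschitz constant on $\{|z| \le M\}$ of order $M^{2\sigma}$: its classical derivative off the origin is $O(|z|^{2\sigma})$ and tends to $0$ as $z\to0$. Hence, by the chain rule for compositions of a Lipschitz function with an $H^1$ map, $f(|v|^2)v \in H^1$ whenever $v \in H^1$ with $\|v\|_{L^\infty}\le M$, and the a.e.\ pointwise bound above is legitimate. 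Everything else reduces to H\"older's inequality and the Sobolev embeddings valid in $d=1,2,3$.
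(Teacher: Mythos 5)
Your proposal is correct and follows essentially the same route as the paper's proof: the identical splitting of $B(v)$, the same product-rule expansion of $\nabla(f(|v|^2)v)$ with the pointwise bounds $f\p(|z|^2)|z|^2 = \sigma f(|z|^2)$ and $|f\p(|z|^2)z^2| \lesssim |z|^{2\sigma}$ (the paper's function $G$ in \cref{eq:G_def,eq:bound_G}), and the same dimensional case split using $H^1 \hookrightarrow L^\infty$ in $d=1$ and H\"older with $H^1 \hookrightarrow L^4$ in $d=2,3$. Your two additions--making the Morrey embedding $W^{1,4} \hookrightarrow L^\infty$ explicit for the $\|V \nabla v\|_{L^2}$ term, and justifying the formal differentiation of the semi-smooth term via the Lipschitz chain rule--are both sound and merely spell out what the paper leaves implicit.
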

\begin{proof}
From the definition of $ B $ in \cref{eq:AB}, we have
\begin{equation}
	\| B(v) \|_{L^2} \leq \| V \|_{L^\infty} \| v \|_{L^2} + C(\| v \|_{L^\infty}) \| v \|_{L^2},
\end{equation}
which implies \cref{eq:B_L2}.

Introduce a continuous function $ G: \C \rightarrow \C $ as
\begin{equation}\label{eq:G_def}
	G(z) = \left\{
	\begin{aligned}
		& f^\prime(|z|^2) z^2=\beta \sigma |z|^{2\sigma-2} z^2, &z \neq 0, \\
		&0, &z=0,
	\end{aligned}
	\right. \qquad z \in \C,
\end{equation}
and {note that $ f^\prime(|z|^2) |z|^2 = \sigma f(|z|^2) $ for $z \in \C$. Further note that}
\begin{equation}\label{eq:bound_G}
	f(|z|^2) + |G(z)| \lesssim |z|^{2\sigma}, \qquad z \in \C, \qquad \sigma > 0.
\end{equation}
Direct calculation yields
\begin{align}\label{eq:partial_B}
		\nabla B(v)
		&= -i\left[ V \nabla v + v \nabla V  + f(|v|^2) \nabla v + f\p(|v|^2) v ( v\nabla \overline{v} + \overline{v} \nabla v) \right] \notag\\
		&= -i\left[ V \nabla v + v \nabla V  + (1+\sigma) f(|v|^2) \nabla v + \G{v} \nabla\overline{v} \right],
\end{align}
where $ G(v)(\vx) := G(v(\vx)) $ for $ \vx \in \Omega $. From \cref{eq:partial_B}, using H\"{o}lder's inequality and noticing \cref{eq:bound_G}, we obtain
\begin{equation}\label{eq:est_dB}
\| \nabla B(v) \|_{L^2}
\lesssim \| V \|_{L^\infty} \| \nabla v \|_{L^2} + \| v \|_{L^\infty}^{2\sigma} \| \nabla v \|_{L^2} +
\left\{\begin{array}{ll}
	  \| v \|_{L^\infty} \| \nabla V \|_{L^2}, &d=1, \\
	  \| v \|_{L^4} \| \nabla V \|_{L^4}, &d=2, 3,
\end{array}\right.,
\end{equation}
where different estimates are used for $ v \nabla V $ for $ d=1 $ and $ d=2,3 $.
Thus we have, {by Sobolev embedding $H^1 \hookrightarrow L^\infty$ when $d=1$ and $H^1 \hookrightarrow L^4$ when $d=2, 3$,}
\begin{equation*}
\| \nabla B(v) \|_{L^2}\leq C(\| v \|_{L^\infty}) \| v \|_{H^1}+
 \| v \|_{H^1} \left\{\begin{array}{ll}
 C(\| V \|_{H^1}), &d=1,\\
 C(\| V \|_{W^{1, 4}}), &d=2,3,\\
 \end{array}\right.
\end{equation*}
which completes the proof.
\end{proof}

\begin{lemma}\label{lem:diff_B_L2}
	Let $ v, w \in L^\infty(\Omega) $ such that $ \| v \|_{L^\infty} \leq M $ and $ \| w \|_{L^\infty} \leq M $. When $ \sigma > 0 $, we have
	\begin{equation*}
		\| \B{v} - \B{w} \|_{L^2} \leq C(M, \| V \|_{L^\infty}) \| v -w \|_{L^2}.
	\end{equation*}
\end{lemma}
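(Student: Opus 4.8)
The plan is to split $B(v)-B(w)$ into its linear potential part and its nonlinear part, bound the former trivially, and reduce the latter to a pointwise Lipschitz estimate for the map $g(z):=f(|z|^2)z=\beta|z|^{2\sigma}z$ on the ball $\{|z|\le M\}\subset\C$. Writing $B(v)-B(w)=-iV(v-w)-i\bigl(g(v)-g(w)\bigr)$, the potential term is immediately controlled by $\|V\|_{L^\infty}\|v-w\|_{L^2}$, so everything rests on establishing the pointwise bound $|g(z_1)-g(z_2)|\le C(M,\sigma)\,|z_1-z_2|$ for all $z_1,z_2$ with $|z_1|,|z_2|\le M$. Integrating this over $\Omega$ (using $|v(\vx)|,|w(\vx)|\le M$ a.e.\ together with $\|V\|_{L^\infty}$) then yields $\|g(v)-g(w)\|_{L^2}\le C(M,\sigma)\|v-w\|_{L^2}$, and the claimed form $C(M,\|V\|_{L^\infty})$ follows.

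For the pointwise estimate, the key point is that, although $z\mapsto|z|^{2\sigma}$ is only H\"older continuous near the origin when $0<\sigma<1/2$, the combined map $g$ remains Lipschitz on bounded sets. Computing the Wirtinger derivatives gives $\partial_z g=(1+\sigma)f(|z|^2)$ and $\partial_{\bar{z}}g=G(z)$ with $G$ as in \cref{eq:G_def}, so $\|Dg(z)\|\le(1+\sigma)|f(|z|^2)|+|G(z)|\lesssim|z|^{2\sigma}$ for every $z\neq0$ by \cref{eq:bound_G}. I would then argue by a dichotomy. If $|z_1-z_2|\ge\tfrac12\max(|z_1|,|z_2|)$, I bound directly $|g(z_1)-g(z_2)|\le|g(z_1)|+|g(z_2)|\lesssim M^{2\sigma}\bigl(|z_1|+|z_2|\bigr)\lesssim M^{2\sigma}|z_1-z_2|$, since $|z_1|,|z_2|\le\max(|z_1|,|z_2|)\le2|z_1-z_2|$. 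Otherwise $|z_1-z_2|<\tfrac12\max(|z_1|,|z_2|)$; in this regime the straight segment joining $z_1$ and $z_2$ stays away from the origin, since the convexity of $s\mapsto|z_1+s(z_2-z_1)|$ forces its modulus to exceed $\max(|z_1|,|z_2|)-|z_1-z_2|\ge\tfrac12\max(|z_1|,|z_2|)>0$ while remaining at most $M$. Hence $g$ is smooth along the segment, and the fundamental theorem of calculus with the derivative bound $\|Dg\|\lesssim|z|^{2\sigma}\le M^{2\sigma}$ there gives $|g(z_1)-g(z_2)|\lesssim M^{2\sigma}|z_1-z_2|$.

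The main obstacle is precisely this behaviour near $\rho=0$ for small $\sigma$: the naive splitting $g(v)-g(w)=f(|v|^2)(v-w)+\bigl(f(|v|^2)-f(|w|^2)\bigr)w$ fails, because the factor $\bigl||v|^{2\sigma}-|w|^{2\sigma}\bigr|$ is only controlled by $|v-w|^{2\sigma}$ rather than by $|v-w|$. The dichotomy above restores the genuine Lipschitz bound by exploiting the cancellation in $g$ that this decomposition destroys, and all constants depend only on $M$, $\beta$, $\sigma$ and $\|V\|_{L^\infty}$, as required.
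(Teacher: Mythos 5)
Your proposal is correct and takes essentially the same route as the paper: both isolate the potential term via $\|V\|_{L^\infty}\|v-w\|_{L^2}$ and reduce to the pointwise Lipschitz bound for $g(z)=f(|z|^2)z$ along the segment $z_\theta=(1-\theta)z_1+\theta z_2$, using the derivative $(1+\sigma)f(|z_\theta|^2)(z_2-z_1)+G(z_\theta)\overline{(z_2-z_1)}$ and the bound $f(|z|^2)+|G(z)|\lesssim|z|^{2\sigma}$ of \cref{eq:bound_G}. Your only divergence is the dichotomy to keep the segment away from the origin, which is harmless but unnecessary: since $\sigma>0$, the derivative bound $\lesssim|z|^{2\sigma}$ vanishes as $z\to0$, so $g$ is $C^1$ on all of $\C$ (viewed as $\R^2$) and the paper's direct integration of $\gamma^\prime$ over the whole segment is already justified.
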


\begin{proof}
	Recalling \cref{eq:AB}, we have
	\begin{equation}\label{diff_B}
		\| \B{v} - \B{w} \|_{L^2} \leq \| V \|_{L^\infty} \| v-w \|_{L^2} + \| f(|v|^2)v - f(|w|^2)w \|_{L^2}.
	\end{equation}
	For any $ z_1, z_2 \in {\mathbb C} $, let $ z_\theta = (1-\theta)z_1 + \theta z_2 $ and let $ \gamma(\theta) = f(|z_\theta|^2)z_\theta $ for $ 0 \leq \theta \leq 1 $, we have
	\begin{equation}\label{f(z)z}
		f(|z_2|^2)z_2 - f(|z_1|^2)z_1  = \gamma(1) - \gamma(0) = \int_0^1 \gamma^\prime(\theta) \rmd \theta.
	\end{equation}
	Recalling \cref{semi-smooth,eq:G_def}, we have
	\begin{equation}\label{diff_gamma}
		\gamma^\prime(\theta) = (1+\sigma)f(|z_\theta|^2)(z_2 - z_1) + G(z_\theta) \overline{(z_2 - z_1)}.
	\end{equation}
	Plugging \cref{diff_gamma} into \cref{f(z)z}, noticing \cref{eq:bound_G}, we have
	\begin{equation}\label{eq:lip_f(z)z}
		|f(|z_1|^2)z_1 - f(|z_2|^2)z_2| \leq \sup_{0 \leq \theta \leq 1} |\gamma^\prime(\theta)| \lesssim \max\{|z_1|, |z_2|\}^{2\sigma} |z_1 - z_2|.
	\end{equation}
	Thus we have
	\begin{equation}
		\| f(|v|^2)v - f(|w|^2)w \|_{L^2} \leq C(\max\{\| v \|_{L^\infty}, \| w \|_{L^\infty}\}) \| v - w \|_{L^2},
	\end{equation}
	which plugged into \cref{diff_B} completes the proof.
\end{proof}

Let $ dB(\cdot)[\cdot] $ be the G\^ateaux derivative defined as
\begin{equation}\label{gateaux}
dB(v)[w]:= \lim_{\varepsilon \rightarrow 0} \frac{B(v + \varepsilon w) - B(v)}{\vep}, 
\end{equation}
{where the limit is taken for real $ \vep $, and we identify $\C$ with $\R^2$ to be consistent with the complex valued setting (see also the appendix in \cite{kato1987}). }
Then we have
\begin{lemma}\label{lem:dB_L2}
	Let $ v \in L^\infty(\Omega) $ such that $ \| v \|_{L^\infty} \leq M $ and $ w \in L^2(\Omega) $. When $ \sigma > 0 $, we have
\begin{equation*}
\| dB(v) [w] \|_{L^2} \leq C(M, \| V \|_{L^\infty}) \| w \|_{L^2}.
	\end{equation*}
\end{lemma}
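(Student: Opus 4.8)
The plan is to compute the G\^ateaux derivative $dB(v)[w]$ in closed form and then estimate it pointwise, essentially by taking the infinitesimal version of the computation already carried out in the proof of \cref{lem:diff_B_L2}. First I would split $\B{v} = -iVv - if(|v|^2)v$ into its linear and nonlinear parts. The linear part $-iVv$ is genuinely $\C$-linear in $v$, so it contributes $-iVw$ to the derivative, which is controlled in $L^2$ by $\|V\|_{L^\infty}\|w\|_{L^2}$. The only real work is the nonlinear part $g(z) = f(|z|^2)z$, for which the key point is that $g$ is \emph{not} holomorphic, so the derivative for real $\vep$ will pick up a term in $\overline{w}$ in addition to a term in $w$.

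For the nonlinear part I would set $\gamma(\theta) = g(v + \theta\vep w)$ with real $\vep$ and $0 \leq \theta \leq 1$, and follow \cref{f(z)z,diff_gamma} verbatim with $z_1 = v$ and $z_2 = v + \vep w$. This gives $g(v + \vep w) - g(v) = \int_0^1 \gamma\p(\theta)\,\rmd\theta$ with $\gamma\p(\theta) = \vep\left[(1+\sigma)f(|z_\theta|^2)w + \G{z_\theta}\overline{w}\right]$, where $z_\theta = v + \theta\vep w$ and I used that $\overline{\vep w} = \vep\overline{w}$ since $\vep$ is real. Dividing by $\vep$ and letting $\vep \to 0$, the continuity of $z \mapsto f(|z|^2)$ and of $G$ (continuous on all of $\C$ by \cref{eq:G_def}) yields the explicit formula $dg(v)[w] = (1+\sigma)f(|v|^2)w + \G{v}\overline{w}$, and hence $dB(v)[w] = -i\left[Vw + (1+\sigma)f(|v|^2)w + \G{v}\overline{w}\right]$.

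It then remains to estimate this expression and integrate. Invoking the pointwise bound \cref{eq:bound_G}, namely $f(|v|^2) + |\G{v}| \lesssim |v|^{2\sigma}$, I would estimate $\left|(1+\sigma)f(|v|^2)w + \G{v}\overline{w}\right| \lesssim |v|^{2\sigma}|w| \leq M^{2\sigma}|w|$ using $\|v\|_{L^\infty} \leq M$, take the $L^2$ norm, and add the linear contribution to arrive at $\|dB(v)[w]\|_{L^2} \leq \left(\|V\|_{L^\infty} + CM^{2\sigma}\right)\|w\|_{L^2}$, which is exactly the claimed bound with $C(M, \|V\|_{L^\infty}) = \|V\|_{L^\infty} + CM^{2\sigma}$. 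The one point deserving care, and which I expect to be the main (though still mild) obstacle, is justifying the passage $\vep \to 0$ inside the integral near points where $v(\vx) = 0$, since there $f(|\cdot|^2)$ and $G$ are only H\"older continuous; this is harmless because the limit is taken at the fixed value $v(\vx)$ where both maps are continuous on $\C$, and no regularity of $v$ beyond $v \in L^\infty$ is required for the final pointwise estimate.
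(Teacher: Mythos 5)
Your proposal is correct and takes essentially the same route as the paper: both arguments reduce the lemma to the explicit formula $dB(v)[w] = -i\left[Vw + (1+\sigma)f(|v|^2)w + \G{v}\,\overline{w}\right]$ and then conclude in one line from the pointwise bound \cref{eq:bound_G} together with $\|v\|_{L^\infty} \leq M$. The only difference is that the paper imports this formula (\cref{gateaux-new}) by citation to \cite{bao2022}, whereas you rederive it in-line as the infinitesimal version of the $\gamma\p(\theta)$ computation from \cref{lem:diff_B_L2} --- a harmless (indeed more self-contained) fill-in, with your closing remark on continuity of $f(|\cdot|^2)$ and $G$ at $z=0$ correctly identifying the only delicate point in that passage to the limit.
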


\begin{proof}
Plugging \eqref{eq:AB} into \eqref{gateaux}, we obtain ({see (4.26) in \cite{bao2022}})
\begin{align}\label{gateaux-new}
dB(v)[w]
&= -i V w + d{B_{2}}(v)[w] \notag \\
&=-i \left[ V w + (1+\sigma) f(|v|^2) w + \G{v} \overline{w} \right],
\end{align}
where $ G $ is defined in \cref{eq:G_def}. From \cref{gateaux-new}, noting \cref{eq:bound_G}, we have
\begin{equation*}
	\| dB(v)[w] \|_{L^2} \leq \| V \|_{L^\infty} \| w \|_{L^2} + C(\| v \|_{L^\infty}) \| w \|_{L^2},
\end{equation*}
which concludes the proof.
\end{proof}



\begin{lemma}\label{lem:diff_Phi_B_2_sigma<1/2}
	 When $ 0 < \sigma \leq 1/2 $, we have
	\begin{equation*}
		| \Phi_{B_2}^\tau (z_1) - \Phi_{B_2}^\tau (z_2) | \leq \left(1 + C \tau\right) | z_1 - z_2 |, \qquad z_1, z_2 \in \C,
	\end{equation*}
	where $ \Phi_{B_2}^\tau (z) = z e^{- i \tau f(|z|^2)} $ in \cref{eq:phi_B_2_def} and $ C = 2\sigma|\beta| \min\{|z_1|, |z_2|\}^{2\sigma} $.
\end{lemma}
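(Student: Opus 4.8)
The plan is to reduce everything to the modulus and the phase. Writing $r_j=|z_j|$ and $\phi_j=-\tau f(r_j^2)=-\tau\beta r_j^{2\sigma}$ for $j=1,2$, we have $\Phi_{B_2}^\tau(z_j)=z_je^{i\phi_j}$ by \cref{eq:phi_B_2_def,semi-smooth}, and since $e^{i\phi}$ has modulus one, the only delicate quantity is the difference of phases. First I would record two elementary facts: $|e^{i\phi_1}-e^{i\phi_2}|\le|\phi_1-\phi_2|$, coming from $e^{ia}-e^{ib}=i\int_b^a e^{it}\,dt$, and $\big||z_1|-|z_2|\big|\le|z_1-z_2|$, the reverse triangle inequality.

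Next I would decompose the difference by inserting a mixed term. Since both the left-hand side and the proposed constant $C$ are symmetric in $z_1,z_2$, I may assume without loss of generality that $r_*:=\min\{r_1,r_2\}=r_2$, and then write
\[
  z_1 e^{i\phi_1}-z_2 e^{i\phi_2}=(z_1-z_2)e^{i\phi_1}+z_2\big(e^{i\phi_1}-e^{i\phi_2}\big),
\]
so that the factor multiplying the phase difference carries modulus exactly $r_*=\min\{|z_1|,|z_2|\}$. Taking absolute values and using $|e^{i\phi_1}|=1$ with the first elementary fact gives
\[
  |\Phi_{B_2}^\tau(z_1)-\Phi_{B_2}^\tau(z_2)|\le|z_1-z_2|+\tau|\beta|\,r_*\,\big|r_1^{2\sigma}-r_2^{2\sigma}\big|,
\]
so the task becomes controlling $r_*\big|r_1^{2\sigma}-r_2^{2\sigma}\big|$ by $|z_1-z_2|$.

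The crux is the estimate $\big|r_1^{2\sigma}-r_2^{2\sigma}\big|\le 2\sigma\,r_*^{2\sigma-1}|r_1-r_2|$ for $r_1,r_2>0$, which I would obtain from the mean value theorem applied to $s\mapsto s^{2\sigma}$: the derivative $2\sigma s^{2\sigma-1}$ is evaluated at a point between $r_1$ and $r_2$, hence no smaller than $r_*$, and since $2\sigma-1\le 0$ for $0<\sigma\le 1/2$ the factor $s^{2\sigma-1}$ is largest at $s=r_*$. Multiplying by $r_*$ yields $r_*\big|r_1^{2\sigma}-r_2^{2\sigma}\big|\le 2\sigma r_*^{2\sigma}|r_1-r_2|\le 2\sigma\min\{|z_1|,|z_2|\}^{2\sigma}|z_1-z_2|$, and substituting this into the previous display gives the claim with $C=2\sigma|\beta|\min\{|z_1|,|z_2|\}^{2\sigma}$.

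The main obstacle, and the reason $C$ carries the unusual factor $\min\{|z_1|,|z_2|\}^{2\sigma}$, is precisely that $s\mapsto s^{2\sigma}$ is only H\"older (not Lipschitz) at the origin when $2\sigma\le 1$, so a naive Lipschitz bound would blow up; the surviving modulus factor $r_*$ is exactly what compensates the singular $r_*^{2\sigma-1}$. Two points need care. The decomposition must be arranged so that the factor left multiplying the phase difference is the \emph{smaller} modulus $r_*$, since keeping the larger one would produce the uncontrolled $\max\{|z_1|,|z_2|\}\cdot r_*^{2\sigma-1}$. And the mean value theorem requires $r_*>0$; the degenerate case $r_*=0$ (that is, $z_1=0$ or $z_2=0$) must be handled directly, but there it is immediate, as then $C=0$ and the left-hand side collapses to $|z_j|=|z_1-z_2|$ for the nonzero index $j$.
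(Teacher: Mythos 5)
Your proposal is correct and follows essentially the same route as the paper's proof: the same splitting $z_1e^{i\phi_1}-z_2e^{i\phi_2}=(z_1-z_2)e^{i\phi_1}+z_2(e^{i\phi_1}-e^{i\phi_2})$ with the smaller modulus $|z_2|=\min\{|z_1|,|z_2|\}$ carried on the phase-difference term, the bound $|1-e^{i\theta}|\le|\theta|$, and the mean value theorem for $s\mapsto s^{2\sigma}$ with the derivative evaluated via the minimum since $2\sigma-1\le 0$, yielding $C=2\sigma|\beta|\min\{|z_1|,|z_2|\}^{2\sigma}$. The paper likewise reduces WLOG to $|z_2|\le|z_1|$ and treats the degenerate case $z_2=0$ separately, exactly as you do.
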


\begin{proof}
	Without loss of generality, we assume that $ | z_2 | \leq | z_1 | $. If $ z_2 = 0 $, the conclusion follows immediately. In the following, we assume that $ z_2 \neq 0 $. Then, by noting that $ |1 - e^{i \theta}| \leq |\theta| $ for all $ \theta \in \R $,
	we have
	\begin{align}\label{eq:diff_phi_B_2}
			| \Phi_{B_2}^\tau (z_1) - \Phi_{B_2}^\tau (z_2) |
			&= |z_1 e^{- i \tau f(|z_1|^2)} - z_2 e^{- i \tau f(|z_2|^2)}| \notag\\
			&\leq | z_1 - z_2 | + |z_2| \left| 1 - e^{- i \tau (f(|z_1|^2)-f(|z_2|^2))} \right| \notag\\
			&\leq | z_1 - z_2 | + \tau |z_2| \left|f(|z_1|^2)-f(|z_2|^2) \right|.
	\end{align}
	When $ 0 < \sigma \leq 1/2 $, since $ 0<|z_2|\leq|z_1| $, by the mean value theorem and the definition of $ f $ in \cref{semi-smooth}, we have
	\begin{align}\label{eq:diff_f_sigma<1/2}
			\left|f(|z_1|^2)-f(|z_2|^2) \right|
			&= |\beta|  \left ||z_1|^{2\sigma} - |z_2|^{2\sigma} \right | \notag\\
			&\leq \frac{2\sigma|\beta||z_1 - z_2|}{\min\{|z_1|, |z_2|\}^{1-2 \sigma}} = 2\sigma|\beta|\frac{|z_1 - z_2|}{|z_2|^{1-2 \sigma}}.
	\end{align}
	Plugging \cref{eq:diff_f_sigma<1/2} into \cref{eq:diff_phi_B_2}, we get the desired result immediately.
\end{proof}

\subsection{Local truncation error}
{In this subsection, we shall prove the local truncation error estimates for the TSSP \cref{full_discretization_scheme} in 1D, which can be directly generalized to 2D and 3D.} {With the regularized function $ \fe$ introduced in Section 2.2, we can obtain $\sigma$ sensitive estimates as follows. }
\begin{lemma}\label{lem:fractional_error_1}
	Let $ \phi \in X_N $ such that $ \| \phi \|_{H^2} \leq M $ and let $ 0<\tau<1 $ and $ 0 < h < 1 $. Assume that $ V \in H^2(\Omega) $. When $ 0<\sigma\leq 1/2 $, we have
	\begin{align}
		&\| (I-e^{i \tau \Delta}) P_N B(\phi) \|_{L^2} \leq C_1(M, \| V \|_{H^2}) \tau^{1/2+\sigma}, \label{eq:free_schrodinger_error_1}\\
		&\| I_N B(\phi) - P_N B(\phi) \|_{L^2} \leq C_2(M, \| V \|_{H^2}) h^{1+2\sigma}. \label{eq:inter_error_1}
	\end{align}
\end{lemma}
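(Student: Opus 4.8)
The plan is to estimate each of the two quantities separately, in both cases using the regularized nonlinearity $\fe$ as an intermediary so that the extra smoothness from \cref{lem:Be} can be exploited, while \cref{eq:f-fe1} controls the regularization error.

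For the free-Schrödinger estimate \cref{eq:free_schrodinger_error_1}, I would split $B(\phi)$ into its regularized part and the remainder. Write $B(\phi) = B^\vep(\phi) + (B(\phi)-B^\vep(\phi))$, where $B^\vep(\phi) = -iV\phi - i\fe(|\phi|^2)\phi$ keeps the potential term and replaces $f$ by $\fe$. For the smooth part I would use the standard spectral bound $\|(I-e^{i\tau\Delta})g\|_{L^2} \lesssim \tau^{s/2}\|g\|_{H^s}$, valid for $0\le s\le 2$ since $e^{i\tau\Delta}$ is a Fourier multiplier with symbol $e^{-i\tau\mu_l^2}$ and $|1-e^{-i\tau\mu_l^2}| \le (\tau\mu_l^2)^{s/2}$. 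Applying this with $s = 1+2\sigma$ (which lies in $[1,2]$ exactly because $0<\sigma\le 1/2$) to $g = P_N B^\vep(\phi)$, and using $\|P_N\|_{H^s\to H^s}\le 1$ together with the $H^2$-bound \cref{eq:Be_H2}, gives a contribution of order $\tau^{1/2+\sigma}\,\vep^{-(1-2\sigma)}$ (the potential term $V\phi$ contributes only $\|V\phi\|_{H^{1+2\sigma}}$, controlled by $\|V\|_{H^2}$ and $\|\phi\|_{H^2}$). For the remainder $B(\phi)-B^\vep(\phi) = -i(f-\fe)(|\phi|^2)\phi$, I would bound it crudely: $\|(I-e^{i\tau\Delta})P_N(\cdot)\|_{L^2}\le 2\|P_N(\cdot)\|_{L^2}\lesssim \vep^{2\sigma}$ by \cref{eq:f-fe1}. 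Balancing the two contributions, $\tau^{1/2+\sigma}\vep^{-(1-2\sigma)} + \vep^{2\sigma}$, I would choose $\vep \sim \tau^{1/2}$, which makes both terms $\lesssim \tau^{1/2+\sigma}$ as required.

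For the interpolation-error estimate \cref{eq:inter_error_1}, the approach is parallel. The standard estimate $\|I_N g - P_N g\|_{L^2}\lesssim h^s\|g\|_{H^s}$ for $0\le s\le 2$ (the aliasing/interpolation error of the sine pseudospectral method) applied with $s=1+2\sigma$ to $g = B^\vep(\phi)$ yields $h^{1+2\sigma}\,\vep^{-(1-2\sigma)}$ via \cref{eq:Be_H2} again. For the unregularized remainder I would use $\|I_N g - P_N g\|_{L^2}\le \|I_N g\|_{L^2} + \|P_N g\|_{L^2}$, and since both $I_N$ and $P_N$ are $L^2$-stable on the relevant function (using that pointwise values of $(f-\fe)(|\phi|^2)\phi$ are controlled by \cref{eq:f-fe1}), the remainder is again $\lesssim \vep^{2\sigma}$. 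Choosing $\vep \sim h^{1/2}$ balances $h^{1+2\sigma}\vep^{-(1-2\sigma)} + \vep^{2\sigma}$ to $h^{1+2\sigma}$.

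I expect the main obstacle to be the fractional-order interpolation/aliasing estimate $\|I_N g - P_N g\|_{L^2}\lesssim h^s\|g\|_{H^s}$ at the non-integer exponent $s=1+2\sigma$, since the classical sine-pseudospectral error bounds are usually stated at integer regularity. I would obtain it by interpolating between the $s=1$ and $s=2$ cases (or proving the aliasing bound directly from the decay of sine coefficients and summing the aliased frequencies), which requires some care near the boundary to ensure the sine basis is the correct setting, i.e. that $g = B^\vep(\phi)$ retains the homogeneous Dirichlet structure. A secondary subtlety is justifying that the crude $L^2$ bound on the remainder $(f-\fe)(|\phi|^2)\phi$ together with the $\vep$-dependent $H^{1+2\sigma}$ bound on the regularized part can indeed be balanced by a single choice of $\vep$; this is precisely the gain the local regularization is designed to deliver, and keeping the power of $\vep$ uniform across both terms is the crux of the argument.
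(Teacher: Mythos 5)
Your overall strategy---split off the regularized nonlinearity, bound the smooth part with an $\vep$-dependent norm, bound the remainder via \cref{eq:f-fe1}, and balance $\vep$---is the same as the paper's, but the execution has a genuine gap: your remainder bound loses a crucial factor of $\vep$, and your balancing arithmetic does not give the claimed rates. On the set where $f\neq\fe$ one has $|\phi|<\vep$, so the remainder obeys the pointwise bound $|(f(|\phi|^2)-\fe(|\phi|^2))\phi|\lesssim \vep^{2\sigma}|\phi|\,\mathbbm{1}_{|\phi|<\vep}\leq\vep^{1+2\sigma}$; the paper exploits exactly this, getting $\|(f(|\phi|^2)-\fe(|\phi|^2))\phi\|_{L^2}\leq|\Omega|^{1/2}\vep^{1+2\sigma}$ and the same bound through $I_N$ via $\|I_N v\|_{L^2}\leq|\Omega|^{1/2}\|v\|_{l^\infty}$ (cf. \cref{eq:I_N_L2}). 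You only record $\vep^{2\sigma}$. With your terms $\tau^{1/2+\sigma}\vep^{-(1-2\sigma)}+\vep^{2\sigma}$ and your choice $\vep\sim\tau^{1/2}$, the two contributions are $\tau^{2\sigma}$ and $\tau^{\sigma}$, not $\tau^{1/2+\sigma}$: for every $0<\sigma\leq 1/2$ the remainder $\tau^{\sigma}$ is strictly larger than the target $\tau^{1/2+\sigma}$, and no other $\vep$ rescues it (optimizing your two terms yields $\tau^{\sigma+2\sigma^2}$, which reaches $\tau^{1/2+\sigma}$ only at $\sigma=1/2$). The same failure occurs in space: $h^{1+2\sigma}\vep^{-(1-2\sigma)}+\vep^{2\sigma}$ with $\vep\sim h^{1/2}$ gives $h^{1/2+3\sigma}+h^{\sigma}$, not $h^{1+2\sigma}$.

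The second, compounding issue is your use of the fractional rate $s=1+2\sigma$ on the regularized part. Since you control $\|\fe(|\phi|^2)\phi\|_{H^{1+2\sigma}}$ only through the $\vep$-dependent $H^2$ bound \cref{eq:Be_H2}, taking $s<2$ discards part of the $\tau$ (resp. $h$) gain while still paying the full $\vep^{-(1-2\sigma)}$: even after repairing the remainder to $\vep^{1+2\sigma}$, balancing $\tau^{1/2+\sigma}\vep^{-(1-2\sigma)}+\vep^{1+2\sigma}$ yields only $\tau^{(1+2\sigma)^2/4}$, strictly worse than $\tau^{1/2+\sigma}$ for all $\sigma<1/2$. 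The correct move---and the paper's---is to apply the plain integer-order estimates $\|(I-e^{i\tau\Delta})v\|_{L^2}\lesssim\tau\|v\|_{H^2}$ and $\|(I_N-P_N)v\|_{L^2}\lesssim h^2|v|_{H^2}$ from \cref{eq:proj_inter_error_L2,eq:linear_flow_error_L2} to the regularized part, so that the balance $\tau\vep^{-(1-2\sigma)}+\vep^{1+2\sigma}$ with $\vep\sim\tau^{1/2}$ (resp. $h^2\vep^{-(1-2\sigma)}+\vep^{1+2\sigma}$ with $\vep\sim h$, not $h^{1/2}$) produces exactly $\tau^{1/2+\sigma}$ and $h^{1+2\sigma}$. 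In particular, the fractional aliasing estimate $\|I_Ng-P_Ng\|_{L^2}\lesssim h^{s}\|g\|_{H^s}$ at non-integer $s$, which you flag as the main obstacle, is never needed: the fractional order comes entirely from the $\vep$-balance, not from fractional Sobolev machinery.
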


\begin{proof}
	Recalling the standard estimates that (see, e.g., \cite{book_spectral,bao2014,bao2019})
	\begin{align}
		&\| v - P_N v \|_{L^2} \lesssim h^2 |v|_{H^2}, \quad \| I_N v - P_N v \|_{L^2} \lesssim h^2 |v|_{H^2}, \label{eq:proj_inter_error_L2}\\
		&\| v - e^{i t \Delta} v \|_{L^2} \lesssim t \| v \|_{H^2}, \quad v \in H_0^1(\Omega) \cap H^2(\Omega), \label{eq:linear_flow_error_L2}
	\end{align}
	noting that $ H^2(\Omega) $ is an algebra when $ 1 \leq d \leq 3 $, we have
	\begin{equation}\label{Vphi_reg_e}
		\| (I-e^{i \tau \Delta}) (V \phi)  \|_{L^2} \lesssim \tau \| V \|_{H^2} \| \phi \|_{H^2}, \quad \| (I_N - P_N) (V \phi)  \|_{L^2} \lesssim h^2 \| V \|_{H^2} \| \phi \|_{H^2}.
	\end{equation}
	According to \cref{eq:AB}, it remains to show \cref{eq:free_schrodinger_error_1,eq:inter_error_1} with $ f(|\phi|^2)\phi $ replacing $ B(\phi) $. Using the regularized function $ \fe $ defined in \cref{eq:f_reg_def} with $ 0 < \vep \ll 1 $ and the triangle inequality, we have
	\begin{align}\label{f-fe+fe}
			&\| (I-e^{i \tau \Delta}) (f(|\phi|^2)\phi) \|_{L^2} \notag\\
			&\leq \| (I-e^{i \tau \Delta}) (f(|\phi|^2)\phi - \fe(|\phi|^2)\phi) \|_{L^2} + \| (I-e^{i \tau \Delta}) (\fe(|\phi|^2)\phi) \|_{L^2}.
	\end{align}
	From \cref{f-fe+fe}, using $ \| (I-e^{i \tau \Delta}) v \|_{L^2} \leq 2 \| v \|_{L^2} $ for the first term and \cref{eq:linear_flow_error_L2} for the second term, we have
	\begin{equation}\label{f_decomp}
		\| (I-e^{i \tau \Delta}) (f(|\phi|^2)\phi) \|_{L^2} \lesssim \| f(|\phi|^2)\phi - \fe(|\phi|^2)\phi \|_{L^2} + \tau \| \fe(|\phi|^2)\phi \|_{H^2}.
	\end{equation}
	By \cref{eq:f-fe1} and \cref{eq:Be_H2}, we have
	\begin{align}
		&\| f(|\phi|^2)\phi - \fe(|\phi|^2)\phi \|_{L^2} \lesssim \vep^{2\sigma} \| \phi \mathbbm{1}_{|\phi| < \vep} \|_{L^2} \leq |\Omega|^\frac{1}{2} \vep^{1+2\sigma}, \label{L2e_reg}\\
		&\| \fe(|\phi|^2)\phi \|_{H^2} \leq \frac{C(M)}{\vep^{1-2\sigma}}. \label{H2b}
	\end{align}
	Plugging \cref{L2e_reg,H2b} into \cref{f_decomp}, we have
	\begin{equation*}
		\| (I-e^{i \tau \Delta}) (f(|\phi|^2)\phi) \|_{L^2} \leq C(M)\inf_{0<\vep<1} \left(\vep^{1+2\sigma} + \frac{\tau}{\vep^{1-2\sigma}} \right) \leq C(M)\tau^{1/2+\sigma},
	\end{equation*}
	which combined with \cref{Vphi_reg_e} yields \cref{eq:free_schrodinger_error_1}.
	
	Then we shall prove \cref{eq:inter_error_1}. Similar to \cref{f-fe+fe,f_decomp}, using the triangle inequality, the $ L^2 $-projection property of $ P_N $, \cref{L2e_reg}, \cref{eq:proj_inter_error_L2} and \cref{eq:Be_H2}, we have
	\begin{align}\label{f_decomp2}
			&\| (I_N - P_N) (f(|\phi|^2)\phi) \|_{L^2} \notag\\
			&\leq \| (I_N - P_N) (f(|\phi|^2)\phi - \fe(|\phi|^2)\phi) \|_{L^2} + \| (I_N - P_N) (\fe(|\phi|^2)\phi) \|_{L^2} \notag\\
			&\leq \| I_N(f(|\phi|^2)\phi - \fe(|\phi|^2)\phi) \|_{L^2} + \| P_N(f(|\phi|^2)\phi - \fe(|\phi|^2)\phi) \|_{L^2} \notag\\
			&\quad + h^2 \| \fe(|\phi|^2)\phi \|_{H^2} \notag\\
			&\leq \| I_N(f(|\phi|^2)\phi - \fe(|\phi|^2)\phi) \|_{L^2} + |\Omega|^\frac{1}{2} \vep^{1+2\sigma} + h^2 \frac{C(M)}{\vep^{1-2\sigma}}.
	\end{align}
	By Parseval's identity,
	\begin{equation}\label{eq:I_N_L2}
		\| I_N v \|_{L^2} = \sqrt{ h\sum_{j=1}^{N-1} |v(x_j)|^2 } \leq \sqrt{ h\sum_{j=1}^{N-1} \| v \|_{l^\infty}^2 } \leq |\Omega|^\frac{1}{2} \| v \|_{l^\infty}, \quad v \in C_0(\overline{\Omega}),
	\end{equation}
	which implies, by using \cref{eq:f-fe1} again,
	\begin{align}\label{est_I_N_reg_error}
			\| I_N(f(|\phi|^2)\phi - \fe(|\phi|^2)\phi) \|_{L^2}
			&\leq |\Omega|^\frac{1}{2} \| (f(|\phi|^2) - \fe(|\phi|^2)) \phi \|_{l^\infty} \notag\\
			&\leq |\Omega|^\frac{1}{2} \vep^{2\sigma} \| \phi \mathbbm{1}_{|\phi| < \vep} \|_{l^\infty} \leq |\Omega|^\frac{1}{2} \vep^{1+2\sigma}.
	\end{align}
	Plugging \cref{est_I_N_reg_error} into \cref{f_decomp2}, we have
	\begin{equation*}
		\| (I_N - P_N) (f(|\phi|^2)\phi) \|_{L^2} \leq C(M) \inf_{0<\vep<1} \left(\vep^{1+2\sigma} + \frac{h^2}{\vep^{1-2\sigma}} \right) \leq C(M)h^{1+2\sigma},
	\end{equation*}
	which completes the proof.
\end{proof}

Now we are able to show the local truncation error of the TSSP method.
\begin{proposition}[local truncation error]\label{prop:local_error_sigma<1/2}
	Assume that $ V \in H^2(\Omega) $. Under the assumption \cref{A}, for $ 0 \leq k \leq T/\tau-1 $, we have
	\begin{equation*}
		\| P_N \psi(\cdot, t_{k+1}) - \Phi^\tau( P_N \psi(\cdot, t_k) ) \|_{L^2(\Omega)} \leq C(M_2) \tau \left( \tau^{\frac{1}{2}+\sigma} + h^{1+2\sigma} \right).
	\end{equation*}
\end{proposition}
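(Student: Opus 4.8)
The plan is to run the standard variation-of-constants (Duhamel) comparison between the exact flow and the Lie splitting, but to organize the telescoping so that exactly the two estimates of \cref{lem:fractional_error_1} carry the fractional orders $\tau^{1/2+\sigma}$ and $h^{1+2\sigma}$, while the remaining pieces are genuine ``variation'' terms of full order $\tau$ or $h^2$. Write $\phi_k:=P_N\psi(\cdot,t_k)$, so that $\phi_k\in X_N$ and $\|\phi_k\|_{H^2}\le M_2$ by the $H^2$-stability of $P_N$. Since $P_N$ commutes with $e^{it\Delta}$, Duhamel's formula for \cref{NLSE} gives
$$ P_N\psi(\cdot,t_{k+1}) = e^{i\tau\Delta}\phi_k + \int_0^\tau e^{i(\tau-s)\Delta}P_N B(\psi(\cdot,t_k+s))\,\rmd s. $$
On the numerical side, because $\Phi_B^s(\phi_k)$ solves $\partial_s w = B(w)$ and $I_N\phi_k=\phi_k$, the fundamental theorem of calculus yields
$$ \Phi^\tau(\phi_k) = e^{i\tau\Delta}I_N\Phi_B^\tau(\phi_k) = e^{i\tau\Delta}\phi_k + \int_0^\tau e^{i\tau\Delta}I_N B(\Phi_B^s(\phi_k))\,\rmd s, $$
so the $e^{i\tau\Delta}\phi_k$ terms cancel and the local error equals $\int_0^\tau \mathcal{R}(s)\,\rmd s$ with $\mathcal{R}(s)=e^{i(\tau-s)\Delta}P_N B(\psi(\cdot,t_k+s)) - e^{i\tau\Delta}I_N B(\Phi_B^s(\phi_k))$.

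Next I would insert and subtract frozen quantities to split $\mathcal{R}(s)=\mathcal{R}_1+\mathcal{R}_2+\mathcal{R}_3+\mathcal{R}_4$, where
\begin{align*}
 \mathcal{R}_1 &= \big(e^{i(\tau-s)\Delta}-e^{i\tau\Delta}\big)P_N B(\phi_k), & \mathcal{R}_2 &= e^{i(\tau-s)\Delta}P_N\big[B(\psi(\cdot,t_k+s))-B(\phi_k)\big],\\
 \mathcal{R}_3 &= e^{i\tau\Delta}(P_N-I_N)B(\phi_k), & \mathcal{R}_4 &= e^{i\tau\Delta}I_N\big[B(\phi_k)-B(\Phi_B^s(\phi_k))\big].
\end{align*}
The first two carry the splitting/trajectory information and the last two the interpolation/flow-freezing information. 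Using that $e^{it\Delta}$ is an $L^2$-isometry and $\|P_N\|_{L^2\to L^2}\le 1$: for $\mathcal{R}_1$, write $e^{i(\tau-s)\Delta}-e^{i\tau\Delta}=e^{i(\tau-s)\Delta}(I-e^{is\Delta})$ and apply \cref{eq:free_schrodinger_error_1} with $\tau$ replaced by $s\le\tau$ to get $\|\mathcal{R}_1\|_{L^2}\lesssim s^{1/2+\sigma}\le \tau^{1/2+\sigma}$; for $\mathcal{R}_3$, \cref{eq:inter_error_1} gives $\|\mathcal{R}_3\|_{L^2}\lesssim h^{1+2\sigma}$.

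For $\mathcal{R}_2$ I would use the $L^2$-Lipschitz bound \cref{lem:diff_B_L2} (valid since both arguments are $L^\infty$-bounded by $CM_2$ through $H^2\hookrightarrow L^\infty$), together with $\|\psi(\cdot,t_k+s)-\psi(\cdot,t_k)\|_{L^2}\le s\|\partial_t\psi\|_{L^\infty([0,T];L^2)}\lesssim\tau$ from \cref{A} and the projection error \cref{eq:proj_inter_error_L2}, to obtain $\|\mathcal{R}_2\|_{L^2}\lesssim \tau+h^2$. For $\mathcal{R}_4$ the key point is that $\Phi_B^s$ preserves the modulus (by \cref{eq:phi_B_def,eq:phi_B_2_def}), hence $f(|\phi_k|^2)=f(|\Phi_B^s(\phi_k)|^2)$ pointwise and $B(\phi_k)-B(\Phi_B^s(\phi_k))=-i(V+f(|\phi_k|^2))(\phi_k-\Phi_B^s(\phi_k))$; bounding $|\phi_k-\Phi_B^s(\phi_k)|\le s\,|\phi_k|\,|V+f(|\phi_k|^2)|\lesssim s$ pointwise and using $\|I_N\,\cdot\,\|_{L^2}\le|\Omega|^{1/2}\|\cdot\|_{l^\infty}$ from \cref{eq:I_N_L2} gives $\|\mathcal{R}_4\|_{L^2}\lesssim \tau$. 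Crucially, for $0<\sigma\le 1/2$ and $\tau,h<1$ one has $\tau\le\tau^{1/2+\sigma}$ and $h^2\le h^{1+2\sigma}$, so the full-order remainders $\mathcal{R}_2,\mathcal{R}_4$ are absorbed into $\tau^{1/2+\sigma}+h^{1+2\sigma}$. Summing the four bounds and integrating $\int_0^\tau\rmd s$ produces the extra factor $\tau$ and yields the claim, with constant depending only on $M_2$ (note $\|V\|_{H^2}\le M_2$).

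The main obstacle is not the bookkeeping but the term $\mathcal{R}_1$: it is precisely the low-regularity temporal local error $\|(I-e^{is\Delta})P_N B(\phi_k)\|_{L^2}$, and the only way to beat the trivial $O(\tau)$ bound — obtaining the sharp fractional $O(\tau^{1/2+\sigma})$ — is through the regularized nonlinearity $\fe$ and the $\sigma$-sensitive estimates behind \cref{lem:fractional_error_1} (equivalently \cref{lem:prop_fe,lem:Be}). Everything else is designed so that this single temporal gain, together with the matching spatial gain in $\mathcal{R}_3$, is not spoiled; in particular one must verify the $L^\infty$-bounds needed for \cref{lem:diff_B_L2} and the modulus-invariance exploited in $\mathcal{R}_4$, both of which are routine given \cref{A}.
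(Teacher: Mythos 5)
Your proposal is correct, and it takes a genuinely different (and lighter) route than the paper. The paper expands both flows to \emph{second} order: it iterates the variation-of-constants formula for the exact solution and Taylor-expands $\Phi_B^\tau$ with an integral remainder, producing two $O(\tau^2)$ remainders $e_1,e_2$ controlled via the G\^ateaux-derivative bounds (\cref{lem:dB_L2}), plus a principal term $e_3$ split into four pieces $e_3^1,\dots,e_3^4$ that roughly parallel your $\mathcal{R}_1,\dots,\mathcal{R}_4$. You stay at \emph{first} order on both sides: your $\mathcal{R}_2$ replaces the paper's $e_3^1$ (which compares $B(e^{is\Delta}\vini)$ with $B(\vini)$ via the free-flow error \cref{eq:linear_flow_error_L2}) by a direct comparison of $B(\psi(\cdot,t_k+s))$ with $B(\phi_k)$ using $\partial_t\psi\in L^\infty([0,T];L^2)$ from \cref{A}, and your $\mathcal{R}_4$ replaces the paper's Taylor remainder $e_2$ by exploiting that $\Phi_B^s$ preserves the modulus, so the difference of nonlinear terms is exactly linear in $\phi_k-\Phi_B^s(\phi_k)$ --- a clean pointwise argument needing only $\|I_N\,\cdot\,\|_{L^2}\le|\Omega|^{1/2}\|\cdot\|_{l^\infty}$ from \cref{eq:I_N_L2}. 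Both proofs hinge on the same two ingredients, the Lipschitz bound \cref{lem:diff_B_L2} and, crucially, the regularization-based fractional estimates of \cref{lem:fractional_error_1}, and both must absorb full-order pieces via $\tau\le\tau^{1/2+\sigma}$, $h^2\le h^{1+2\sigma}$ for $0<\sigma\le 1/2$ (the paper does this too, since its $e_3^1$, $e_3^2$ are only $O(\tau)$, $O(h^2)$ per slice). What you gain: no iterated Duhamel, no Taylor remainder, no need for \cref{lem:dB_L2} at all, and your freezing errors still integrate to $O(\tau^2+\tau h^2)$, within budget even before absorption. What the paper's heavier skeleton buys: the $e_1,e_2,e_3$ structure with genuinely quadratic remainders is reused verbatim in Section 4 for \cref{prop:local_error_sigma>1/2,prop:local_error_H3}, where the $H^1$-norm local errors require it, so its cost is amortized. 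Two small points you should make explicit in a full write-up: $I_N$ commutes with the $s$-integral by linearity, and $B(\Phi_B^s(\phi_k))$ is continuous on $\overline{\Omega}$ and vanishes on $\partial\Omega$ (using $V\in H^2\hookrightarrow C(\overline{\Omega})$ for $d\le 3$ and $\phi_k\in X_N$), so the interpolation of the integrand and the application of \cref{eq:I_N_L2} are legitimate; and \cref{eq:free_schrodinger_error_1} is applied with $s\in(0,\tau]$ in place of $\tau$, which its proof permits since only $0<s<1$ is used.
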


\begin{proof}
	For the simplicity of notations, we define $ v(t) = \psi(t_k + t) := \psi(\cdot, t_k + t) $ for $ 0 \leq t \leq \tau $ and $ \vini := v(0) = \psi(t_k) $. By Sobolev embedding {$H^2 \hookrightarrow L^\infty$}, noting the boundedness of $ e^{it\Delta} $ and $ P_N $, we have
	\begin{align}
		&\| e^{i s \Delta} v(t) \|_{L^\infty} \lesssim \| e^{i s \Delta} v(t) \|_{H^2} = \| v(t) \|_{H^2} \leq M_2, \label{Linfty_bound_expit}\\
		&\| P_N v(t) \|_{L^\infty} \lesssim \| P_N v(t) \|_{H^2} \leq \| v(t) \|_{H^2} \leq M_2, \quad 0 \leq s, t \leq \tau. \label{Linfty_bound_P_N}
	\end{align}
 	{By variation of constant formula (see (4.24)-(4.25) in \cite{bao2022})}
	\begin{align}\label{eq:duhamel}
		\psi(t_{k+1})
		&= e^{i \tau \Delta} \vini + \int_0^\tau e^{i (\tau - s )\Delta} B(e^{is \Delta} \vini) \rmd s \notag\\
		&\quad + \int_0^\tau \int_0^s e^{i (\tau - s) \Delta} dB(e^{i(s-\sigma)\Delta}v(\sigma)) [e^{i(s - \sigma)\Delta} B(v(\sigma))] \rmd \sigma \rmd s,
	\end{align}
	where $ dB(\cdot)[\cdot] $ is the G\^ateaux derivative defined in \cref{gateaux}. Applying $ P_N $ on both sides of \cref{eq:duhamel}, {noting that $e^{i\tau\Delta}$ and $P_N$ commute \cite{review_2013}}, one gets
	\begin{align}
		P_N \psi(t_{k+1})
		&=e^{i \tau \Delta} P_N \vini + \int_0^\tau e^{i (\tau - s )\Delta} P_N B(e^{is \Delta} \vini) \rmd s \notag\\
		&\quad + \int_0^\tau \int_0^s e^{i (\tau - s) \Delta} P_N\left( dB(e^{i(s-\sigma)\Delta}v(\sigma)) [e^{i(s - \sigma)\Delta} B(v(\sigma))] \right) \rmd \sigma \rmd s. \label{localerror1}
	\end{align}
	From \cref{eq:phi_B_def}, recalling that $ \vini = \psi(t_k) $, we have
	\begin{equation}\label{phi_tau}
		\Phi^\tau(P_N \psi(t_k)) = e^{i \tau \Delta} I_N \Phi_B^\tau(P_N \vini).
	\end{equation}
	Applying the first-order Taylor expansion ({see proof of Theorem 4.2 in \cite{bao2022}})
	\begin{equation}
		\Phi_B^\tau(w) = w + \tau B(w) + \tau^2 \int_0^1 (1-\theta) dB(\Phi_B^{\theta \tau}(w)) [B(\Phi_B^{\theta \tau}(w))] \rmd \theta
	\end{equation}
	for $ w = P_N \vini $ and plugging it into \cref{phi_tau}, we have
	\begin{align}
		\Phi^\tau(P_N \psi(t_k))
		&=e^{i \tau \Delta} P_N \vini + \tau e^{i \tau \Delta} I_N B(P_N \vini)\nonumber \\
		&\quad + \tau^2 e^{i \tau \Delta} I_N \left(\int_0^1 (1-\theta)  \left( dB(\Phi_B^{\theta \tau}(P_N \vini)) [B(\Phi_B^{\theta \tau}(P_N \vini))] \right) \rmd \theta \right). \label{localerror2}
	\end{align}
	Subtracting \eqref{localerror2} from \eqref{localerror1}, we have
	\begin{equation}\label{eq:error_decomp}
		P_N \psi(t_{k+1}) - \Phi^\tau( P_N \psi(t_k) ) = e_1 - e_2 + e_3,
	\end{equation}
	where
	\begin{align}
		&e_1 =  \int_0^\tau \int_0^s e^{i (\tau - s) \Delta} P_N\left( dB(e^{i(s-\sigma)\Delta}v(\sigma)) [e^{i(s - \sigma)\Delta} B(v(\sigma))] \right) \rmd \sigma \rmd s, \label{e1} \\
		&e_2 =  \tau^2 e^{i \tau \Delta} I_N \left(\int_0^1 (1-\theta) \left( dB(\Phi_B^{\theta \tau}(P_N \vini)) [B(\Phi_B^{\theta \tau}(P_N \vini))] \right) \rmd \theta \right), \label{e2}\\
		&e_3 = \int_0^\tau e^{i (\tau - s )\Delta} P_N B(e^{is \Delta} \vini) \rmd s - \tau e^{i \tau \Delta} I_N B(P_N \vini). \label{e3}
	\end{align}
	
 	Next, we shall first estimate $ e_1 $ and $ e_2 $. Noticing the property of $ e^{it\Delta} $ and $ P_N $, using \cref{lem:dB_L2} and \cref{Linfty_bound_expit}, we have
 	\begin{align}\label{e_1_dB_L2}
 		&\left\| e^{i (\tau - s) \Delta} P_N \left( dB(e^{i(s-\sigma)\Delta}v(\sigma)) [e^{i(s - \sigma)\Delta} B(v(\sigma))] \right) \right\|_{L^2} \notag\\
 		&\leq \| dB(e^{i(s-\sigma)\Delta}v(\sigma)) [e^{i(s - \sigma)\Delta} B(v(\sigma))] \|_{L^2} \notag\\
 		&\leq C(\| V \|_{L^\infty}, \| e^{i(s-\sigma)\Delta}v(\sigma) \|_{L^\infty}) \| e^{i(s - \sigma)\Delta} B(v(\sigma)) \|_{L^2} \notag\\
 		&\leq C(M_2) \| B(v(\sigma)) \|_{L^2}.
 	\end{align}
 	From \cref{e1}, using \cref{e_1_dB_L2} and \cref{eq:B_L2}, we get
	\begin{align}\label{e1_L2_1}
			\| e_1 \|_{L^2}
			&\leq \int_0^\tau \int_0^s \left\| e^{i (\tau - s) \Delta} P_N \left( dB(e^{i(s-\sigma)\Delta}v(\sigma)) [e^{i(s - \sigma)\Delta} B(v(\sigma))] \right) \right\|_{L^2} \rmd \sigma \rmd s \notag\\
			&\leq C(M_2) \int_0^\tau \int_0^s \| B(v(\sigma)) \|_{L^2} \rmd \sigma \rmd s \leq C(M_2) \tau^2 \max_{0 \leq \sigma \leq \tau} \| B(v(\sigma)) \|_{L^2} \notag\\
			&\leq C(M_2) \tau^2 C(M_2) \max_{0 \leq \sigma \leq \tau} \| v(\sigma) \|_{L^2} \leq C(M_2) \tau^2.
	\end{align}
	From \cref{eq:phi_B_def} and \cref{eq:AB}, using \cref{Linfty_bound_P_N}, one gets,
	\begin{equation}\label{Linfty_Phi_B}
		\begin{aligned}
			&\|\Phi_B^{\theta \tau}(P_N \vini) \|_{L^\infty} = \| P_N \vini \|_{L^\infty} \leq C(M_2), \qquad 0 \leq \theta \leq 1, \\
			&\| B(\Phi_B^{\theta \tau}(P_N \vini)) \|_{L^\infty} \leq C(\| V \|_{L^\infty}, \|P_N \vini\|_{L^\infty}) \|P_N \vini\|_{L^\infty}\leq C(M_2).
		\end{aligned}
	\end{equation}
	From \cref{gateaux-new}, noticing \cref{eq:bound_G}, one easily gets
	\begin{equation}
		\| dB(w_1)[w_2] \|_{L^\infty} \leq C(\| V \|_{L^\infty}, \| w_1 \|_{L^\infty}) \| w_2 \|_{L^\infty}, \quad w_1, w_2 \in L^\infty(\Omega),
	\end{equation}
	which combined with \cref{eq:I_N_L2} and \cref{Linfty_Phi_B}, yields the estimate for $ e_2 $ in \cref{e2} as
	\begin{align}\label{e2_L2_1}
			\| e_2 \|_{L^2}
			&\leq \tau^2 \left \| I_N \left(\int_0^1 (1-\theta) \left( dB(\Phi_B^{\theta \tau}(P_N \vini)) [B(\Phi_B^{\theta \tau}(P_N \vini))] \right) \rmd \theta \right) \right \|_{L^2} \notag \\
			&\leq \tau^2 |\Omega|^\frac{1}{2} \max_{0 \leq \theta \leq 1} \left \| dB(\Phi_B^{\theta \tau}(P_N \vini))[B(\Phi_B^{\theta \tau}(P_N \vini))] \right \|_{l^\infty} \notag \\
			&\leq \tau^2 |\Omega|^\frac{1}{2} C\left (\| V \|_{L^\infty}, \|\Phi_B^{\theta \tau}(P_N \vini) \|_{L^\infty} \right ) \| B(\Phi_B^{\theta \tau}(P_N \vini)) \|_{L^\infty} \notag \\
			&\leq C(M_2) \tau^2.
	\end{align}

	Then we shall estimate $ e_3 $ in \cref{e3}, which can be written as
	\begin{equation*}
		e_3 = \int_0^\tau \left[ e^{i (\tau - s )\Delta} P_N B(e^{is \Delta} \vini) - e^{i \tau \Delta} I_N B(P_N \vini) \right] \rmd s,
	\end{equation*}
	which yields
	\begin{equation}\label{e3_est_1}
		\| e_3 \|_{L^2} \leq \tau \max_{0 \leq s \leq \tau} \| e^{i (\tau - s )\Delta} P_N B(e^{is \Delta} \vini) - e^{i \tau \Delta} I_N B(P_N \vini) \|_{L^2}.
	\end{equation}
	Using standard properties of $ e^{i t \Delta} $ and $ P_N $, one gets
	\begin{align}\label{e3_decomp_1}
			&\| e^{i (\tau - s )\Delta} P_N B(e^{is \Delta} \vini) - e^{i \tau \Delta} I_N B(P_N \vini) \|_{L^2} \notag\\
			&=\| P_N B(e^{is \Delta} \vini) - e^{i s \Delta} I_N B(P_N \vini) \|_{L^2} \notag\\
			&\leq \| P_N B(e^{is \Delta} \vini) - P_N B(\vini) \|_{L^2} + \| P_N B(\vini) - P_N B(P_N\vini) \|_{L^2} \notag\\
			&\quad+ \| P_N B(P_N\vini) - e^{is\Delta} P_N B(P_N\vini) \|_{L^2} \notag\\
			&\quad+ \| e^{is\Delta} P_N B(P_N\vini) - e^{i s \Delta} I_N B(P_N \vini) \|_{L^2} \notag\\
			&\leq \| B(e^{is \Delta} \vini) - B(\vini) \|_{L^2} + \| B(\vini) - B(P_N\vini) \|_{L^2} \notag\\
			&\quad+ \| (I-e^{is\Delta}) P_N B(P_N\vini) \|_{L^2} + \| (P_N - I_N) B(P_N\vini) \|_{L^2} \notag\\
			&=: \| e_3^1 \|_{L^2} + \| e_3^2 \|_{L^2} + \| e_3^3 \|_{L^2} + \| e_3^4 \|_{L^2}.
	\end{align}
	For $ e_3^1 $ and $ e_3^2 $ in \cref{e3_decomp_1}, using \cref{lem:diff_B_L2}, recalling \cref{eq:proj_inter_error_L2,eq:linear_flow_error_L2,Linfty_bound_expit,Linfty_bound_P_N}, we obtain
	\begin{equation}\label{e3_12}
		\begin{aligned}
			&\| e_3^1 \|_{L^2} = \| B(e^{is \Delta} \vini) - B(\vini) \|_{L^2} \leq C(M_2) \| (I-e^{is \Delta}) \vini \|_{L^2} \leq C(M_2) \tau, \\
			&\| e_3^2 \|_{L^2} = \| B(\vini) - B(P_N\vini) \|_{L^2} \leq C(M_2) \| \vini - P_N \vini \|_{L^2} \leq C(M_2) h^2.
		\end{aligned}
	\end{equation}
	For $ e_3^3 $ and $ e_3^4 $ in \cref{e3_decomp_1}, using \cref{lem:fractional_error_1}, we get
	\begin{equation}\label{e3_34}
		\begin{aligned}
			&\| e_3^3 \|_{L^2} = \| (I-e^{is\Delta}) P_N B(P_N\vini) \|_{L^2} \leq C(M_2) \tau^\frac{1+2\sigma}{2}, \\
			&\| e_3^4 \|_{L^2} = \| (I_N-P_N) B(P_N\vini) \|_{L^2} \leq C(M_2) h^{1+2\sigma}.
		\end{aligned}
	\end{equation}
	Plugging \cref{e3_12,e3_34} into \cref{e3_decomp_1}, and noticing \cref{e3_est_1}, we get
	\begin{equation}\label{e3_L2_1}
		\| e_3 \|_{L^2} \leq C(M_2) \tau \left( \tau^\frac{1+2\sigma}{2} + h^{1+2\sigma} \right).
	\end{equation}
	Combing \cref{e1_L2_1,e2_L2_1,e3_L2_1}, and noting \cref{eq:error_decomp}, we get the desired result.
\end{proof}

\begin{remark}\label{rem:local_error_1D}
	The proof of \cref{prop:local_error_sigma<1/2} can be generalized to 2D and 3D directly. Moreover, in 1D, under much weaker assumption that $ V \in H^1(\Omega) $ and $ \psi \in C([0, T]; H_0^1(\Omega)) $, by using Sobolev embedding $ H^1 \hookrightarrow L^\infty $ and the estimates (see, e.g., \cite{bao2019,bao2022})
	\begin{equation}
		\| v - e^{it\Delta} v \|_{L^2} \lesssim \sqrt{\tau} \| v \|_{H^1}, \quad \| v - P_N v \|_{L^2} \lesssim h |v|_{H^1}, \qquad v \in H_0^1(\Omega),
	\end{equation}
	and following the proof of \cref{prop:local_error_sigma<1/2}, we can obtain
	\begin{equation}\label{eq:local_error_1D}
		\| P_N \psi(t_{k+1}) - \Phi^\tau( P_N \psi(t_k) ) \|_{L^2(\Omega)} \leq C \tau \left( \sqrt{\tau} + h \right),
	\end{equation}
	where $ C $ depends on $ \| V \|_{H^1} $ and $ \| \psi \|_{L^\infty([0, T]; H^1)} $.
\end{remark}

\subsection{Unconditional $ L^2 $-stability and proof of \cref{thm:sigma<1/2}}
We shall show the unconditional $ L^2 $-stability of the numerical flow by using \cref{lem:diff_Phi_B_2_sigma<1/2}. With the estimate of the local truncation error and the unconditional $ L^2 $-stability of the numerical flow, we are able to obtain the error estimates.

\begin{proposition}[unconditional $ L^2 $-stability]\label{prop:stability of Phi_AB_sigma<1/2}
	Let $ v \in X_N $ and $ w \in X_N $ such that $ \min\{\| v \|_{L^\infty}, \| w \|_{L^\infty}\} \leq M $. When $ 0 < \sigma \leq 1/2 $, we have
	\begin{equation*}
		\| \Phi^\tau(v) - \Phi^\tau(w) \|_{L^2} \leq  (1 + C(M) \tau) \| v - w \|_{L^2},
	\end{equation*}
	where {$\Phi^\tau$ is defined in \cref{eq:phitau_def}} and $ C(M) \sim M^{2\sigma} $.
\end{proposition}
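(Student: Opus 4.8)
The plan is to exploit the factorized structure $\Phi^\tau = e^{i\tau\Delta}\circ I_N \circ \Phi_B^\tau$ from \cref{eq:phitau_def} and to strip off the three operators one at a time, reducing everything to the pointwise Lipschitz bound in \cref{lem:diff_Phi_B_2_sigma<1/2}. First I would use that $e^{i\tau\Delta}$ maps $X_N$ into itself and acts on each sine mode $\sin(\mu_l(x-a))$ through the unimodular multiplier $e^{-i\tau\mu_l^2}$; hence it preserves the $L^2$-norm on $X_N$, and by linearity of $I_N$,
\begin{equation*}
\| \Phi^\tau(v) - \Phi^\tau(w) \|_{L^2} = \| I_N \Phi_B^\tau(v) - I_N \Phi_B^\tau(w) \|_{L^2} = \| I_N\left(\Phi_B^\tau(v) - \Phi_B^\tau(w)\right) \|_{L^2}.
\end{equation*}
This removes the free Schr\"odinger propagator at no cost in the constant.

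Next I would convert the $L^2$-norm of the interpolant into a discrete sum over the grid. By the Parseval-type identity \cref{eq:I_N_L2}, $\| I_N g \|_{L^2}^2 = h\sum_{j=1}^{N-1}|g(x_j)|^2$, so it suffices to control the pointwise differences $|\Phi_B^\tau(v)(x_j) - \Phi_B^\tau(w)(x_j)|$. Recalling $\Phi_B^\tau(v)(x_j) = e^{-i\tau V(x_j)}\,\Phi_{B_2}^\tau(v(x_j))$ from \cref{eq:phi_B_def,eq:phi_B_2_def} and that $|e^{-i\tau V(x_j)}|=1$, the potential factor cancels and
\begin{equation*}
|\Phi_B^\tau(v)(x_j) - \Phi_B^\tau(w)(x_j)| = |\Phi_{B_2}^\tau(v(x_j)) - \Phi_{B_2}^\tau(w(x_j))|.
\end{equation*}
Applying \cref{lem:diff_Phi_B_2_sigma<1/2} with $z_1 = v(x_j)$ and $z_2 = w(x_j)$ bounds this by $(1+C_j\tau)|v(x_j)-w(x_j)|$ with $C_j = 2\sigma|\beta|\min\{|v(x_j)|,|w(x_j)|\}^{2\sigma}$.

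The key step -- and the reason the estimate is \emph{unconditional} -- is to bound $C_j$ uniformly using only the hypothesis $\min\{\|v\|_{L^\infty},\|w\|_{L^\infty}\}\le M$. Indeed, whichever of the two functions has $L^\infty$-norm at most $M$ has values at every grid point bounded by $M$, so $\min\{|v(x_j)|,|w(x_j)|\}\le M$ for \emph{all} $j$, no matter how large the other function is. Here it is essential that \cref{lem:diff_Phi_B_2_sigma<1/2} carries $\min\{|z_1|,|z_2|\}$ rather than $\max\{|z_1|,|z_2|\}$ in its constant: this is exactly what lets us dispense with any a priori bound on the numerical solution. Hence $C_j \le 2\sigma|\beta|M^{2\sigma} =: C(M)$ for every $j$.

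Finally I would sum the pointwise bounds,
\begin{equation*}
\| I_N\left(\Phi_B^\tau(v) - \Phi_B^\tau(w)\right) \|_{L^2}^2 \le (1+C(M)\tau)^2\, h\sum_{j=1}^{N-1}|v(x_j)-w(x_j)|^2,
\end{equation*}
using \cref{eq:I_N_L2} on the left. For the right-hand side, since $v-w\in X_N$ the operator $I_N$ acts as the identity on it, so a second application of \cref{eq:I_N_L2} gives $h\sum_{j}|v(x_j)-w(x_j)|^2 = \|I_N(v-w)\|_{L^2}^2 = \|v-w\|_{L^2}^2$. Taking square roots yields the claim with $C(M)\sim M^{2\sigma}$. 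The computation is otherwise mechanical; the only genuinely delicate point is the $\min$-exploitation described above, which is where both the \emph{unconditional} nature of the stability estimate and its restriction to $0<\sigma\le 1/2$ (through \cref{lem:diff_Phi_B_2_sigma<1/2}) reside -- for $\sigma>1/2$ the analogous Lipschitz constant would instead involve $\max\{|z_1|,|z_2|\}$ and force an a priori control of both arguments.
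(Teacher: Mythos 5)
Your proposal is correct and follows essentially the same route as the paper's proof: it strips off $e^{i\tau\Delta}$ by $L^2$-isometry on $X_N$, reduces via the Parseval identity \cref{eq:I_N_L2} to the grid values, cancels the unimodular phase $e^{-i\tau V(x_j)}$, applies \cref{lem:diff_Phi_B_2_sigma<1/2} pointwise, and sums back using that $I_N$ is the identity on $X_N$. Your explicit observation that $\min\{|v(x_j)|,|w(x_j)|\}\le M$ holds at every grid point under the hypothesis $\min\{\|v\|_{L^\infty},\|w\|_{L^\infty}\}\le M$ -- and that this $\min$ (rather than $\max$) structure is what makes the stability unconditional and confines it to $0<\sigma\le 1/2$ -- is precisely the mechanism the paper uses, merely spelled out more explicitly than in the original.
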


\begin{proof}
	Recalling \cref{eq:phitau_def}, noting that $ e^{i \tau \Delta} $ preserves the $ L^2 $ norm, one gets
	\begin{align}\label{reduce_L2_stability}
		\| \Phi^\tau(v) - \Phi^\tau(w) \|_{L^2}
		&= \| e^{i \tau \Delta} I_N \Phi_B^\tau (v) -  e^{i \tau \Delta} I_N \Phi_B^\tau (w) \|_{L^2} \notag\\
		&= \| I_N \Phi_B^\tau (v) - I_N \Phi_B^\tau (w) \|_{L^2}.
	\end{align}
	From \cref{reduce_L2_stability}, by \cref{eq:I_N_L2,lem:diff_Phi_B_2_sigma<1/2}, noting that $ I_N $ is an identity on $ X_N $, and recalling \cref{eq:phi_B_def}, we have
	\begin{align}\label{eq:stability_L2_proof}
			&\| I_N \Phi_B^\tau (v) - I_N \Phi_B^\tau (w) \|_{L^2}^2 \notag\\
			&= h \sum_{j=1}^{N-1} | \Phi_B^\tau(v)(x_j) - \Phi_B^\tau(w)(x_j) |^2 \notag\\
			&= h \sum_{j=1}^{N-1} \left| e^{-i\tau V(x_j)} \Phi_{B_2}^\tau(v)(x_j) - e^{-i\tau V(x_j)} \Phi_{B_2}^\tau(w)(x_j) \right|^2 \notag\\
			&= h \sum_{j=1}^{N-1} \left| \Phi_{B_2}^\tau(v)(x_j) - \Phi_{B_2}^\tau(w)(x_j) \right|^2 \notag\\
			&\leq (1 + C(M) \tau)^2  h \sum_{j=1}^{N-1} \left| v(x_j) - w(x_j) \right|^2 \notag\\
			&= (1 + C(M) \tau)^2 \| I_N v -I_N w \|_{L^2}^2 \notag\\
			&= (1 + C(M) \tau)^2 \| v - w \|_{L^2}^2.
	\end{align}
The proof is completed.
\end{proof}

\begin{remark}
	In the error estimates, $ v $ and $ w $ in \cref{prop:stability of Phi_AB_sigma<1/2} are related to the exact solution and the numerical solution, respectively. Hence, to control the constant $ C(M) $ in \cref{prop:stability of Phi_AB_sigma<1/2},  we can assume bound of the exact solution and thus get rid of the a priori estimate of the numerical solution, which explains why \cref{prop:stability of Phi_AB_sigma<1/2} is called the unconditional $ L^2 $-stability.
\end{remark}

\begin{proof}[Proof of \cref{thm:sigma<1/2}]
	Under the assumption \cref{A}, using \cref{eq:proj_inter_error_L2}, one gets
	\begin{equation}
		\| \psi(\cdot, t_k) - P_N \psi(\cdot, t_k) \|_{L^2} \leq C(M_2) h^2.
	\end{equation}
	Hence, it suffices to estimate $ e^k := I_N \psi^k - P_N \psi(\cdot, t_k) \in X_N $ for $ 0 \leq k \leq T/\tau $. By \cref{recurrence_relation}, for $ 0 \leq k \leq T/\tau - 1 $, one has
	\begin{align}\label{eq:error_propagation}
			\| e^{k+1} \|_{L^2}
			&= \| I_N \psi^{k+1} - P_N \psi(\cdot, t_{k+1}) \|_{L^2} = \| \Phi^\tau (I_N \psi^{k}) - P_N \psi(\cdot, t_{k+1}) \|_{L^2} \notag\\
			&\leq \| \Phi^\tau (I_N \psi^{k}) - \Phi^\tau (P_N \psi(\cdot, t_k)) \|_{L^2} + \| \Phi^\tau (P_N \psi(\cdot, t_k)) - P_N \psi(\cdot, t_{k+1}) \|_{L^2}.
	\end{align}
	By \cref{prop:stability of Phi_AB_sigma<1/2,prop:local_error_sigma<1/2}, noting that $ \| P_N \psi(\cdot, t_k) \|_{L^\infty} \lesssim \| P_N \psi(\cdot, t_k) \|_{H^2} \leq \| \psi(\cdot, t_k) \|_{H^2} \leq M_2 $, one has
	\begin{equation*}
		\| e^k \|_{L^2(\Omega)} \leq {(1+C(M_2) \tau)} \| e^{k-1} \|_{L^2(\Omega)} + C(M_2)  \tau \left( \tau^{1/2+\sigma} + h^{1+2\sigma} \right), \quad 1 \leq k \leq T/\tau.
	\end{equation*}
	It follows from the discrete Gronwall's inequality and $ \| e^0 \|_{L^2} = \| I_N \psi_0 - P_N \psi_0 \| \leq C(M_2) h^2 $ that
	\begin{equation*}
		\| e^k \|_{L^2(\Omega)} \leq C(T, M_2) \left( \tau^\frac{1+2\sigma}{2} + h^{1+2\sigma} \right), \quad 0 \leq k \leq T/\tau,
	\end{equation*}
	which completes the proof.
\end{proof}

The proof of \cref{cor:1D} follows the proof of \cref{thm:sigma<1/2} by replacing \cref{prop:local_error_sigma<1/2} with \cref{eq:local_error_1D} and we shall omit it for brevity.

\section{Proof of \cref{thm:sigma>1/2H2} for the case $ \sigma \geq 1/2 $}
In this section, we assume that $ V \in H^2(\Omega) \cap W^{1, \infty}(\Omega) $, $ \sigma \geq 1/2 $ and the assumption \cref{A}. The assumption $ V \in W^{1, \infty}(\Omega) $ is only used in \cref{prop:stability of Phi_AB_sigma>1/2} and can be obtained from $ V \in H^2(\Omega) $ in 1D or $ V \in H^3(\Omega) $ in 2D and 3D. {Also, we shall use the equivalent norm $\| \nabla \cdot \|_{L^2}$ on $H_0^1(\Omega)$ to avoid frequent use of Poincar\'e inequality. }

\subsection{Some estimates for the operator B}
\begin{lemma}\label{lem:B_2}
	Let $ v \in H^2(\Omega) $ such that $ \| v \|_{H^2} \leq M $. When $ \sigma \geq 1/2 $, we have
	\begin{equation*}
		\| \B{v} \|_{H^2(\Omega)} \leq  C(M, \| V \|_{H^2}).
	\end{equation*}
\end{lemma}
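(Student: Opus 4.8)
The plan is to decompose $B(v) = B_1(v) + B_2(v) = -iVv - if(|v|^2)v$ as in \cref{eq:AB} and estimate the two pieces in $H^2$ separately. For the linear part $B_1(v) = -iVv$, since $H^2(\Omega)$ is a Banach algebra for $1 \le d \le 3$, one immediately gets $\|Vv\|_{H^2} \lesssim \|V\|_{H^2}\|v\|_{H^2} \le C(M, \|V\|_{H^2})$, so the whole difficulty is concentrated in the nonlinear part $B_2(v) = -if(|v|^2)v$.

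For $B_2(v)$, the $L^2$- and $H^1$-contributions are handled exactly as in the proof of \cref{lem:B_1}: \cref{eq:bound_G} gives $\|f(|v|^2)v\|_{L^2} \lesssim \|v\|_{L^\infty}^{2\sigma}\|v\|_{L^2}$, and the gradient identity \cref{eq:partial_B} gives $\|\nabla(f(|v|^2)v)\|_{L^2} \lesssim \|v\|_{L^\infty}^{2\sigma}\|\nabla v\|_{L^2}$, both bounded by $C(M)$ since $H^2 \hookrightarrow L^\infty$. The new ingredient is the second derivative, which I would compute exactly as in the derivation of \cref{parparBe}, writing $\partial_{jk}(f(|v|^2)v)$ as the same five-term sum. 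After absorbing the factor $|\partial_j|v|^2| \le 2|v|\,|\partial_j v|$, the scalar coefficients appearing are, up to constants, $f(\rho)+\rho f'(\rho)$ and $\sqrt{\rho}\,f'(\rho)+\rho^{3/2}f''(\rho)$ with $\rho=|v|^2$. This is precisely where the hypothesis $\sigma \ge 1/2$ enters: by \cref{f_1,f_2} these equal $O(\rho^\sigma)=O(|v|^{2\sigma})$ and $O(\rho^{\sigma-1/2})=O(|v|^{2\sigma-1})$, with both exponents nonnegative, so they are bounded by $\|v\|_{L^\infty}^{2\sigma}$ and $\|v\|_{L^\infty}^{2\sigma-1}$ instead of being singular at $v=0$ (contrast the $0<\sigma<1/2$ case, where \cref{lem:Be} is forced to carry the factor $\vep^{-(1-2\sigma)}$). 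This yields the pointwise bound, as in \cref{parparBe_bound},
\begin{equation*}
|\partial_{jk}(f(|v|^2)v)| \lesssim \|v\|_{L^\infty}^{2\sigma-1}\,|\partial_j v|\,|\partial_k v| + \|v\|_{L^\infty}^{2\sigma}\,|\partial_{jk}v|,
\end{equation*}
and taking $L^2$-norms, Hölder's inequality together with the Sobolev embedding $H^2 \hookrightarrow W^{1,4}$ (valid for $d = 1, 2, 3$) controls the first term by $\|v\|_{L^\infty}^{2\sigma-1}\|\nabla v\|_{L^4}^2 \lesssim C(M)$ and the second by $\|v\|_{L^\infty}^{2\sigma}\|\partial_{jk}v\|_{L^2} \le C(M)$, exactly as in \cref{parparBe_norm}.

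The step I expect to require the most care is not any of these estimates but the rigorous justification that $f(|v|^2)v$ genuinely lies in $H^2$ with weak second derivatives given by the formula above, since $f$ is only semi-smooth and $f''(\rho)\sim\rho^{\sigma-2}$ is singular at the origin when $\sigma<1$. The clean way is a regularization/density argument: for $1/2\le\sigma<1$ replace $f$ by the $C^3$ approximation $f_\vep$ of \cref{eq:f_reg_def}, for which \cref{parparBe} and the $L^2$-bound are classical, and observe that the crucial coefficients $\sqrt{\rho}\,f_\vep'(\rho)+\rho^{3/2}f_\vep''(\rho)$ and $f_\vep(\rho)+\rho f_\vep'(\rho)$ are bounded \emph{uniformly in} $\vep\le1$ by \cref{f_1,f_2} precisely because $\sigma\ge1/2$; one then passes to the limit $\vep\to0^+$ to transfer the $\vep$-uniform $H^2$-bound to $f(|v|^2)v$. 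For $\sigma\ge1$ the nonlinearity is smooth enough that the computation is direct and no regularization is needed. Collecting the $L^2$-, $H^1$- and $H^2$-contributions of $B_1$ and $B_2$ then gives $\|B(v)\|_{H^2} \le C(M, \|V\|_{H^2})$.
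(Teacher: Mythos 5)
Your proposal is correct and follows essentially the same route as the paper's proof of \cref{lem:B_2}: the same decomposition via \cref{eq:AB}, the algebra property of $H^2$ for the potential term, \cref{lem:B_1} for the lower-order contributions, and the same five-term second-derivative computation as in \cref{parparBe} leading to the pointwise bound $|\partial_{jk}(f(|v|^2)v)| \lesssim |v|^{2\sigma}\,|\partial_{jk} v| + |v|^{2\sigma-1}\,|\partial_j v|\,|\partial_k v|$ (the paper packages the coefficient bounds in \cref{eq:bound_f_2sgima-1} rather than quoting \cref{f_1,f_2}), closed by H\"older's inequality and $H^2 \hookrightarrow W^{1,4}$ exactly as in \cref{parparBe_norm}. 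Your closing paragraph rigorously justifying the weak second derivatives by passing to the limit in the $\vep$-uniform bounds for $f_\vep$ is a sound refinement of a point the paper leaves implicit (it simply invokes ``similar calculation as \cref{parparBe,parparBe_bound}''), not a different approach.
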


\begin{proof}
	Recalling \cref{eq:AB}, noting that $ H^2(\Omega) $ is an algebra when $ 1 \leq d \leq 3 $, we have
	\begin{equation}\label{B_H2}
		\| B(v) \|_{H^2} \leq \| V v \|_{H^2} + \| f(|v|^2)v \|_{H^2} \leq \| V \|_{H^2} \| v \|_{H^2} + \| f(|v|^2)v \|_{H^2}.
	\end{equation}
	When $ \sigma \geq 1/2 $, recalling \cref{semi-smooth,eq:G_def}, by similar calculation as \eqref{parparBe} and \eqref{parparBe_bound} and noting \cref{eq:bound_G} as well as
	\begin{equation} \label{eq:bound_f_2sgima-1}
		\left | f\p(|z|^2)z \right | + \left | f\p(|z|^2)\overline{z} \right  | + \left | f\pp(|z|^2)z^3 \right | + \left | f\pp(|z|^2)z^2 \overline{z} \right | \lesssim |z|^{2\sigma-1}, \  z \in \C, \  \sigma \geq \frac{1}{2},
	\end{equation}
	we have
	\begin{equation}
		\left | \partial_{jk}(f(|v|^2) v) \right | \lesssim |v|^{2\sigma} |\partial_{jk} v| + |v|^{2\sigma-1}|\partial_j v| \,|\partial_k v|,
	\end{equation}
	which yields, by Sobolev embedding {$H^2 \hookrightarrow W^{1,4}$ for $d=1,2,3$}, that
	\begin{equation}\label{parital_jk f(v)v}
		\| \partial_{jk}(f(|v|^2) v) \|_{L^2} \lesssim \| v \|_{L^\infty}^{2\sigma} \| \partial_{jk} v \|_{L^2} + \|v\|_{L^\infty}^{2\sigma - 1} \| \nabla v\|_{L^4}^2 \leq C(M).
	\end{equation}
	Combing \cref{parital_jk f(v)v} and \cref{lem:B_1}, noting \cref{B_H2}, we obtain the desired result.
\end{proof}

\begin{lemma}\label{lem:diff_B_H1}
	Let $ v, w \in H^2(\Omega) $ such that $ \| v \|_{H^2} \leq M $ and $ \| w \|_{H^2} \leq M $. When $ \sigma \geq 1/2 $, we have
	\begin{equation*}\label{eq:diff_H1}
		\| \B{v} - \B{w} \|_{H^1} \leq C(M,\| V \|_{W^{1, 4}}) \| v - w \|_{H^1}.
	\end{equation*}
\end{lemma}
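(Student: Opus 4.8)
The plan is to write $B(v)-B(w) = -iV(v-w) - i\big(f(|v|^2)v - f(|w|^2)w\big)$ and control the $L^2$-part and the gradient-part of the $H^1$ norm separately. The $L^2$ contribution is already supplied by \cref{lem:diff_B_L2}: the hypothesis $\|v\|_{H^2},\|w\|_{H^2}\le M$ gives the required $L^\infty$ bounds through the Sobolev embedding $H^2\hookrightarrow L^\infty$, producing a term $C(M,\|V\|_{L^\infty})\|v-w\|_{L^2}$, and $\|V\|_{L^\infty}\lesssim\|V\|_{W^{1,4}}$ for $d\le 3$. So the real work is the gradient.

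For the potential part I would use $\nabla\big(V(v-w)\big) = V\nabla(v-w) + (v-w)\nabla V$ and estimate $\|V\nabla(v-w)\|_{L^2}\le\|V\|_{L^\infty}\|\nabla(v-w)\|_{L^2}$ together with $\|(v-w)\nabla V\|_{L^2}\le\|v-w\|_{L^4}\|\nabla V\|_{L^4}$, which by $H^1\hookrightarrow L^4$ and $W^{1,4}\hookrightarrow L^\infty$ (both valid for $d\le 3$) yields the claimed $\|V\|_{W^{1,4}}$-dependence. For the nonlinear part I would invoke the gradient identity \eqref{eq:partial_B}, namely $\nabla\big(f(|v|^2)v\big) = (1+\sigma)f(|v|^2)\nabla v + \G{v}\nabla\overline v$, and subtract the analogous expression for $w$. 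Adding and subtracting cross terms splits each difference into a frozen-coefficient piece and a coefficient-difference piece, e.g.\ $f(|v|^2)\nabla v - f(|w|^2)\nabla w = f(|v|^2)\nabla(v-w) + (f(|v|^2)-f(|w|^2))\nabla w$, and likewise for the $G$-terms. The frozen-coefficient pieces are controlled by $\|f(|v|^2)\|_{L^\infty}+\|\G{v}\|_{L^\infty}\lesssim\|v\|_{L^\infty}^{2\sigma}\le C(M)$ (using \eqref{eq:bound_G}) times $\|\nabla(v-w)\|_{L^2}$.

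The coefficient-difference pieces require the local Lipschitz bounds $|f(|v|^2)-f(|w|^2)|\le C(M)|v-w|$ and $|\G{v}-\G{w}|\le C(M)|v-w|$ on the ball of radius $M$; granting these, Hölder's inequality bounds them by $C(M)\|v-w\|_{L^4}\|\nabla w\|_{L^4}$, and since $\|\nabla w\|_{L^4}\le\|w\|_{W^{1,4}}\lesssim\|w\|_{H^2}\le M$ via $H^2\hookrightarrow W^{1,4}$ and $\|v-w\|_{L^4}\lesssim\|v-w\|_{H^1}$, the estimate closes. The Lipschitz bound for $f(|\cdot|^2)$ is the easy one: from the elementary inequality $\big||z_1|^{2\sigma}-|z_2|^{2\sigma}\big|\le 2\sigma\max\{|z_1|,|z_2|\}^{2\sigma-1}\big||z_1|-|z_2|\big|$, the exponent $2\sigma-1$ is nonnegative exactly because $\sigma\ge 1/2$, so the factor $\max\{|z_1|,|z_2|\}^{2\sigma-1}\le M^{2\sigma-1}$ is bounded.

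The main obstacle is the Lipschitz estimate for $\G{\cdot}$, and it is precisely here that the restriction $\sigma\ge 1/2$ is essential. Since $\G{z}=\beta\sigma|z|^{2\sigma-2}z^2$ is singular at the origin when $\sigma<1$, I would compute its Wirtinger derivatives and find $|\partial_z G|+|\partial_{\bar z}G|\lesssim|z|^{2\sigma-1}$; the exponent is nonnegative iff $\sigma\ge 1/2$, so on $\{|z|\le M\}$ these derivatives are uniformly bounded by $C(M)\sim M^{2\sigma-1}$. A mean-value argument along the segment $z_\theta=(1-\theta)z_1+\theta z_2$, as in the proof of \cref{lem:diff_B_L2} (handling separately the case where the segment meets the origin, where the bound $|z_\theta|^{2\sigma-1}\le C(M)$ persists), then delivers the Lipschitz bound for $G$. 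For $\sigma<1/2$ these pointwise Lipschitz bounds fail, the coefficients being merely Hölder continuous, which is the structural reason the $H^1$-stability result is confined to $\sigma\ge 1/2$.
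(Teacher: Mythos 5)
Your proposal is correct and follows essentially the same route as the paper's proof: the same gradient identity \cref{eq:partial_B}, the same splitting into frozen-coefficient and coefficient-difference pieces, the same H\"older/Sobolev bookkeeping via $H^1\hookrightarrow L^4$, $H^2\hookrightarrow W^{1,4}$ and $W^{1,4}\hookrightarrow L^\infty$, and the same key pointwise Lipschitz bounds \cref{eq:diff_f,eq:diff_G} with factor $\max\{|z_1|,|z_2|\}^{2\sigma-1}$, whose nonnegative exponent for $\sigma\ge 1/2$ is exactly the mechanism you identify. The only cosmetic difference is that you derive the Lipschitz bound for $G$ via Wirtinger derivatives with explicit care at the origin, whereas the paper obtains \cref{eq:diff_f,eq:diff_G} by the same segment mean-value argument it used for \cref{eq:lip_f(z)z}.
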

	
\begin{proof}
	From \cref{eq:partial_B}, one gets
	\begin{equation}\label{nabla_diff_B}
		\begin{aligned}
			\nabla \left (\B{v} - \B{w} \right )
			&= -i \left [ \nabla (V (v-w)) + i (1+\sigma) (f(|v|^2) \nabla v - f(|w|^2) \nabla w) \right. \\
			&\quad \left. + \G{v} \nabla\overline{v} - \G{w} \nabla\overline{w} \right ].
		\end{aligned}
	\end{equation}
	Using H\"older's inequality and Sobolev embedding {$H^1 \hookrightarrow L^4$ and $W^{1, 4} \hookrightarrow L^\infty$ (both hold for $d=1,2,3$)}, we have
	\begin{align}\label{eq:Vv}
			\| \nabla (V(v - w)) \|_{L^2}
			&\leq \| \nabla V \|_{L^4} \| v-w \|_{L^4} + \| V \|_{L^\infty} \| \nabla (v-w) \|_{L^2} \notag\\
			&\lesssim \| V \|_{W^{1, 4}} \| v-w \|_{H^1}.
	\end{align}
	By \cref{nabla_diff_B}, it remains to show that
	\begin{align}
		&\| f(|v|^2) \nabla v - f(|w|^2) \nabla w \|_{L^2} \leq C(M) \| v - w \|_{H^1}, \label{eq:diff_H1_1}\\
		&\| \G{v} \nabla \overline{v} - \G{w} \nabla \overline{w} \|_{L^2} \leq C(M) \| v - w \|_{H^1}. \label{eq:diff_H1_2}
	\end{align}
	When $ \sigma \geq 1/2 $, following the proof of \cref{eq:lip_f(z)z}, we have, for $ z_1, z_2 \in \C $,
	\begin{align}
		&| f(|z_1|^2) - f(|z_2|^2) | \lesssim  \max\{|z_1|, |z_2|\}^{2\sigma-1}  |z_1 - z_2|, \label{eq:diff_f}\\
		&| \G{z_1} - \G{z_2}| \lesssim \max\{|z_1|, |z_2|\}^{2\sigma-1} |z_1 - z_2|. \label{eq:diff_G}
	\end{align}
	Using \cref{eq:diff_f} and Sobolev embedding {$H^1 \hookrightarrow L^4$ and $H^2 \hookrightarrow L^\infty$}, we have
	\begin{align*}
		&\| f(|v|^2) \nabla v - f(|w|^2) \nabla w \|_{L^2}\\
		&\leq \| f(|v|^2) \nabla (v - w) \|_{L^2} + \| (f(|v|^2) - f(|w|^2)) \nabla w \|_{L^2} \\
		&\leq C(\| v \|_{L^\infty}) \| v - w \|_{H^1} + C(\max\{\| v \|_{L^\infty}, \| w \|_{L^\infty}\}) \| (v - w) \nabla w \|_{L^2} \\
		&\leq C(M) \| v - w \|_{H^1} + C(M) \| v - w \|_{L^4}\| \nabla w \|_{L^4} \\
		&\leq C(M) \| v - w \|_{H^1},
	\end{align*}
	which proves \cref{eq:diff_H1_1}. Similarly, we can prove \cref{eq:diff_H1_2}, which completes the proof.
\end{proof}

\begin{lemma}\label{lem:dB(v)w_2}
	Let $ v, w \in H^1(\Omega) \cap L^\infty(\Omega) $ such that $ \| v \|_{L^\infty} + \| v \|_{H^1} \leq M $ and $ \| w \|_{L^\infty} + \| w \|_{H^1} \leq M $. When $ \sigma \geq 1/2 $, we have
	\begin{equation*}
		\| dB(v) [w] \|_{H^1} \leq C(M, \| V \|_{W^{1, 4}}). 
	\end{equation*}
\end{lemma}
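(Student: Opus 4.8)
The plan is to bound $\|dB(v)[w]\|_{H^1}$ by treating its $L^2$ part and the $L^2$ norm of its gradient separately, starting from the explicit formula \cref{gateaux-new},
\[
	dB(v)[w] = -i\left[ Vw + (1+\sigma)f(|v|^2)w + \G{v}\,\overline w \right].
\]
The $L^2$ bound is immediate from \cref{lem:dB_L2} together with $\|w\|_{L^2}\le\|w\|_{H^1}\le M$, so the entire task reduces to estimating $\|\nabla(dB(v)[w])\|_{L^2}$ term by term after applying the product and chain rules.

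For the potential contribution $\nabla(Vw)=w\nabla V+V\nabla w$, I would argue exactly as in \cref{eq:Vv}: H\"older's inequality together with the Sobolev embeddings $H^1\hookrightarrow L^4$ and $W^{1,4}\hookrightarrow L^\infty$ (valid for $d=1,2,3$) gives $\|\nabla(Vw)\|_{L^2}\lesssim\|V\|_{W^{1,4}}\|w\|_{H^1}\le C(M,\|V\|_{W^{1,4}})$.

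For the two nonlinear terms I would differentiate directly, obtaining $\nabla(f(|v|^2)w)=f(|v|^2)\nabla w+f'(|v|^2)(v\nabla\overline v+\overline v\nabla v)\,w$ and, writing $\nabla(\G{v})=(f''(|v|^2)v^2\overline v+2f'(|v|^2)v)\nabla v+f''(|v|^2)v^3\nabla\overline v$, also $\nabla(\G{v}\,\overline w)=\G{v}\,\nabla\overline w+\nabla(\G{v})\,\overline w$. The point is that every coefficient multiplying a first derivative here is one of the quantities estimated in \cref{eq:bound_f_2sgima-1}, while the undifferentiated nonlinear factors $f(|v|^2)$ and $\G{v}$ are controlled by \cref{eq:bound_G}. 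Because $\sigma\ge1/2$ forces $2\sigma-1\ge0$, all these coefficients are bounded pointwise by $\|v\|_{L^\infty}^{2\sigma-1}$ or $\|v\|_{L^\infty}^{2\sigma}$, hence by constants depending only on $M$.

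The step that genuinely uses both $\sigma\ge1/2$ and the fact that we are only assuming $H^1$ (rather than $H^2$) regularity is the way the derivative factors are distributed across the norms. Since $v$ and $w$ each carry only one $L^2$-controlled derivative, I cannot use the two-gradient $L^4\times L^4$ products that appear in the $H^2$ computations \cref{parparBe_bound} and in the proof of \cref{lem:B_2}. Instead, in each product I would keep the single factor $\nabla v$ or $\nabla w$ in $L^2$ and place all remaining factors---the nonlinear coefficient and the undifferentiated $w$---in $L^\infty$; this is possible precisely because $w\in L^\infty$ and the coefficients are bounded via \cref{eq:bound_f_2sgima-1,eq:bound_G}. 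This produces terms of the form $\|v\|_{L^\infty}^{2\sigma-1}\|w\|_{L^\infty}\|\nabla v\|_{L^2}$ and $\|v\|_{L^\infty}^{2\sigma}\|\nabla w\|_{L^2}$, each $\le C(M)$. Summing the three contributions yields $\|\nabla(dB(v)[w])\|_{L^2}\le C(M,\|V\|_{W^{1,4}})$ and completes the proof. The only mild subtlety, handled as in \cref{lem:B_2}, is the chain-rule computation for $G$ near $v=0$, where the potentially singular factors are tamed exactly by the exponent inequality $2\sigma-1\ge0$.
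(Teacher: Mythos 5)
Your proof is correct and takes essentially the same route as the paper: both reduce the potential term to \cref{eq:Vv} and control the nonlinear terms by placing the single gradient factor in $L^2$ and everything else in $L^\infty$, with the coefficients bounded pointwise via \cref{eq:bound_f_2sgima-1,eq:bound_G} thanks to $2\sigma-1\ge 0$. The only cosmetic difference is that the paper packages this through the bilinear estimate \cref{eq:bilinear_H1} after first proving the intermediate bounds $\| f(|v|^2) \|_{H^1}\le C(M)$ and $\| G(v) \|_{H^1}\le C(M)$ as in \cref{H1_fv}, whereas you expand the product rule directly---the resulting terms are identical.
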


\begin{proof}
	From \cref{gateaux-new}, using \cref{eq:Vv}, we have
	\begin{align}\label{dbvw_H1}
			\| dB(v) [w] \|_{H^1}
			&\leq \| V w \|_{H^1} + (1+\sigma) \| f(|v|^2)w \|_{H^1} + \| G(v) \overline{w} \|_{H^1} \notag\\
			&\lesssim \| V \|_{W^{1, 4}} \| w \|_{H^1} +  \| f(|v|^2)w \|_{H^1} + \| G(v) \overline{w} \|_{H^1}.
	\end{align}
	When $ \sigma \geq 1/2 $, recalling \cref{eq:bound_f_2sgima-1}, we have
	\begin{align}\label{H1_fv}
			\| f(|v|^2) \|_{H^1}
			&= \| f(|v|^2) \|_{L^2} + \| \nabla f(|v|^2) \|_{L^2} \lesssim \| v \|_{L^\infty}^{2\sigma} +  \| f\p(|v|^2)v \nabla v \|_{L^2} \notag\\
			&\leq \| v \|_{L^\infty}^{2\sigma} +  \| v \|_{L^\infty}^{2\sigma-1} \| \nabla v \|_{L^2} \leq C(M). 
	\end{align}
	Similarly, one gets $ \| G(v) \|_{H^1} \leq C(M) $. Then using
	\begin{equation}\label{eq:bilinear_H1}
		\| u_1 u_2 \|_{H^1} \leq \| u_1 \|_{L^\infty} \| u_2 \|_{H^1} + \| u_2 \|_{L^\infty} \| u_1 \|_{H^1}, \quad u_1, u_2 \in H^1(\Omega) \cap L^\infty(\Omega),
	\end{equation}
	and recalling \cref{eq:bound_G}, we have
	\begin{align}
		&\| f(|v|^2)w \|_{H^1} \leq \| f(|v|^2) \|_{L^\infty} \| w \|_{H^1} + \| w \|_{L^\infty} \| f(|v|^2) \|_{H^1} \leq C(M), \label{fvw_H1}\\
		&\| G(v) \overline{w} \|_{H^1} \leq  \| G(v) \|_{L^\infty} \| \overline{w} \|_{H^1} + \| \overline{w} \|_{L^\infty} \| G(v) \|_{H^1} \leq C(M). \label{Gvw_H1}
	\end{align}
	Plugging \cref{fvw_H1,Gvw_H1} into \cref{dbvw_H1} yields the desired result.
\end{proof}

\begin{lemma}\label{lem:dBvv_H2}
	Let $ v, w \in H^2(\Omega) $ such that $ \| v \|_{H^2} \leq M $ and $ \| w \|_{H^2} \leq M $. If $ |w(\vx)| \leq C |v(\vx)| $ for all $ \vx \in \Omega $, when $ \sigma \geq 1/2 $, we have
	\begin{equation*}
		\| dB(v) [w] \|_{H^2} \leq C \left (M, \| V \|_{H^2} \right).
	\end{equation*}
\end{lemma}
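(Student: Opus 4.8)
The plan is to estimate each term in the explicit representation $dB(v)[w] = -i\big[ Vw + (1+\sigma)f(|v|^2)w + \G{v}\,\overline{w}\big]$ from \cref{gateaux-new} separately in $H^2$. Since $H^2(\Omega)$ is an algebra for $1\le d\le 3$, the potential term is immediate: $\|Vw\|_{H^2}\le\|V\|_{H^2}\|w\|_{H^2}\le C(M,\|V\|_{H^2})$. Using $\|u\|_{H^2}\lesssim\|u\|_{H^1}+\sum_{j,k}\|\partial_{jk}u\|_{L^2}$, the $H^1$-part of the whole quantity is already supplied by \cref{lem:dB(v)w_2} (its hypotheses $\|v\|_{L^\infty}+\|v\|_{H^1}\le M$, and likewise for $w$, follow from $\|v\|_{H^2},\|w\|_{H^2}\le M$ via $H^2\hookrightarrow H^1\cap L^\infty$, while $\|V\|_{W^{1,4}}\lesssim\|V\|_{H^2}$ for $d\le 3$). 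Thus it remains only to bound the second-order derivatives $\partial_{jk}\big(f(|v|^2)w\big)$ and $\partial_{jk}\big(\G{v}\,\overline{w}\big)$ in $L^2$.

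I would expand these by the product and chain rules, as in the computation of \cref{parparBe}, writing $\rho=|v|^2$ and using $|\partial_j\rho|\le 2|v|\,|\partial_j v|$ together with $|\partial_{jk}\rho|\lesssim|\partial_j v|\,|\partial_k v|+|v|\,|\partial_{jk}v|$. Differentiating $f(|v|^2)w$ twice yields terms of the types $f\pp(\rho)\partial_j\rho\,\partial_k\rho\,w$, $f\p(\rho)\partial_{jk}\rho\,w$, $f\p(\rho)\partial_j\rho\,\partial_k w$, and $f(\rho)\partial_{jk}w$; differentiating $\G{v}\,\overline{w}=f\p(\rho)v^2\overline{w}$ twice produces analogous terms, now reaching $f^{\prime\prime\prime}(\rho)$ and carrying extra factors $v^2$.

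The essential point is the most singular contributions, namely $f\pp(\rho)\partial_j\rho\,\partial_k\rho\,w$ and its $G$-counterpart $f^{\prime\prime\prime}(\rho)\partial_j\rho\,\partial_k\rho\,v^2\overline{w}$, which otherwise carry the negative powers $|v|^{2\sigma-4}$ and $|v|^{2\sigma-6}$. Here the hypothesis $|w(\vx)|\le C|v(\vx)|$ is used to trade one factor $|w|$ for $|v|$, so that these products regroup into the controlled combinations $|f\pp(|v|^2)v^3|$ and $|f^{\prime\prime\prime}(|v|^2)v^5|$, both of which are $\lesssim|v|^{2\sigma-1}$ for $\sigma\ge 1/2$ --- the first by \cref{eq:bound_f_2sgima-1} and the second by the identical computation from $f(\rho)=\beta\rho^\sigma$. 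After this regrouping every resulting term is dominated pointwise by one of $|v|^{2\sigma}|\partial_{jk}v|$, $|v|^{2\sigma}|\partial_{jk}w|$, $|v|^{2\sigma-1}|\partial_j v|\,|\partial_k v|$, or $|v|^{2\sigma-1}|\partial_j v|\,|\partial_k w|$. Taking $L^2$ norms and applying H\"older's inequality with the Sobolev embeddings $H^2\hookrightarrow L^\infty$ and $H^2\hookrightarrow W^{1,4}$ (valid for $d=1,2,3$) bounds each of these by $\|v\|_{L^\infty}^{2\sigma}\|v\|_{H^2}$, $\|v\|_{L^\infty}^{2\sigma}\|w\|_{H^2}$, $\|v\|_{L^\infty}^{2\sigma-1}\|v\|_{H^2}^2$, or $\|v\|_{L^\infty}^{2\sigma-1}\|v\|_{H^2}\|w\|_{H^2}$, all of which are $\le C(M)$.

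The main obstacle is exactly these curvature terms. Without the pointwise domination $|w|\le C|v|$, the factors $|v|^{2\sigma-2}$ and $|v|^{2\sigma-4}$ would be genuinely singular on the zero set of $v$ when $\sigma<1$, and the Sobolev integrability alone would not control their $L^2$ norms; the hypothesis is precisely what produces the nonnegative exponent $2\sigma-1\ge 0$ and renders all coefficient factors uniformly bounded. Everything else is a routine product estimate via H\"older and $H^2\hookrightarrow W^{1,4}$.
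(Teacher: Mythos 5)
Your proof is correct and is exactly the argument the paper intends: the paper omits the proof of \cref{lem:dBvv_H2}, deferring to the computation in \cref{lem:B_2}, and your expansion of the second derivatives with the pointwise bounds $|f^{\prime\prime}(|v|^2)v^3|,\ |f^{\prime\prime\prime}(|v|^2)v^5| \lesssim |v|^{2\sigma-1}$ (cf.\ \cref{eq:bound_f_2sgima-1}), followed by H\"older's inequality and $H^2 \hookrightarrow W^{1,4}$, $H^2 \hookrightarrow L^\infty$, is precisely that computation carried out for $dB(v)[w]$. You also correctly pinpoint what the paper's ``similarly'' glosses over: the hypothesis $|w(\vx)| \leq C|v(\vx)|$ is needed exactly to absorb the singular residual factor $|v|^{2\sigma-2}$ coming from the $f^{\prime\prime}$ and $f^{\prime\prime\prime}$ terms when $1/2 \leq \sigma < 1$, yielding the nonnegative exponent $2\sigma-1$.
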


\begin{proof}
	The proof can be obtained similarly as the proof of \cref{lem:B_2} and we shall omit it here for brevity.
\end{proof}

\begin{lemma}\label{lem:bound_nonlinear_flow}
	Let $ 0 < \tau < 1 $ and $ v \in X_N $ such that $ \| v \|_{L^\infty} \leq M $ and $ \| v \|_{H^2} \leq M_1 $. When $ \sigma > 0 $, we have
	\begin{equation}\label{eq:phi_B_H1}
		\| \Phi_{B}^\tau (v) \|_{H^1} \leq \left (1 + C_1 (M, \| V \|_{W^{1, 4}}) \tau \right ) \| v \|_{H^1(\Omega)},
	\end{equation}
	and when $ \sigma \geq 1/2 $, we have
	\begin{equation}\label{eq:phi_B_H2}
		\| \Phi_{B}^\tau (v) \|_{H^2} \leq C_2(M_1, \| V \|_{H^2}).
	\end{equation}
\end{lemma}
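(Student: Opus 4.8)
The plan is to write the nonlinear flow explicitly as $\Phi_B^\tau(v) = v\, e^{-i\tau g}$ with $g := V + f(|v|^2)$, and then differentiate this product directly. Since $|e^{-i\tau g}| = 1$ pointwise, the modulus is preserved, $|\Phi_B^\tau(v)| = |v|$, so $\|\Phi_B^\tau(v)\|_{L^2} = \|v\|_{L^2}$ and only the derivatives need to be estimated. Throughout I work with the equivalent norm $\|\nabla\cdot\|_{L^2}$ on $H_0^1(\Omega)$ and repeatedly use the identity $f'(|v|^2)|v|^2 = \sigma f(|v|^2)$ together with $f(|v|^2) \lesssim |v|^{2\sigma}$ from \cref{eq:bound_G}, as well as $|\nabla(|v|^2)| \le 2|v|\,|\nabla v|$.

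For \cref{eq:phi_B_H1}, I would compute $\nabla \Phi_B^\tau(v) = (\nabla v - i\tau v\,\nabla g)\,e^{-i\tau g}$ and bound the corrector pointwise by $|v|\,|\nabla g| \leq |v|\,|\nabla V| + 2\sigma f(|v|^2)\,|\nabla v| \lesssim |v|\,|\nabla V| + M^{2\sigma}|\nabla v|$. Taking $L^2$ norms and estimating $\||v|\,|\nabla V|\|_{L^2} \leq \|v\|_{L^4}\|\nabla V\|_{L^4} \lesssim \|V\|_{W^{1,4}}\|v\|_{H^1}$ via $H^1 \hookrightarrow L^4$ then gives $\|\nabla\Phi_B^\tau(v)\|_{L^2} \leq (1 + C(M,\|V\|_{W^{1,4}})\tau)\|v\|_{H^1}$, which is the claim. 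This argument uses only $f'(|v|^2)|v|^2 \lesssim |v|^{2\sigma}$, hence one single factor of $|v|^2$ to absorb the singularity of $f'$, and is therefore valid for every $\sigma > 0$.

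For \cref{eq:phi_B_H2}, I would apply the product rule twice to obtain, for $1 \le j,k \le d$,
\begin{equation*}
	\partial_{jk}\Phi_B^\tau(v) = \bigl[\partial_{jk}v - i\tau(\partial_j v\,\partial_k g + \partial_k v\,\partial_j g) - i\tau v\,\partial_{jk}g - \tau^2 v\,\partial_j g\,\partial_k g\bigr]e^{-i\tau g}.
\end{equation*}
The terms carrying only first derivatives of $v$ and $g$ are controlled by H\"older's inequality and $H^2 \hookrightarrow W^{1,4}\cap L^\infty$, after noting $|\partial_j g| \leq |\partial_j V| + C(M)|\partial_j v|$, where $\sigma \geq 1/2$ makes the prefactor $f'(|v|^2)|v| \lesssim |v|^{2\sigma-1}$ bounded. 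The decisive term is $\tau v\,\partial_{jk}g$: writing $\partial_{jk}(f(|v|^2)) = f''(|v|^2)\partial_j(|v|^2)\partial_k(|v|^2) + f'(|v|^2)\partial_{jk}(|v|^2)$, the factor $f''$ is singular at $v=0$ when $\sigma<1$, but the leading $v$ from $\Phi_B^\tau(v) = v\,e^{-i\tau g}$ combines with the factors of $v$ hidden in $\partial(|v|^2)$ and $\partial_{jk}(|v|^2)$ to yield the bounded prefactors $|v|^3 f''(|v|^2) \lesssim |v|^{2\sigma-1}$ and $|v|f'(|v|^2)\lesssim|v|^{2\sigma-1}$, both finite precisely because $\sigma \geq 1/2$ (this is exactly \cref{eq:bound_f_2sgima-1}). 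Summing the $L^2$ bounds of all five terms and using $\tau < 1$ together with the already-established $L^2$ and $H^1$ bounds then yields $\|\Phi_B^\tau(v)\|_{H^2} \leq C_2(M_1,\|V\|_{H^2})$.

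The main obstacle is precisely this compensation inside $v\,\partial_{jk}(f(|v|^2))$: without the structural observation that every occurrence of the singular $f''(|v|^2)$ is accompanied by enough powers of $|v|$ to produce the net exponent $2\sigma - 1 \geq 0$, the $H^2$ norm of the nonlinear flow could not be controlled for $1/2 \leq \sigma < 1$. This is the sole place where the restriction $\sigma \geq 1/2$ is needed, and it is what separates the $H^2$-propagation estimate \cref{eq:phi_B_H2} from the unconditional $H^1$ estimate \cref{eq:phi_B_H1}.
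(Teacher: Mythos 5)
Your proof is correct and takes essentially the same route as the paper: the paper's one-line proof simply defers to the computations in \cref{lem:B_1,lem:B_2}, and your explicit differentiation of $v\,e^{-i\tau g}$ reproduces exactly those estimates, with the unimodularity of $e^{-i\tau g}$, the identity $f'(\rho)\rho = \sigma f(\rho)$, H\"older plus $H^2 \hookrightarrow W^{1,4}$, and \cref{eq:bound_f_2sgima-1} playing the same roles. In particular, you correctly identify the sole use of $\sigma \geq 1/2$, namely bounding $|f''(|v|^2)|\,|v|^3$ and $|f'(|v|^2)|\,|v|$ by $|v|^{2\sigma-1}$, which is precisely the mechanism behind the paper's \cref{lem:B_2}.
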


\begin{proof}
	Recalling that $ \Phi_{B}^\tau (v) = ve^{- i \tau (V+f(|v|^2))} $ in \cref{eq:phi_B_def}, the proof of \cref{eq:phi_B_H1} and \cref{eq:phi_B_H2} follows similarly from the proof of \cref{lem:B_1} and \cref{lem:B_2}, respectively.

\end{proof}

\begin{lemma}\label{lem:diff_phi_B_2_sigma>1/2}
	Let $ z_1, z_2 \in \C $. When $ \sigma \geq 1/2 $, one has
	\begin{equation*}
		| \Phi_{B_2}^\tau (z_1) - \Phi_{B_2}^\tau (z_2) | \leq (1 + C \tau) | z_1 - z_2 |,
	\end{equation*}
	where $ \Phi_{B_2}^\tau (z) = z e^{- i \tau f(|z|^2)} $ in \cref{eq:phi_B_2_def} and $ C \sim \max\{|z_1|, |z_2|\}^{2\sigma} $.
\end{lemma}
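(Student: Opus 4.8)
The plan is to mirror the proof of \cref{lem:diff_Phi_B_2_sigma<1/2}, since the exponent restriction enters only through the estimate of the nonlinearity difference $|f(|z_1|^2)-f(|z_2|^2)|$; every other step is identical for $\sigma\geq 1/2$. First I would assume without loss of generality that $|z_2|\leq|z_1|$ and dispose of the trivial case $z_2=0$. Then, reusing the elementary bound $|1-e^{i\theta}|\leq|\theta|$ for $\theta\in\R$ exactly as in the derivation of \cref{eq:diff_phi_B_2}, I arrive at the intermediate inequality
\begin{equation*}
	|\Phi_{B_2}^\tau(z_1)-\Phi_{B_2}^\tau(z_2)| \leq |z_1-z_2| + \tau\,|z_2|\,\bigl|f(|z_1|^2)-f(|z_2|^2)\bigr|.
\end{equation*}
This part requires no change from the $0<\sigma\leq 1/2$ argument.

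The key step is the bound on $|f(|z_1|^2)-f(|z_2|^2)|$. Writing $f(|z|^2)=\beta|z|^{2\sigma}$ via \cref{semi-smooth} and setting $g(t)=\beta t^{2\sigma}$, for $\sigma\geq 1/2$ one has $2\sigma-1\geq 0$, so $g$ is Lipschitz on bounded intervals with $g'(t)=2\sigma\beta t^{2\sigma-1}$ maximized at the larger endpoint. The mean value theorem (equivalently the estimate \cref{eq:diff_f}) then gives
\begin{equation*}
	\bigl|f(|z_1|^2)-f(|z_2|^2)\bigr| \leq 2\sigma|\beta|\,\max\{|z_1|,|z_2|\}^{2\sigma-1}\,|z_1-z_2|.
\end{equation*}
Plugging this into the intermediate inequality and using $|z_2|=\min\{|z_1|,|z_2|\}\leq\max\{|z_1|,|z_2|\}$ to absorb the prefactor $|z_2|$ into $\max\{|z_1|,|z_2|\}^{2\sigma-1}$, I obtain the factor $\max\{|z_1|,|z_2|\}^{2\sigma}$, which yields the claimed constant $C\sim\max\{|z_1|,|z_2|\}^{2\sigma}$.

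I expect no genuine obstacle here; the result is in fact \emph{easier} than its $\sigma\leq 1/2$ counterpart. The only subtle point in the earlier case was that the derivative of $t\mapsto t^{2\sigma}$ blows up at the origin when $2\sigma<1$, which forced the use of $\min\{|z_1|,|z_2|\}$ and the special treatment of $z_2\neq 0$. For $\sigma\geq 1/2$ the map is genuinely Lipschitz near $0$, so the mean value theorem applies directly without any singularity concern; the only thing to watch is that the resulting constant is now controlled by $\max\{|z_1|,|z_2|\}$ rather than $\min\{|z_1|,|z_2|\}$, consistent with the statement.
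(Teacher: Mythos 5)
Your proof is correct and takes essentially the same route as the paper: the paper's own (one-line) proof of \cref{lem:diff_phi_B_2_sigma>1/2} simply repeats the argument of \cref{lem:diff_Phi_B_2_sigma<1/2} with the bound \cref{eq:diff_f_sigma<1/2} replaced by \cref{eq:diff_f}, which is exactly your substitution of the mean value theorem estimate $|f(|z_1|^2)-f(|z_2|^2)|\leq 2\sigma|\beta|\max\{|z_1|,|z_2|\}^{2\sigma-1}|z_1-z_2|$ followed by absorbing $|z_2|\leq\max\{|z_1|,|z_2|\}$. Your closing remark is also accurate: since $t\mapsto t^{2\sigma}$ is Lipschitz near the origin when $\sigma\geq 1/2$, the special treatment of $z_2=0$ is harmless but no longer needed.
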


\begin{proof}
	The proof follows from the proof of \cref{lem:diff_Phi_B_2_sigma<1/2} by replacing \cref{eq:diff_f_sigma<1/2} with \cref{eq:diff_f}.
\end{proof}

\subsection{Local truncation error}
\begin{proposition}[local truncation error]\label{prop:local_error_sigma>1/2}
	Assume that $ 0 < \tau < 1 $, $ 0 < h < 1 $, $ V \in H^2 $ and $ \sigma \geq 1/2 $. Under the assumption \cref{A}, for $ 0 \leq k \leq T/\tau - 1 $, we have
	\begin{align}
		&\| P_N \psi(\cdot, t_{k+1}) - \Phi^\tau( P_N \psi(\cdot, t_k) ) \|_{L^2(\Omega)} \leq C_1(M_2) \tau \left( \tau + h^2 \right), \label{localerror:L^2_2}\\
		&\| P_N \psi(\cdot, t_{k+1}) - \Phi^\tau( P_N \psi(\cdot, t_k) ) \|_{H^1(\Omega)} \leq C_2(M_2) \tau \left( \tau^\frac{1}{2} + h \right). \label{localerror_H^1_1}
	\end{align}
\end{proposition}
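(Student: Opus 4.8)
The plan is to reuse the error decomposition $P_N \psi(t_{k+1}) - \Phi^\tau(P_N \psi(t_k)) = e_1 - e_2 + e_3$ constructed in the proof of \cref{prop:local_error_sigma<1/2}, with $e_1$, $e_2$, $e_3$ as in \cref{e1,e2,e3}. The whole gain from $\sigma \geq 1/2$ is that $f(|\cdot|^2)(\cdot)$ is now genuinely of class $H^2$ by \cref{lem:B_2} (the only dangerous term in $\partial_{jk}(f(|v|^2)v)$ behaves like $|v|^{2\sigma-1}|\nabla v|^2$, which is controlled once $2\sigma - 1 \geq 0$), so no local regularization $\fe$ is required and the full local orders are recovered; the price is that one works throughout with the $H^2$-bound on $B$ supplied by \cref{A} and \cref{lem:B_2}.

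First I would discard the quadratic remainders $e_1$ and $e_2$. In $L^2$ this is identical to \cref{prop:local_error_sigma<1/2}: \cref{lem:dB_L2} and \cref{eq:B_L2} give $\|e_1\|_{L^2} + \|e_2\|_{L^2} \leq C(M_2)\tau^2$. In $H^1$ I would instead use \cref{lem:dB(v)w_2} to bound the G\^ateaux derivative and \cref{lem:bound_nonlinear_flow} to keep $\Phi_B^{\theta\tau}(P_N \vini)$ bounded in $H^1 \cap H^2$, together with the $H^1$-boundedness of $e^{it\Delta}$, $P_N$ and $I_N$, to obtain $\|e_1\|_{H^1} + \|e_2\|_{H^1} \leq C(M_2)\tau^2$. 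Since $\tau < 1$, the bound $O(\tau^2)$ sits below both targets $\tau(\tau + h^2)$ and $\tau(\tau^{1/2} + h)$.

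The substance is the linear term $e_3$, whose integrand splits, after the triangle inequality as in \cref{prop:local_error_sigma<1/2}, into four contributions $e_3^1 = B(e^{is\Delta}\vini) - B(\vini)$, $e_3^2 = B(\vini) - B(P_N \vini)$, $e_3^3 = (I - e^{is\Delta})P_N B(P_N \vini)$ and $e_3^4 = (I_N - P_N)B(P_N \vini)$. For the $L^2$ bound, \cref{lem:diff_B_L2} reduces $e_3^1$ and $e_3^2$ to $C\|(I - e^{is\Delta})\vini\|_{L^2}$ and $C\|\vini - P_N \vini\|_{L^2}$, which are $O(\tau)$ and $O(h^2)$ by \cref{eq:linear_flow_error_L2,eq:proj_inter_error_L2}; and \cref{lem:B_2} makes $B(P_N \vini) \in H^2$, so $e_3^3 = O(\tau)$ and $e_3^4 = O(h^2)$ again by \cref{eq:linear_flow_error_L2,eq:proj_inter_error_L2}. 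Integrating in $s$ yields the outer factor $\tau$ and \cref{localerror:L^2_2}. For the $H^1$ bound the same four pieces are treated with \cref{lem:diff_B_H1} replacing \cref{lem:diff_B_L2}, but the decisive new ingredient is the fractional smoothing estimate $\|(I - e^{it\Delta})w\|_{H^1} \lesssim t^{1/2}\|w\|_{H^2}$, obtained from $|1 - e^{-it\mu_l^2}| \leq C(t\mu_l^2)^{1/2}$ on the sine modes. This only brings $e_3^1$ and $e_3^3$ down to $O(\tau^{1/2})$, while $e_3^2$ and $e_3^4$ are $O(h)$ from the $H^1$ projection and interpolation errors; integrating in $s$ gives \cref{localerror_H^1_1}.

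The main obstacle is twofold. The essential one is the uniform $H^2$-control of $B(P_N \vini)$: this is the single place where $\sigma \geq 1/2$ is truly used, and it is precisely what upgrades the fractional $\tau^{1/2+\sigma}$, $h^{1+2\sigma}$ rates of the $\sigma \leq 1/2$ regime to the full $\tau$, $h^2$ rates in $L^2$. Executing it rigorously also requires checking the $H^1$- and $H^2$-stability of the spectral projection $P_N$ and the sine interpolation $I_N$, so that these operators may be inserted and removed freely. The second obstruction is intrinsic rather than technical: even for $w \in H^2$, the $H^1$-norm of the free-Schr\"odinger defect $(I - e^{it\Delta})w$ decays only like $t^{1/2}$, so under the $H^2$ regularity of \cref{A} the $H^1$ local error cannot be pushed below $O(\tau^{3/2})$; this is exactly the origin of the reduced half-order in $\tau$ appearing in \cref{localerror_H^1_1}.
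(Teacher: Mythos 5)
Your decomposition and almost all of the estimates coincide with the paper's proof: the $L^2$ bound \cref{localerror:L^2_2} is obtained exactly as you describe (reuse of the $\sigma\le 1/2$ argument for $e_1,e_2$, and \cref{lem:B_2} upgrading $e_3^3, e_3^4$ to $O(\tau)$ and $O(h^2)$ via \cref{eq:linear_flow_error_L2,eq:proj_inter_error_L2}, with no regularization $\fe$ needed), and for the $H^1$ bound your treatment of $e_1$ (via \cref{lem:dB(v)w_2}) and of the four pieces of $e_3$ (via \cref{lem:diff_B_H1}, the half-order smoothing estimate $\|(I-e^{it\Delta})\phi\|_{H^1}\lesssim \sqrt{t}\,\|\phi\|_{H^2}$, and the $H^1$ projection/interpolation errors \cref{eq:proj_inter_error_H1}) is precisely \cref{e3_12_H1,e3_34_H1} in the paper. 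You have also correctly identified where $\sigma\ge 1/2$ enters and why the $H^1$ rate saturates at $\tau^{1/2}$.

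The one genuine gap is your treatment of $e_2$ in $H^1$. You invoke ``the $H^1$-boundedness of $I_N$'' together with the $H^1$ bound on $dB$ from \cref{lem:dB(v)w_2}, but the sine interpolation $I_N$ is not $H^1$-stable within the paper's toolkit: the only estimate available is $\|I_N\phi - P_N\phi\|_{H^1}\lesssim h\,|\phi|_{H^2}$, which controls $\|I_N\phi\|_{H^1}$ only through $\|\phi\|_{H^2}$, and for $d=2,3$ the operator $I_N$ is not even well defined on $H^1$ since $H^1\not\hookrightarrow C^0$ there (recall the local-error arguments are meant to generalize directly to 2D and 3D). This is why the paper instead bounds $\|e_2\|_{H^1}\lesssim \tau^2\,\|dB(\Phi_B^{\theta\tau}(P_N\vini))[B(\Phi_B^{\theta\tau}(P_N\vini))]\|_{H^2}$, and the required $H^2$ bound is \cref{lem:dBvv_H2} — a lemma you never use — whose hypothesis $|w(\vx)|\le C|v(\vx)|$ is verified here because $|B(\Phi_B^{\theta\tau}(P_N\vini))|\lesssim |\Phi_B^{\theta\tau}(P_N\vini)| = |P_N\vini|$ pointwise. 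This pointwise domination is not a technicality: two derivatives of $f(|v|^2)w$ produce terms of size $|v|^{2\sigma-2}|\nabla v|^2|w|$, singular at zeros of $v$ when $\tfrac12\le\sigma<1$, and it is exactly the factor $|w|\lesssim|v|$ that restores the integrable power $|v|^{2\sigma-1}|\nabla v|^2$ handled in \cref{lem:B_2}; since the solutions of interest vanish somewhere (e.g.\ odd data with $\psi(0,t)\equiv 0$), this case cannot be dismissed. In 1D your route can be repaired, since $\|u - I_N u\|_{H^1}\lesssim \|u\|_{H^1}$ does hold for sine interpolation when $d=1>2\cdot\tfrac12\cdot d\,/\,2$, i.e.\ $1>d/2$, but you would need to state and justify that stability estimate, and the argument would still break in 2D and 3D, where the paper's $H^2$-based bound with \cref{lem:dBvv_H2} is the robust mechanism. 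Also, a minor point: \cref{lem:bound_nonlinear_flow} is needed only for $e_2$ (to bound $\|\Phi_B^{\theta\tau}(P_N\vini)\|_{H^2}$), not for $e_1$, whose arguments involve only free flows of the exact solution.
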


\begin{proof}
	Following the notations in the proof of \cref{prop:local_error_sigma<1/2}, we let $ v(t) = \psi(\cdot, t_k + t) $ for $ 0 \leq t \leq \tau $ and $ \vini := v(0) = \psi(\cdot, t_k) $. When $ \sigma \geq 1/2 $, \cref{Linfty_bound_P_N,Linfty_bound_expit} are also valid and we have the same error decomposition \cref{eq:error_decomp}. When $ \sigma \geq 1/2 $, the $ L^2 $ estimate \cref{localerror:L^2_2} follows from the proof of \cref{prop:local_error_sigma<1/2} by replacing \cref{e3_34} with
	\begin{equation}\label{e3_34_sigma>1/2}
		\begin{aligned}
			&\| e_3^3 \|_{L^2} \lesssim \tau \| P_N B(P_N\vini) \|_{H^2} \leq \tau \| B(P_N\vini) \|_{H^2} \leq C(M_2) \tau, \\
			&\| e_3^4 \|_{L^2} \lesssim h^2 \| B(P_N\vini) \|_{H^2} \leq C(M_2) h^2,
		\end{aligned}
	\end{equation}
	where \cref{eq:linear_flow_error_L2}, \cref{eq:proj_inter_error_L2} and \cref{lem:B_2} are used.
	
	In the following, we shall show \cref{localerror_H^1_1}. Using Sobolev embedding {$H^2 \hookrightarrow L^\infty$}, the isometry property of $ e^{i t \Delta} $ and \cref{lem:B_1,lem:B_2}, one gets
	\begin{equation}\label{Bbound}
		 \begin{aligned}
		 	&\| e^{i(s - \sigma)\Delta} B(v(\sigma)) \|_{H^1} = \| B(v(\sigma)) \|_{H^1} \leq C(M_2), \\
		 	&\| e^{i(s - \sigma)\Delta} B(v(\sigma)) \|_{L^\infty} \lesssim \| e^{i(s - \sigma)\Delta} B(v(\sigma)) \|_{H^2} = \| B(v(\sigma)) \|_{H^2} \leq C(M_2).
		 \end{aligned}
	\end{equation}
	Recalling the boundedness of $ e^{it\Delta} $ and $ P_N $, using \cref{lem:dB(v)w_2}, noticing \cref{Linfty_bound_expit} and \cref{Bbound}, we have
	 \begin{align}\label{e_1_dB_H1}
			&\left\| e^{i (\tau - s) \Delta} P_N \left( dB(e^{i(s-\sigma)\Delta}v(\sigma)) [e^{i(s - \sigma)\Delta} B(v(\sigma))] \right) \right\|_{H^1} \notag\\
			&\leq \| dB(e^{i(s-\sigma)\Delta}v(\sigma)) [e^{i(s - \sigma)\Delta} B(v(\sigma))] \|_{H^1} \notag\\
			&\leq C(\| V \|_{W^{1, 4}}, \| e^{i(s-\sigma)\Delta}v(\sigma) \|_{L^\infty \cap H^1}, \| e^{i(s - \sigma)\Delta} B(v(\sigma)) \|_{L^\infty \cap H^1} ) \notag\\
			&\leq C(M_2),
	\end{align}
	which yields, for $ e_1 $ in \cref{e1},
	\begin{align}\label{e1_H1}
			\| e_1 \|_{H^1}
			&\leq \int_0^\tau \int_0^s \left\| e^{i (\tau - s) \Delta} P_N \left( dB(e^{i(s-\sigma)\Delta}v(\sigma)) [e^{i(s - \sigma)\Delta} B(v(\sigma))] \right) \right\|_{H^1} \rmd \sigma \rmd s \notag\\
			&\leq C(M_2) \tau^2.
	\end{align}
	For $ e_2 $ in \cref{e2}, using the estimate for $\phi \in H_0^1(\Omega) \cap H^2(\Omega)$, 
	{\begin{equation}
		\| I_N \phi \|_{H^1} \leq \| \phi \|_{H^1} + \| \phi - I_N \phi \|_{H^1} \lesssim \| \phi \|_{H^1} + h | \phi |_{H^2} \lesssim \| \phi \|_{H^2}, 
	\end{equation}}
	one gets
	\begin{align}\label{est:e2_2}
			\| e_2 \|_{H^1}
			&= \tau^2 \left \| I_N \left(\int_0^1 (1-\theta) \left( dB(\Phi_B^{\theta \tau}(P_N \vini)) [B(\Phi_B^{\theta \tau}(P_N \vini))] \right) \rmd \theta \right) \right \|_{H^1} \notag\\
			&\lesssim \tau^2 \|dB(\Phi_B^{\theta \tau}(P_N \vini)) [B(\Phi_B^{\theta \tau}(P_N \vini))] \|_{H^2}.
	\end{align}
	From \cref{est:e2_2}, noting that
	\begin{equation}
		\left | \left [B(\Phi_B^{\theta \tau}(P_N \vini))\right](\vx) \right | \lesssim \left | \Phi_B^{\theta \tau}(P_N \vini) (\vx) \right | = (P_N v_0)(\vx), \qquad \vx \in \Omega,
	\end{equation}
	and using \cref{lem:dBvv_H2}, we have
	\begin{equation}\label{e2_H1}
		\| e_2 \|_{H^1} \leq C(M_2) \tau^2.
	\end{equation}

	Then we shall estimate $ e_3 $ in \cref{e3}. Similar to \cref{e3_est_1,e3_decomp_1}, it suffices to bound the $ H^1 $-norm of the four terms $ e_3^j (1 \leq j \leq 4) $ defined in \cref{e3_decomp_1}. Using the standard estimates ({see, e.g., \cite{book_spectral,bao2019}}), 
	\begin{align}
		&\| \phi - P_N \phi \|_{H^1} \lesssim h |\phi|_{H^2}, \quad \| I_N \phi - P_N \phi \|_{H^1} \lesssim h |\phi|_{H^2}, \label{eq:proj_inter_error_H1}\\
		&\| \phi - e^{i t \Delta} \phi \|_{H^1} \lesssim \sqrt{t} \| \phi \|_{H^2}, \quad \phi \in H_0^1(\Omega) \cap H^2(\Omega), \label{eq:linear_flow_error_H1}
	\end{align}
	and \cref{lem:diff_B_H1,lem:B_2}, we have
	\begin{align}
		&\begin{aligned}\label{e3_12_H1}
			&\| e_3^1 \|_{H^1} \leq C(M_2) \| e^{is \Delta} \vini - \vini \|_{H^1} \leq C(M_2) \sqrt{\tau} \| \vini \|_{H^2} \leq C(M_2) \sqrt{\tau}, \\
			&\| e_3^2 \|_{H^1} \leq C(M_2) \| \vini - P_N \vini \|_{H^1} \leq C(M_2) h \| \vini \|_{H^2} \leq C(M_2) h,
		\end{aligned}\\
		&\begin{aligned}\label{e3_34_H1}
			&\| e_3^3 \|_{H^1} \lesssim \sqrt{\tau} \| P_N B(P_N\vini) \|_{H^2} \leq \sqrt{\tau} \| B(P_N\vini) \|_{H^2} \leq C(M_2) \sqrt{\tau}, \\
			&\| e_3^4 \|_{H^1} \lesssim h \| B(P_N\vini) \|_{H^2} \leq C(M_2) h,
		\end{aligned}
	\end{align}
	which yields immediately
	\begin{equation}\label{e3_H1}
		\| e_3 \|_{H^1} \leq C(M_2) \tau \left( \sqrt{\tau} + h \right).
	\end{equation}
	Combining \cref{e1_H1,e2_H1,e3_H1}, we obtain \cref{localerror_H^1_1}, which completes the proof.
\end{proof}

\subsection{$ l^\infty $-conditional $ L^2 $- and $ H^1 $-stability}
\begin{proposition}[$ l^\infty $-conditional stability]\label{prop:stability of Phi_AB_sigma>1/2}
	Let $ 0 < \tau <1 $ and $ v, w \in X_N $ such that $ \| v \|_{l^\infty} \leq M $, $ \| w \|_{l^\infty} \leq M $ and $ \| v \|_{H^2} \leq M_1 $. When $ \sigma \geq 1/2 $, we have
	\begin{align}
		&\|  \Phi^\tau(v) -  \Phi^\tau(w) \|_{L^2} \leq  (1 + C_1(M) \tau) \| v - w \|_{L^2}, \label{eq:L2stability_sigma>1/2}\\		
		&\|  \Phi^\tau(v) -  \Phi^\tau(w) \|_{H^1} \leq (1 + C_2(M, M_1, \| V \|_{W^{1, \infty}}) \tau) \| v - w \|_{H^1},\label{eq:H1stability_sigma>1/2}
	\end{align}
\end{proposition}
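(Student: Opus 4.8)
The plan is to prove the two inequalities separately, reusing the grid-point argument of \cref{prop:stability of Phi_AB_sigma<1/2} for \cref{eq:L2stability_sigma>1/2} and doing genuinely new work, entirely at the level of discrete norms, for \cref{eq:H1stability_sigma>1/2}. For the $L^2$ bound I would repeat \cref{reduce_L2_stability,eq:stability_L2_proof} almost verbatim: since $e^{i\tau\Delta}$ is an $L^2$-isometry, $\|\Phi^\tau(v)-\Phi^\tau(w)\|_{L^2}=\|I_N\Phi_B^\tau(v)-I_N\Phi_B^\tau(w)\|_{L^2}$, which by Parseval equals $(h\sum_j|\Phi_B^\tau(v)(x_j)-\Phi_B^\tau(w)(x_j)|^2)^{1/2}$. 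The modulus-one factor $e^{-i\tau V(x_j)}$ cancels, reducing each summand to $|\Phi_{B_2}^\tau(v(x_j))-\Phi_{B_2}^\tau(w(x_j))|$, which I would estimate by \cref{lem:diff_phi_B_2_sigma>1/2}. The only change from the case $0<\sigma\le1/2$ is that the pointwise Lipschitz constant is now governed by $\max\{|v(x_j)|,|w(x_j)|\}^{2\sigma}$ rather than the minimum; bounding this by $M^{2\sigma}$ is exactly what forces the two-sided $l^\infty$ hypothesis and makes the estimate merely $l^\infty$-conditional. Summing over $j$ gives \cref{eq:L2stability_sigma>1/2} with $C_1(M)\sim M^{2\sigma}$.

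For the $H^1$ bound I would again use that $e^{i\tau\Delta}$ commutes with $\nabla$ and preserves $L^2$, so it suffices to bound $\|I_N g\|_{H^1}$ for $g:=\Phi_B^\tau(v)-\Phi_B^\tau(w)$. Working with the equivalent seminorm $\|\nabla\cdot\|_{L^2}$ on $H_0^1$, I would pass to the discrete world through the sine-spectral equivalence $\|\nabla\phi\|_{L^2}\sim\|\dxp\phi\|_{l^2}$, valid for every $\phi\in X_N$ because $\frac{2}{h}\sin(\mu_l h/2)$ and $\mu_l$ are comparable for $1\le l\le N-1$; since $I_N g$ agrees with $g$ at the grid points, this reduces the gradient estimate to bounding the discrete quantity $\|\dxp g\|_{l^2}$, computed from grid values only. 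The decisive advantage is that this route never needs a continuous second derivative, so the semi-smoothness of $f$ (where $f\pp(\rho)\sim\rho^{\sigma-2}$ blows up near $\rho=0$) and the absence of any $H^2$-bound on $w$ cause no trouble. Writing the grid values as $g(x_j)=a_j(v_j-w_j)+w_j(a_j-b_j)$ with $v_j:=v(x_j)$, $a_j=e^{-i\tau(V(x_j)+f(|v_j|^2))}$, $b_j=e^{-i\tau(V(x_j)+f(|w_j|^2))}$ and $|a_j-b_j|\lesssim\tau M^{2\sigma-1}|v_j-w_j|$ by \cref{eq:diff_f}, a discrete product rule for $\dxp$ produces a leading term $a_{j+1}\dxp(v-w)_j$, whose $l^2$-norm equals $\|\dxp(v-w)\|_{l^2}$ because $|a_{j+1}|=1$, plus remainders that each carry an explicit factor $\tau$.

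Each remainder is then bounded by $C\tau$ times a discrete $H^1$-quantity, using three ingredients: $|\dxp a_j|\lesssim\tau(\|\nabla V\|_{L^\infty}+M^{2\sigma-1}|\dxp v_j|)$, which is where the hypothesis $V\in W^{1,\infty}$ enters through $|V(x_{j+1})-V(x_j)|\le h\|\nabla V\|_{L^\infty}$; the algebraic bound $|f\p(|z|^2)z|\lesssim|z|^{2\sigma-1}$ from \cref{eq:bound_f_2sgima-1}; and the discrete counterparts of the Lipschitz estimates in \cref{lem:diff_B_H1}. The one structural trick, needed precisely because $w$ carries no derivative bound, is to replace every stray discrete derivative of $w$ by $\dxp w=\dxp v-\dxp(v-w)$: the $\dxp v$ part is absorbed with $\|\dxp v\|_{l^4}\lesssim\|v\|_{H^2}\le M_1$ together with $\|v-w\|_{l^4}\lesssim\|v-w\|_{H^1}$, while the $\dxp(v-w)$ part is paired with $\|v-w\|_{l^\infty}\le2M$. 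In this way only $v$ — matching the single $H^2$-hypothesis — is ever differentiated against a fixed bound, and the constant never sees $\|w\|_{H^1}$.

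The main obstacle is controlling the few genuinely $l^\infty$-type discrete factors that survive in dimensions two and three, where $H^2\not\hookrightarrow W^{1,\infty}$ and a discrete Sobolev/inverse estimate $\|\phi\|_{l^\infty}\lesssim|\ln h|^{1/2}\|\phi\|_{H^1}$ ($d=2$) or $\|\phi\|_{l^\infty}\lesssim h^{-1/2}\|\phi\|_{H^1}$ ($d=3$) must be invoked; the product of $\tau$ with these losses is exactly what the CFL restriction \cref{cfl} is calibrated to keep bounded (indeed $o(1)$), so that the accumulated remainder stays of the form $C_2(M,M_1,\|V\|_{W^{1,\infty}})\tau\|v-w\|_{H^1}$ rather than the unacceptable $C\|v-w\|_{H^1}$ that a crude inverse inequality $\|\nabla\cdot\|_{L^2}\lesssim h^{-1}\|\cdot\|_{L^2}$ would produce. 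Collecting the $l^2$-bound on $g$ from the $L^2$ argument with this discrete-gradient bound and translating back through $\|\nabla\cdot\|_{L^2}\sim\|\dxp\cdot\|_{l^2}$ then yields \cref{eq:H1stability_sigma>1/2}.
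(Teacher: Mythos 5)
Your $L^2$ argument and the overall architecture of your $H^1$ argument coincide with the paper's proof of \cref{prop:stability of Phi_AB_sigma>1/2}: reduction by the isometry of $e^{i\tau\Delta}$, passage to forward differences via the equivalence \cref{eq:H1_I_N}, extraction of an explicit factor $\tau$ from $|1-e^{i\theta}|\leq|\theta|$, the use of $V\in W^{1,\infty}$ to control $\dxp V$, the rewriting $\dxp w=\dxp v-\dxp(v-w)$ so that only $v$ (which alone carries the $H^2$ hypothesis) and the error are ever differentiated, and the closing H\"older/discrete Gagliardo--Nirenberg/discrete $H^2\hookrightarrow W^{1,4}$ chain. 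However, your final translation step has a genuine flaw. The equivalence \cref{eq:H1_I_N} is two-sided with constants $1$ and $\pi/2$, not an isometry: if you bound $\|\dxp g\|_{l^2}\leq\|\dxp e\|_{l^2}+C\tau(\cdots)$ for $g=\Phi_B^\tau(v)-\Phi_B^\tau(w)$ and $e=v-w$, and then ``translate back,'' you get $\|\nabla I_N g\|_{L^2}\leq\tfrac{\pi}{2}\|\dxp e\|_{l^2}+C\tau(\cdots)\leq\tfrac{\pi}{2}\|\nabla e\|_{L^2}+C\tau(\cdots)$, i.e.\ a leading constant $\pi/2$ rather than $1$. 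That proves $\tfrac{\pi}{2}\|v-w\|_{H^1}+C\tau\|v-w\|_{H^1}$, not \cref{eq:H1stability_sigma>1/2}, and it is useless downstream: Gronwall over $T/\tau$ steps then accumulates $(\pi/2)^{T/\tau}$. The paper avoids this by splitting off the error \emph{exactly} before invoking the equivalence, $I_N\Phi_B^\tau(v)-I_N\Phi_B^\tau(w)=I_N(v-w)+I_N\left[(\Phi_B^\tau(v)-v)-(\Phi_B^\tau(w)-w)\right]$ as in \cref{reduce_to_dxp}, so that only the $O(\tau)$ remainder passes through the lossy constant. Your own pointwise decomposition $g_j=a_je_j+w_j(a_j-b_j)$ contains the fix --- write $a_je_j=e_j+(a_j-1)e_j$, note $|a_j-1|\lesssim\tau$ and $|\dxp a_j|\lesssim\tau(\cdots)$, and apply \cref{eq:H1_I_N} only to the $\tau$-carrying part --- but as written the step fails to produce the required $(1+C\tau)$ form.

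Your last paragraph is also wrong about where the CFL condition lives. \cref{prop:stability of Phi_AB_sigma>1/2} carries no hypothesis \cref{cfl}, and its proof needs none in any dimension: the only derivative-bearing term $h\sum_{j}|\dxp v_j|^2(|e_j|^2+|e_{j+1}|^2)$ is controlled by H\"older together with $h$-uniform discrete embeddings --- $\|e\|_{l^4}\lesssim\|\dxp e\|_{l^2}$ and $\|\dxp v\|_{l^4}\lesssim\|v\|_{H^2}\leq M_1$ in 1D and 2D, and the $(3/2,3)$-H\"older variant with discrete $H^1\hookrightarrow L^6$ and $H^2\hookrightarrow W^{1,3}$ in 3D (see \cref{rem:3D}) --- so no $|\ln h|$ or $h^{-1/2}$ factors survive, precisely because the $l^\infty$ control of \emph{both} $v$ and $w$ is a hypothesis of the proposition. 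The inverse-type discrete Sobolev inequalities (with $|\ln h|$ in 2D, incidentally, not $|\ln h|^{1/2}$) and the restriction \cref{cfl} enter only later, in the induction step of the proof of \cref{thm:sigma>1/2H2}, where the $H^1$ error bound is upgraded to the bound $\|\psi^{m+1}\|_{l^\infty}\leq 1+M_2$ on the numerical solution (see \cref{rem:dgeq2_case_1_thm}). If your stability argument genuinely consumed \cref{cfl}, you would have proved a strictly weaker statement than the one asserted.
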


\begin{proof}
	The $ L^2 $-stability \cref{eq:L2stability_sigma>1/2} can be obtained from \cref{eq:stability_L2_proof} by using \cref{lem:diff_phi_B_2_sigma>1/2} instead of \cref{lem:diff_Phi_B_2_sigma<1/2}. In the following, we show the $ H^1 $-stability \cref{eq:H1stability_sigma>1/2}. By \cref{eq:phi_B_def} and the isometry property of $ e^{i \tau \Delta} $, \cref{eq:H1stability_sigma>1/2} reduces to
	\begin{equation}
		\| I_N \Phi_B^\tau (v) - I_N \Phi_B^\tau (w) \|_{H^1} \leq (1 + C(M, M_1, \| V \|_{W^{1, \infty}}) \tau) \| v - w \|_{H^1}. \label{H1_stability_sigma>1/2}
	\end{equation}
	The proof is based on the following well-known equivalence relation (see, e.g., Lemma 3.2 in \cite{bao2014}): {with $\dxp$ defined in \cref{eq:dxp_def}, }
	\begin{equation}\label{eq:H1_I_N}
		\| \dxp \phi \|_{l^2} \leq \| \nabla I_N \phi \|_{L^2} \leq \frac{\pi}{2} \| \dxp \phi \|_{l^2}, \quad \phi \in X_N, 
	\end{equation}
	which implies, 
	\begin{align}\label{reduce_to_dxp}
		\| I_N \Phi_B^\tau (v) - I_N \Phi_B^\tau (w) \|_{H^1}
		&\leq \| v - w \|_{H^1} + \| I_N (\Phi_B^\tau (v) - v) - I_N (\Phi_B^\tau (w) - w) \|_{H^1} \notag\\
		&\leq \| v - w \|_{H^1} + C \| \dxp (\Phi_B^\tau (v) - v) - \dxp (\Phi_B^\tau (w) - w) \|_{l^2}. 
	\end{align}
	We define
	\begin{align*}
		&v_{j}^\theta = (1-\theta) v_j + \theta v_{j+1}, \quad w_{j}^\theta = (1-\theta) w_j + \theta w_{j+1}, \\
		&V_j^\theta = (1-\theta)V(x_j) + \theta V(x_{j+1}), \quad 0 \leq \theta \leq 1, \quad j = 0, \cdots, N-1.
	\end{align*}
	By some elementary calculations, recalling \cref{eq:G_def} and $ f\p(|z|^2)|z|^2 = \sigma f(|z|^2) $, one gets, for $ 0 \leq j \leq N-1 $,
	\begin{align}\label{dxpv}
			&\dxp (\Phi_B^\tau (v) - v)_j = \dxp \left( v (\expit{v} - 1) \right)_j \notag\\
			&= \frac{1}{h}\int_0^1 \frac{\rmd}{\rmd \theta} \left( v_j^\theta (\expitVjtheta{v_j^\theta} - 1) \right) \rmd \theta \notag\\
			&= \int_0^1 \left[\dxp v_j (\expitVjtheta{v_j^\theta} - 1) - i \tau e^{- i \tau V_j^\theta} v_j^\theta \dxp V(x_j) e^{- i \tau f(|v_j^\theta|^2)} \right. \notag\\
			&\qquad \quad \left. - i \tau e^{- i \tau V_j^\theta} (\sigma f(|v_j^\theta|^2) \dxp v_j + \G{v_j^\theta} \dxp \overline{v_j} )e^{- i \tau f(|v_j^\theta|^2)} \right] \rmd \theta. 
	\end{align}
	Similarly, for $ 0 \leq j \leq N-1 $,
	\begin{align}\label{dxpw}
			&\dxp (\Phi_B^\tau (w) - w)_j \notag\\
			&= \int_0^1 \left[\dxp w_j (\expitVjtheta{w_j^\theta} - 1) - i \tau e^{- i \tau V_j^\theta} w_j^\theta \dxp V(x_j) e^{- i \tau f(|w_j^\theta|^2)} \right. \notag\\
			&\qquad \quad \left. - i \tau e^{- i \tau V_j^\theta} (\sigma f(|w_j^\theta|^2) \dxp w_j + \G{w_j^\theta} \dxp \overline{w_j} )e^{- i \tau f(|w_j^\theta|^2)} \right] \rmd \theta.
	\end{align}
	We define the function $ e \in Y_N $ with
	\begin{equation*}
		e_j = v_j - w_j, \quad j = 0, \cdots, N.
	\end{equation*}
	Subtracting \cref{dxpw} from \cref{dxpv}, for $ 0 \leq j \leq N-1 $, we have
	\begin{align}\label{dxp_decomp}
			&\left| \dxp (\Phi_B^\tau (v) - v)_j - \dxp (\Phi_B^\tau (w) - w)_j \right| \notag\\
			&\leq \int_0^1 \left [ \left| \dxp v_j (\expitVjtheta{v_j^\theta} - 1) - \dxp w_j (\expitVjtheta{w_j^\theta} - 1) \right| \right. \notag\\
			&\quad + \tau \left| \dxp V(x_j) \right| \left| v_j^\theta  e^{- i \tau f(|v_j^\theta|^2)} - w_j^\theta e^{- i \tau f(|w_j^\theta|^2)} \right| \notag\\
			&\quad + \sigma \tau \left| \dxp v_j f(|v_j^\theta|^2) e^{- i \tau f(|v_j^\theta|^2)} - \dxp w_j f(|w_j^\theta|^2)  e^{- i \tau f(|w_j^\theta|^2)} \right| \notag\\
			&\quad \left.+ \tau \left| \dxp \overline{v_j} \G{v_j^\theta} e^{- i \tau f(|v_j^\theta|^2)} - \dxp \overline{w_j} \G{w_j^\theta} e^{- i \tau f(|w_j^\theta|^2)} \right| \right] \rmd \theta \notag\\
			&=: \int_0^1 \left (J^1_j + J^2_j + J^3_j + J^4_j \right ) \rmd \theta.
	\end{align}
	For $ J^1_j $, by \cref{eq:diff_f}, one gets
	\begin{align}\label{J1}
			J^1_j
			&\leq \left| \dxp v_j \right| \left| \expitVjtheta{v_j^\theta} - \expitVjtheta{w_j^\theta} \right| \notag\\
			&\quad + \left| \dxp v_j - \dxp w_j \right| \left| \expitVjtheta{w_j^\theta} - 1 \right| \notag\\
			&\leq \tau \left| \dxp v_j \right| \left| f(|v_j^\theta|^2) - f(|w_j^\theta|^2) \right| + \tau \left| V_j^\theta + f(|w_j^\theta|^2) \right| \left| \dxp v_j - \dxp w_j \right| \notag\\
			&\leq  C(M) \tau \left| \dxp v_j \right| (|e_j| + |e_{j+1}|) + C(M, \| V \|_{L^\infty}) \tau |\dxp e_j|.
	\end{align}
	For $ J^2_j $, recalling \cref{eq:phi_B_2_def}, by \cref{lem:diff_phi_B_2_sigma>1/2} and $ 0 < \tau < 1 $, one gets
	\begin{align}\label{J2}
			J^2_j
			&= \tau \left| \dxp V(x_j) \right| \left | \Phi_{B_2}^\tau(v_j^\theta) - \Phi_{B_2}^\tau(w^\theta_j) \right | \notag \\
			&\leq \tau \left| \dxp V(x_j) \right| (1+C(M)\tau) (|e_j| + |e_{j+1}|) \notag \\
			&\leq C(M) \tau \left| \dxp V(x_j) \right| (|e_j| + |e_{j+1}|).
	\end{align}
	For $ J^3_j $, by \cref{eq:diff_f} and $ 0 < \tau < 1 $, one gets
	\begin{align}\label{J3}
			J^3_j
			&\lesssim \tau \left| \dxp v_j \right| \left| f(|v_j^\theta|^2) e^{- i \tau f(|v_j^\theta|^2)} - f(|w_j^\theta|^2)  e^{- i \tau f(|w_j^\theta|^2)} \right| \notag\\
			&\quad + \tau \left| \dxp v_j - \dxp w_j \right| \left| f(|w_j^\theta|^2)  e^{- i \tau f(|w_j^\theta|^2)} \right| \notag\\
			&\leq \tau \left| \dxp v_j \right| \left ( \left| f(|v_j^\theta|^2) - f(|w_j^\theta|^2) \right| + | f(|w_j^\theta|^2)| \left| e^{- i \tau f(|v_j^\theta|^2)} - e^{- i \tau f(|w_j^\theta|^2)} \right| \right ) \notag\\
			&\quad +\tau C(M) \left| \dxp v_j - \dxp w_j \right| \notag\\
			&\leq \tau \left| \dxp v_j \right| C(M)(1+\tau) |v_j^\theta - w_j^\theta| + \tau C(M) \left| \dxp v_j - \dxp w_j \right| \notag\\
			&\leq C(M) \tau \left| \dxp v_j \right| (|e_j| + |e_{j+1}|) + C(M) \tau \left| \dxp e_j \right|.
	\end{align}
	Similar to \cref{J3}, using \cref{eq:diff_G} instead of \cref{eq:diff_f}, one gets, for $ J^4_j $, 
	\begin{equation}\label{J4}
		J_j^4 \leq C(M) \tau \left| \dxp v_j \right| (|e_j| + |e_{j+1}|) + C(M) \tau \left| \dxp e_j \right|. 
	\end{equation}
	Plugging \cref{J1,J2,J3,J4} into \cref{dxp_decomp}, we have
	\begin{align*}
		 &\left| \dxp (\Phi_B^\tau (v) - v)_j - \dxp (\Phi_B^\tau (w) - w)_j \right| \\
		 &\leq C(M) \tau \left(\left| \dxp V(x_j) \right| + \left| \dxp v_j \right| \right) (|e_j| + |e_{j+1}|) + C(M, \| V \|_{L^\infty}) \tau \left| \dxp e_j \right|,
	\end{align*}
	which yields
	\begin{align}\label{est_dxp}
			&\| \dxp (\Phi_B^\tau (v) - v) - \dxp (\Phi_B^\tau (w) - w) \|_{l^2}^2 \notag\\
			&= h\sum_{j=0}^{N-1} \left| \dxp (\Phi_B^\tau (v) - v)_j - \dxp (\Phi_B^\tau (w) - w)_j \right|^2 \notag\\
			&\leq C(M) \tau^2 h \sum_{j=0}^{N-1} \left(\left| \dxp V(x_j) \right|^2 + \left| \dxp v_j \right|^2 \right) (|e_j|^2 + |e_{j+1}|^2) \notag\\
			&\quad + C(M, \| V \|_{L^\infty})\tau^2 h \sum_{j=0}^{N-1}  \left| \dxp e_j \right|^2 \notag\\
			&\leq C(M) \tau^2 \left ( \| \nabla  V \|_{L^\infty}^2 \| e \|_{l^2}^2 + h \sum_{j=0}^{N-1} \left| \dxp v_j \right|^2 (|e_j|^2 + |e_{j+1}|^2) \right) \notag\\
			&\quad +  C(M, \| V \|_{L^\infty})\tau^2 \| \dxp e \|_{l^2}^2.
	\end{align}
	When $ d=1 $, one has $ |\dxp v_j| \leq \| \nabla v \|_{L^\infty} \leq C(M_1) $, which yields directly that
	\begin{equation}\label{1D_est}
		h \sum_{j=0}^{N-1} \left| \dxp v_j \right|^2 (|e_j|^2 + |e_{j+1}|^2) \leq C(M_1) \| e \|_{l^2}^2.
	\end{equation}
	However, \cref{1D_est} cannot be directly generalized to 2D and 3D without assuming higher regularity on $ v $. Here, we present an alternative approach that can be generalized to 2D and 3D (see also \cref{rem:3D}). Using the discrete Gagliardo-Nirenberg inequality ({(2.4) in \cite{FD} or (3.3) in \cite{bao2013}}) and the discrete Poincar\'e inequality ((3.3) in \cite{bao2013}), we have
	\begin{equation}\label{discrete G-N}
		\| \phi \|_{l^4} \lesssim \| \phi \|_{l^2}^\frac{3}{4} \| \dxp \phi \|_{l^2}^\frac{1}{4} \lesssim \| \dxp \phi\|_{l^2}, \quad \phi \in Y_N,
	\end{equation}
	which implies, by first applying H\"older's inequality in \cref{1D_est},
	\begin{equation}\label{S1_Holder}
		\begin{aligned}
			h \sum_{j=0}^{N-1} \left| \dxp v_j \right|^2 (|e_j|^2 + |e_{j+1}|^2) \lesssim \| \dxp v \|_{l^4}^2 \| e \|_{l^4}^2 \lesssim \| \dxp v \|_{l^4}^2 \| \dxp e \|_{l^2}^2.
		\end{aligned}
	\end{equation}
	Using the following discrete version of the Sobolev embedding $ H^2 \hookrightarrow W^{1, 4} $ ({see (3.3) in \cite{bao2013} and also the appendix})
	\begin{equation}\label{discrete_embedding}
		\| \dxp \phi \|_{l^4} \lesssim \| \phi \|_{H^2}, \quad \phi \in X_N,
	\end{equation}
	we get $ \| \dxp v \|_{l^4} \lesssim \| v \|_{H^2} \leq M_1 $, which plugged into \cref{S1_Holder} yields from \cref{est_dxp}
	\begin{align}\label{est_dxp_conclusion}
		&\| \dxp (\Phi_B^\tau (v) - v) - \dxp (\Phi_B^\tau (w) - w) \|_{l^2}^2 \notag \\
		&\leq C(M, M_1, \| V \|_{W^{1, \infty}}) \left (\| e \|_{l^2}^2 + \| \dxp e \|_{l^2}^2 \right ).
	\end{align}
	From \cref{est_dxp_conclusion}, using the discrete Poincar\'e inequality and \cref{eq:I_N_L2,eq:H1_I_N}, we have
	\begin{align}
			\| \dxp (\Phi_B^\tau (v) - v) - \dxp (\Phi_B^\tau (w) - w) \|_{l^2}^2
			&\leq C(M, M_1, \| V \|_{W^{1, \infty}}) \| \dxp e \|_{l^2}^2 \notag\\
			&\leq  C(M, M_1, \| V \|_{W^{1, \infty}}) \| \nabla I_N e \|_{L^2}^2,
	\end{align}
	which plugged into \cref{reduce_to_dxp} yields \cref{H1_stability_sigma>1/2}, and completes the proof.
\end{proof}

\begin{remark}\label{rem:3D}
	The 2D case follows exactly \cref{discrete G-N,S1_Holder,discrete_embedding}. The proof of \cref{discrete_embedding} in 2D proceeds similarly to our proof in 1D in the appendix by following the proof of (3.3) in \cite{bao2013} with additional attention paid to the boundary terms. The 3D case follows \cref{discrete G-N,S1_Holder,discrete_embedding} with slight modification: using H\"older's inequality with index $ (3/2, 3) $ in \cref{S1_Holder}. Then the discrete version of $ H^1 \hookrightarrow L^6 $ and $ H^2 \hookrightarrow W^{1, 3} $ in 3D are needed. The proof of the first one can be found in \cite{dGN} while the proof of the second one will follow the proof of \cref{discrete_embedding} in 2D, which is the reason why we modify the estimates in 3D. 
\end{remark}

\subsection{Proof of \cref{eq:thm2_part1} in \cref{thm:sigma>1/2H2}}
With \cref{prop:local_error_sigma>1/2,prop:stability of Phi_AB_sigma>1/2}, we are able to obtain \cref{eq:thm2_part1}.
\begin{proof}[Proof of \cref{eq:thm2_part1} in \cref{thm:sigma>1/2H2}]
	Following the proof of \cref{thm:sigma>1/2H2}, we only need to estimate $ e^k = I_N \psi^k - P_N \psi(\cdot, t_k) $ for $ 0 \leq k \leq T/\tau $. We shall first prove the error estimate in $ H^1 $ norm by the standard argument of the mathematical induction. Replacing $ \| \cdot \|_{L^2} $ with $ \| \cdot \|_{H^1} $ in \cref{eq:error_propagation}, one has for $ 0 \leq k \leq T/\tau - 1 $,
	\begin{align}\label{eq:error_propagation_H1}
		\| e^{k+1} \|_{H^1} 
		&\leq  \| \Phi^\tau (I_N \psi^{k}) - \Phi^\tau (P_N \psi(\cdot, t_k)) \|_{H^1} \notag \\
		&\quad + \| \Phi^\tau (P_N \psi(\cdot, t_k)) - P_N \psi(\cdot, t_{k+1}) \|_{H^1}.
	\end{align}
	When $ k = 0 $, by \cref{eq:proj_inter_error_H1}, one gets
	\begin{equation*}
		\| e^0 \|_{H^1} = \| I_N \psi_0 - P_N \psi_0 \|_{H^1} \lesssim h \| \psi_0 \|_{H^2} \leq C(M_2) h, \  \| \psi^0 \|_{l^\infty} \leq \| \psi_0 \|_{L^\infty} \leq 1 + M_2.
	\end{equation*}
	We assume that for $ 0 \leq k \leq m \leq T/\tau-1 $,
	\begin{equation}\label{eq:assumption}
		\| e^k \|_{H^1} \lesssim \tau^\frac{1}{2} + h, \quad \| \psi^k \|_{l^\infty} \leq 1 + M_2.
	\end{equation}
	We shall prove \cref{eq:assumption} for $ m+1 $. From \cref{eq:error_propagation_H1}, using \cref{eq:H1stability_sigma>1/2,localerror_H^1_1}, and noting the assumption \cref{eq:assumption}, we have
	\begin{equation}\label{error_eq}
		\| e^{m+1} \|_{H^1} \leq (1 + C_1 \tau) \| e^m \|_{H^1} + C_2 \tau\left(\tau^\frac{1}{2} + h\right),
	\end{equation}
	where $ C_1 $ and $ C_2 $ are the constants in \cref{eq:H1stability_sigma>1/2,localerror_H^1_1} respectively, which depend exclusively on $ M_2 $ and $ \| V \|_{W^{1, \infty}} $. From \cref{error_eq}, standard discrete Gronwall's inequality yields
	\begin{equation}\label{H^1_m+1}
		\| e^{m+1} \|_{H^1} \leq 2e^{C_0T}C_1\left(\tau^\frac{1}{2} + h\right).
	\end{equation}
	Recalling that $ e^{k} = I_N \psi^k - P_N\psi(t_k) $ and $ \| \psi(\cdot, t_k) \|_{L^\infty} \leq M_2 $, using the inverse inequality $ \| \phi \|_{L^\infty} \lesssim h^{-1/2} \| \phi \|_{L^2} $, $ \phi \in X_N $ {\cite{book_spectral}}, we have
	\begin{equation*}
		\begin{aligned}
			\| \psi^{m+1} \|_{l^\infty}
			&= \| I_N \psi^{m+1} \|_{l^\infty} \leq \| e^{m+1} \|_{l^\infty} + \| P_N \psi(\cdot, t_{m+1}) \|_{l^\infty} \\
			&\leq \| e^{m+1} \|_{l^\infty} + \| \psi(\cdot, t_{m+1}) - P_N \psi(\cdot, t_{m+1}) \|_{l^\infty} + \| \psi(\cdot, t_{m+1}) \|_{l^\infty} \\
			&\leq \| e^{m+1} \|_{l^\infty} +   h^{-\frac{1}{2}}\| \psi(\cdot, t_{m+1}) - P_N \psi(\cdot, t_{m+1}) \|_{L^2} + M_2.
		\end{aligned}
	\end{equation*}
	Hence, for $ \tau \leq \tau_0 $ and $ h \leq h_0 $ with $ \tau_0>0 $ and $ h_0>0 $ depending on $ M_2 $ and $ T $, by Sobolev embedding {$H^1 \hookrightarrow L^\infty$ in 1D}, and \cref{H^1_m+1,eq:proj_inter_error_L2}, we have
	\begin{equation}\label{Linfty_m+1}
		\| \psi^{m+1} \|_{l^\infty} \leq C\| e^{m+1} \|_{H^1} + Ch^{2-1/2} + M_2 \leq 1 + M_2.
	\end{equation}
	Combining \cref{H^1_m+1,Linfty_m+1} proves \cref{eq:assumption} for $ k = m+1 $ and thus for all $ 0 \leq k \leq T/\tau $ by mathematical induction. With the $ l^\infty $-bound of the numerical solution, the $ L^2 $ estimate of $ e^k $ follows the proof of \cref{thm:sigma<1/2} by using \cref{localerror:L^2_2,eq:L2stability_sigma>1/2}, which completes the proof of \cref{eq:thm2_part1}.
\end{proof}
\begin{remark}\label{rem:dgeq2_case_1_thm}
	In 2D and 3D, we no longer have $ H^1 \hookrightarrow L^\infty $. To obtain the $ l^\infty $-bound of $ \psi^{m+1} $ in \cref{Linfty_m+1}, we use the discrete Sobolev inequalities as in \cite{bao2012,bao2013,bao2014}
	\begin{equation*}
		\| v \|_{l^\infty} \leq C|\ln h|\, \| I_N v \|_{H^1}, \quad \| w \|_{l^\infty} \leq Ch^{-1/2} \| I_N w \|_{H^1},
	\end{equation*}
	where $ v $ and $ w $ are 2D and 3D mesh functions with zero at the boundary, respectively, and the interpolation operator $ I_N $ can be defined similarly in 2D and 3D as in 1D. Thus by requiring that the time step size $ \tau $ satisfies the additional assumption \cref{cfl}, we can control the $ l^\infty $-norm of the numerical solution.
\end{remark}

\subsection{Proof of \cref{eq:thm2_part2} in \cref{thm:sigma>1/2H2}}
In the following, we assume that $ 1/2 < \sigma < 1 $, $ V \in H^3(\Omega) $, $ \nabla V \in H_0^1(\Omega) $, $ \psi \in C([0, T]; H^3_\ast(\Omega)) \cap C^1([0, T]; H^1(\Omega)) $ and let
\begin{equation}\label{M3}
	M_3 := \max\left\{ \| \psi \|_{L^\infty([0, T]; H^3)}, \| \psi \|_{L^\infty([0, T]; L^\infty)}, \| V \|_{H^3} \right\}.
\end{equation}
We first show an analogous result of \cref{lem:fractional_error_1}.
\begin{lemma}\label{lem:fractional_error_2}
	Let $ \phi \in X_N $ such that $ \| \phi \|_{H^3} \leq M $ and let $ 0<\tau<1 $ and $ 0<h<1 $. Assume that $ V \in H^3(\Omega) $ and $ \nabla V \in H_0^1(\Omega) $. When $ 1/2 < \sigma < 1 $, we have
	\begin{align}
		&\| (I-e^{i \tau \Delta}) P_N B(\phi) \|_{H^1} \leq C_1(M, \| V \|_{H^3}) \tau^{\sigma}, \label{eq:free_schrodinger_error_2}\\
		&\| I_N B(\phi) - P_N B(\phi) \|_{H^1} \leq C_2(M, \| V \|_{H^3}) h^{2\sigma}. \label{eq:inter_error_2}
	\end{align}
\end{lemma}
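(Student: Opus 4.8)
The plan is to mirror the proof of \cref{lem:fractional_error_1}, but now working in the $ H^1 $-norm with $ H^3 $-regularity and replacing the bound \cref{eq:Be_H2} by its higher-order counterpart \cref{eq:Be_H3}. Since $ e^{i\tau\Delta} $ and $ P_N $ commute and $ P_N $ is a contraction on $ H_0^1(\Omega) $ in the equivalent norm $ \| \nabla \cdot \|_{L^2} $, while $ B(\phi) \in H_0^1(\Omega) $ for $ \phi \in X_N $, it suffices for \cref{eq:free_schrodinger_error_2} to bound $ \| (I - e^{i\tau\Delta}) B(\phi) \|_{H^1} $; for \cref{eq:inter_error_2} no such reduction is needed. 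I would then split $ B(\phi) = -iV\phi - if(|\phi|^2)\phi $ and treat the potential and nonlinear parts separately.

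For the potential term I would first note that $ V\phi \in H^3(\Omega) $, because $ H^3 $ is an algebra for $ d \le 3 $, and moreover $ V\phi \in H_*^3(\Omega) $: indeed $ (V\phi)|_{\partial\Omega} = 0 $ since $ \phi \in X_N $ vanishes on $ \partial\Omega $, and $ \Delta(V\phi)|_{\partial\Omega} = 2\nabla V \cdot \nabla\phi|_{\partial\Omega} = 0 $ because $ \nabla V \in H_0^1(\Omega) $ forces $ \nabla V|_{\partial\Omega} = 0 $ (this is exactly where the hypothesis $ \nabla V \in H_0^1 $ enters). Hence the full-order linear-flow and interpolation estimates $ \| (I - e^{i\tau\Delta}) w \|_{H^1} \lesssim \tau \| w \|_{H^3} $ and $ \| (I_N - P_N) w \|_{H^1} \lesssim h^2 |w|_{H^3} $ (the $ m = 3 $ analogues of \cref{eq:linear_flow_error_H1,eq:proj_inter_error_H1}, standard for the sine pseudospectral method) apply with $ w = V\phi $, yielding contributions $ \lesssim \tau \le \tau^\sigma $ and $ \lesssim h^2 \le h^{2\sigma} $ since $ \sigma < 1 $ and $ \tau, h < 1 $.

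The nonlinear term is the crux. Using the regularization I would write $ f(|\phi|^2)\phi = g(|\phi|^2)\phi + \fe(|\phi|^2)\phi $ with $ g = f - \fe $. The regularized part $ \fe(|\phi|^2)\phi $ again lies in $ H_*^3(\Omega) $ (its boundary and Laplacian traces vanish because $ \phi|_{\partial\Omega} = 0 $ and $ \fe(0) = 0 $), so the same full-order estimates combined with \cref{eq:Be_H3} give contributions $ \lesssim \tau\,\vep^{-(2-2\sigma)} $ and $ \lesssim h^2\,\vep^{-(2-2\sigma)} $. For the non-smooth remainder $ g(|\phi|^2)\phi $ I would use only the crude bounds $ \| (I - e^{i\tau\Delta}) w \|_{H^1} \le 2\|w\|_{H^1} $ and, for the interpolation, the discrete estimates \cref{eq:I_N_L2,eq:H1_I_N} together with the $ H^1 $-contractivity of $ P_N $. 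The key is the $ \vep $-sensitive bound $ \| g(|\phi|^2)\phi \|_{H^1} \lesssim \vep^{2\sigma} $: the $ L^2 $-part is $ \lesssim \vep^{1+2\sigma} $ by \cref{eq:f-fe1}, while for the gradient $ \nabla(g(|\phi|^2)\phi) = g(|\phi|^2)\nabla\phi + g'(|\phi|^2)(\phi^2\nabla\overline{\phi} + |\phi|^2\nabla\phi) $ I would use $ |g(\rho)| \lesssim \vep^{2\sigma}\mathbbm{1}_{\rho<\vep^2} $ and $ |\rho\,g'(\rho)| \le |\rho f'(\rho)| + |\rho\fe'(\rho)| \lesssim \rho^\sigma $ (the first estimate from $ f'(\rho)\rho = \sigma f(\rho) $, the second from \cref{f_1}), so that every summand is pointwise $ \lesssim \vep^{2\sigma}\mathbbm{1}_{|\phi|<\vep}\,|\nabla\phi| $ and hence $ \lesssim \vep^{2\sigma}\|\nabla\phi\|_{L^2} $ in $ L^2 $. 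For the discrete gradient I would bound $ \|\dxp(g(|\phi|^2)\phi)\|_{l^2} \le \|\partial_x(g(|\phi|^2)\phi)\|_{L^2} \lesssim \vep^{2\sigma} $ via the fundamental theorem of calculus and Cauchy–Schwarz on each grid cell, which is precisely what lets the interpolation error tolerate the low regularity of $ g $.

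Collecting the pieces gives, for \cref{eq:free_schrodinger_error_2}, a bound $ C(M,\|V\|_{H^3})\inf_{0<\vep<1}(\vep^{2\sigma} + \tau\,\vep^{-(2-2\sigma)}) $, optimized at $ \vep = \tau^{1/2} $ to yield $ \tau^\sigma $; and for \cref{eq:inter_error_2} a bound $ C\inf_{0<\vep<1}(\vep^{2\sigma} + h^2\,\vep^{-(2-2\sigma)}) $, optimized at $ \vep = h $ to yield $ h^{2\sigma} $. I expect the main obstacle to be controlling the gradient of the non-smooth remainder without losing powers of $ \vep $ — this is where the identity $ \rho f'(\rho) = \sigma f(\rho) $ and the sharp bound $ |\rho g'(\rho)| \lesssim \rho^\sigma $ are essential — and, for the interpolation estimate, justifying that the discrete $ l^2 $-norm of the forward difference of a merely $ H^1 $ function is controlled by its continuous $ L^2 $ gradient.
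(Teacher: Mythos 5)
Your proposal is correct and follows essentially the same route as the paper's proof: the same treatment of the potential term via $V\phi \in H_*^3(\Omega)$, the same regularization split with \cref{eq:Be_H3} for the regularized part, the same pointwise bounds $|f(\rho)-\fe(\rho)| + |\rho f'(\rho)-\rho\fe'(\rho)| \lesssim \vep^{2\sigma}\mathbbm{1}_{\rho<\vep^2}$ (resting on $\rho f'(\rho)=\sigma f(\rho)$ and \cref{f_1}) for the remainder, and the same optimizations $\vep=\tau^{1/2}$ and $\vep=h$. The only harmless deviations are that you verify the boundary compatibility explicitly (the paper merely notes $V\phi \in H_*^3$), and that you bound $\| \dxp (g(|\phi|^2)\phi) \|_{l^2} \leq \| \partial_x (g(|\phi|^2)\phi) \|_{L^2}$ by cell-wise FTC and Cauchy--Schwarz, whereas the paper uses the chord representation $\phi_j^\theta=(1-\theta)\phi_j+\theta\phi_{j+1}$ leading to \cref{dxp_est_reg} --- both yield the same $\vep^{2\sigma}\|\nabla\phi\|_{L^2}$ bound.
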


\begin{proof}
	Similar to \cref{Vphi_reg_e}, noting that $ V \phi \in H_\ast^{3}(\Omega) $, we have
	\begin{equation}\label{Vphi_e}
		\begin{aligned}
			&\| (I-e^{i \tau \Delta}) (V \phi)  \|_{H^1} \lesssim \tau \| V \|_{H^3}\| \phi \|_{H^3}, \\
			&\| (I_N - P_N) (V \phi) \|_{H^1} \lesssim h^2\| V \|_{H^3}\| \phi \|_{H^3}.
		\end{aligned}
	\end{equation}
	Following \cref{f-fe+fe,f_decomp} with $ \| \cdot \|_{H^1} $ replacing $ \| \cdot \|_{L^2} $ and using \cref{eq:Be_H3}, we have
	\begin{equation}\label{exp_error}
		\| (I-e^{i \tau \Delta}) (f(|\phi|^2)\phi) \|_{H^1} \leq 2\| f(|\phi|^2)\phi - \fe(|\phi|^2)\phi \|_{H^1} + C(M) \frac{\tau}{\vep^{2-2\sigma}}.
	\end{equation}
	By direct calculation, recalling \cref{eq:G_def}, one gets
	\begin{align}\label{eq:nabla_diff_f_fe}
		&\nabla [f(|\phi|^2)\phi - \fe(|\phi|^2)\phi]
		= (f(|\phi|^2) - \fe(|\phi|^2))\nabla \phi \notag \\
		&+ (f\p(|\phi|^2)|\phi|^2 - \fe\p(|\phi|^2)|\phi|^2)\nabla \phi + (G(\phi) - \fe\p(|\phi|^2)\phi^2) \nabla \overline{\phi}. 
	\end{align}
	{Noting that $ f\p(|z|^2)|z|^2 = \fe\p(|z|^2)|z|^2$, $ G(z) = \fe\p(|z|^2)z^2 $ when $ |z|\geq\vep $ and $ f\p(|z|^2)|z|^2 +  \fe\p(|z|^2)|z|^2 + |G(z)| + |\fe\p(|z|^2)z^2| \lesssim \vep^{2\sigma} $ when $ |z| < \vep $, one gets
	\begin{equation*}
		\left|f\p(|z|^2)|z|^2 - \fe\p(|z|^2)|z|^2\right| \lesssim \vep^{2\sigma} \mathbbm{1}_{|z|<\vep}, \quad \left|G(z) - \fe\p(|z|^2)z^2\right| \lesssim \vep^{2\sigma} \mathbbm{1}_{|z|<\vep}, \quad z \in \C, 
	\end{equation*}}
	which together with \cref{eq:f-fe1} applied to \cref{eq:nabla_diff_f_fe} yields 
	\begin{equation}\label{ffeH1est2}
		\| \nabla [f(|\phi|^2)\phi - \fe(|\phi|^2)\phi] \|_{L^2} \lesssim \vep^{2\sigma} \| \nabla \phi \mathbbm{1}_{|\phi|<\vep} \|_{L^2} \leq \vep^{2\sigma} \| \phi \|_{H^1}.
	\end{equation}
	Plugging \cref{ffeH1est2} into \cref{exp_error}, we have
	\begin{equation*}
		\| (I-e^{i \tau \Delta}) (f(|\phi|^2)\phi) \|_{H^1} \leq C(M)\inf_{0<\vep<1} \left(\vep^{2\sigma} + \frac{\tau}{\vep^{2-2\sigma}} \right) \leq C(M)\tau^{\sigma},
	\end{equation*}
	which combined with \cref{Vphi_e} yields \cref{eq:free_schrodinger_error_2}. 
	
	Then we shall show \cref{eq:proj_inter_error_H1}. Following \cref{f_decomp2} with $ \| \cdot \|_{H^1} $ replacing $ \| \cdot \|_{L^2} $, using the standard estimates of $ P_N $, and \cref{ffeH1est2,eq:Be_H3}, one gets
	\begin{align}\label{IP_decomp_reg}
			&\| (I_N - P_N) (f(|\phi|^2)\phi) \|_{H^1} \notag\\
			&\leq \| I_N(f(|\phi|^2)\phi - \fe(|\phi|^2)\phi) \|_{H^1} + \vep^{2\sigma} \| \phi \|_{H^1} + h^2 \frac{C(M)}{\vep^{2-2\sigma}}.
	\end{align}
	Using \cref{eq:H1_I_N}, one gets
	\begin{equation}\label{nabla_to_dxp_reg}
		\| \nabla I_N(f(|\phi|^2)\phi - \fe(|\phi|^2)\phi) \|_{L^2} \lesssim \| \dxp (f(|\phi|^2)\phi - \fe(|\phi|^2)\phi) \|_{l^2}.
	\end{equation}
	Let $ \phi_j^\theta = (1-\theta)\phi_j + \theta \phi_{j+1} $ for $ 0 \leq \theta \leq 1 $ and $ 0 \leq j \leq N-1 $, direct calculation gives
	\begin{align}
		&\dxp \left (f(|\phi_j|^2)\phi_j - \fe(|\phi_j|^2)\phi_j \right )
		= \frac{1}{h}\int_0^1 \frac{\rmd}{\rmd \theta} \left (f(|\phi_j^\theta|^2)\phi_j^\theta - \fe(|\phi_j^\theta|^2)\phi_j^\theta \right ) \rmd \theta \notag \\
		&= \int_0^1 \left [ \left ( \left (f(|\phi_j^\theta|^2) + f\p(|\phi_j^\theta|^2)|\phi_j^\theta|^2 \right ) - \left (\fe(|\phi_j^\theta|^2) + \fe\p(|\phi_j^\theta|^2)|\phi_j^\theta|^2 \right ) \right ) \dxp \phi_j \right. \notag \\
		&\qquad\quad \left. + (G(\phi_j^\theta) - \fe\p(|\phi_j^\theta|^2)(\phi_j^\theta)^2) \dxp \overline{\phi_j} \right ] \rmd \theta,
	\end{align}
	which implies, {similar to the way we obtain \cref{ffeH1est2} from \cref{eq:nabla_diff_f_fe}},
	\begin{equation}\label{dxp_est_reg}
		\left | \dxp (f(|\phi_j|^2)\phi_j - \fe(|\phi_j|^2)\phi_j) \right | \lesssim \vep^{2\sigma} |\dxp \phi_j|.
	\end{equation}
	From \cref{nabla_to_dxp_reg}, using \cref{dxp_est_reg} and recalling \cref{eq:H1_I_N} and $ \phi \in X_N $, we obtain
	\begin{equation}
		\begin{aligned}
			\| \nabla I_N(f(|\phi|^2)\phi - \fe(|\phi|^2)\phi) \|_{L^2} \lesssim \vep^{2\sigma} \| \dxp \phi \|_{l^2} \leq \vep^{2\sigma} \| \nabla \phi \|_{L^2},
		\end{aligned}
	\end{equation}
	which plugged into \cref{IP_decomp_reg} yields
	\begin{equation*}
		\| (I_N-P_N) (f(|\phi|^2)\phi) \|_{H^1} \leq C(M)\inf_{0<\vep<1} \left(\vep^{2\sigma} + \frac{h^2}{\vep^{2-2\sigma}} \right) \leq C(M) h^{2\sigma},
	\end{equation*}
	which combined with \cref{Vphi_e} yields \cref{eq:inter_error_2} and completes the proof.
\end{proof}

\begin{proposition}[local truncation error]\label{prop:local_error_H3}
	Assume that $ V \in H^3(\Omega) $, $ \nabla V \in H_0^1(\Omega) $, $ \psi \in C([0, T]; H^3_\ast(\Omega)) \cap C^1([0, T]; H^1(\Omega)) $ and $ 1/2 < \sigma < 1  $. For $ 0 \leq k \leq T/\tau -1 $, we have
	\begin{equation*}
		\| P_N \psi(\cdot, t_{k+1}) - \Phi^\tau( P_N \psi(\cdot, t_k) ) \|_{H^1(\Omega)} \leq C(M_3) \tau \left(\tau^\sigma + h^{2\sigma} \right).
	\end{equation*}
\end{proposition}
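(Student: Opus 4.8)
The plan is to follow the architecture of the proof of \cref{prop:local_error_sigma>1/2} essentially verbatim, keeping the same notation $v(t)=\psi(\cdot,t_k+t)$, $\vini=v(0)=\psi(\cdot,t_k)$, and the same error splitting $P_N\psi(t_{k+1})-\Phi^\tau(P_N\psi(t_k))=e_1-e_2+e_3$ from \cref{eq:error_decomp}, with $e_1,e_2,e_3$ as in \cref{e1,e2,e3}. The only genuine change is that the dominant part of $e_3$ is now controlled through the $\sigma$-sensitive $H^1$ bounds of \cref{lem:fractional_error_2} rather than through the half-order bounds used before, and that all regularity bookkeeping is upgraded from $M_2$ to $M_3$. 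Since $\tau,h<1$ and $1/2<\sigma<1$, I will repeatedly use that $\tau\le\tau^\sigma$ and $h^2\le h^{2\sigma}$, so any contribution of size $O(\tau^2)$, $O(\tau)$ or $O(h^2)$ is automatically dominated by the target $\tau(\tau^\sigma+h^{2\sigma})$.

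First I would dispose of the two commutator terms $e_1$ and $e_2$, which remain $O(\tau^2)$. For $e_1$ I bound the integrand in $H^1$ using \cref{lem:dB(v)w_2}: the inner argument is controlled in $H^1\cap L^\infty$ by $M_3$, while the transported quantity $B(v(\cdot))$ is controlled in $H^1\cap L^\infty$ via \cref{lem:B_2} and the isometry of $e^{it\Delta}$, giving $\|e_1\|_{H^1}\le C(M_3)\tau^2$. For $e_2$ I use $\|I_N\phi\|_{H^1}\lesssim\|\phi\|_{H^2}$ together with \cref{lem:dBvv_H2}, whose pointwise domination hypothesis $|B(\Phi_B^{\theta\tau}(P_N\vini))|\lesssim|\Phi_B^{\theta\tau}(P_N\vini)|$ and the $H^2$ bound $\|\Phi_B^{\theta\tau}(P_N\vini)\|_{H^2}\le C(M_3)$ from \cref{lem:bound_nonlinear_flow} are exactly as in \cref{prop:local_error_sigma>1/2}; this yields $\|e_2\|_{H^1}\le C(M_3)\tau^2$. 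Both are $\le C(M_3)\tau\cdot\tau^\sigma$.

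The heart of the argument is $e_3$, which I decompose into $e_3^1,\dots,e_3^4$ exactly as in \cref{e3_decomp_1}. For $e_3^1=B(e^{is\Delta}\vini)-B(\vini)$ and $e_3^2=B(\vini)-B(P_N\vini)$ I apply the $H^1$-Lipschitz estimate \cref{lem:diff_B_H1}, so that $\|e_3^1\|_{H^1}\lesssim\|(I-e^{is\Delta})\vini\|_{H^1}$ and $\|e_3^2\|_{H^1}\lesssim\|\vini-P_N\vini\|_{H^1}$. Here the higher regularity is used in an essential way: since $\vini=\psi(\cdot,t_k)\in H_\ast^3(\Omega)$, the boundary conditions $\vini|_{\partial\Omega}=\Delta\vini|_{\partial\Omega}=0$ let me upgrade the free-flow estimate to $\|(I-e^{is\Delta})\vini\|_{H^1}\lesssim s\|\vini\|_{H^3}\le C(M_3)\tau$ and the projection estimate to $\|\vini-P_N\vini\|_{H^1}\lesssim h^2|\vini|_{H^3}\le C(M_3)h^2$, both strictly sharper than the $\tau^{1/2}$ and $h$ rates one gets from $H^2$ alone, and both dominated by $\tau^\sigma$ and $h^{2\sigma}$. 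For the two remaining terms $e_3^3=(I-e^{is\Delta})P_NB(P_N\vini)$ and $e_3^4=(I_N-P_N)B(P_N\vini)$, which carry the order reduction, I invoke the new \cref{lem:fractional_error_2} with $\phi=P_N\vini$ (noting $\|P_N\vini\|_{H^3}\le M_3$): estimates \cref{eq:free_schrodinger_error_2,eq:inter_error_2} give directly $\|e_3^3\|_{H^1}\le C(M_3)\tau^\sigma$ and $\|e_3^4\|_{H^1}\le C(M_3)h^{2\sigma}$. Collecting the four pieces and using $\|e_3\|_{H^1}\le\tau\max_{0\le s\le\tau}(\cdots)$ yields $\|e_3\|_{H^1}\le C(M_3)\tau(\tau^\sigma+h^{2\sigma})$, and combining with the $e_1,e_2$ bounds completes the proof.

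The genuinely hard work has already been done in \cref{lem:fractional_error_2}; within this proposition the only delicate point is the boundary-condition bookkeeping needed to apply the $O(\tau)$ free-flow $H^1$ estimate and to keep $V\phi$ in $H_\ast^3$. This is exactly why the hypotheses $\psi\in C([0,T];H_\ast^3(\Omega))$ and $\nabla V\in H_0^1(\Omega)$ appear: the former supplies $\Delta\vini|_{\partial\Omega}=0$ for $e_3^1$, while the latter forces $\nabla V|_{\partial\Omega}=0$, which is precisely what makes $\Delta(V\phi)|_{\partial\Omega}=0$ and hence legitimizes the potential estimate \cref{Vphi_e} inside \cref{lem:fractional_error_2}. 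I expect no further obstacle, since every remaining inequality is a routine transcription of the $H^1$ bounds in \cref{prop:local_error_sigma>1/2} with $M_2$ replaced by $M_3$.
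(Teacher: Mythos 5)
Your proposal is correct and takes essentially the same route as the paper's own (deliberately terse) proof: the paper likewise reuses the decomposition and the $e_1$, $e_2$ bounds from \cref{prop:local_error_sigma>1/2} unchanged, and only replaces \cref{e3_12_H1,e3_34_H1} by the $O(\tau+h^2)$ estimates afforded by the $H_*^3$ regularity of $\vini$ together with the $O(\tau^\sigma+h^{2\sigma})$ bounds of \cref{lem:fractional_error_2} applied to $\phi=P_N\vini$, exactly as you do. Your additional boundary bookkeeping ($\Delta\vini|_{\partial\Omega}=0$ for the free-flow estimate, and $\nabla V|_{\partial\Omega}=0$ ensuring $V\phi\in H_*^3(\Omega)$) correctly accounts for the hypotheses and matches what the paper uses implicitly in \cref{Vphi_e}.
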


\begin{proof}
	Following the proof of \cref{prop:local_error_sigma>1/2}, we only need to modify the estimate \cref{e3_12_H1,e3_34_H1}, which can be easily done by using the assumption $ \psi \in C([0, T]; H^3) $, \cref{lem:fractional_error_2}, and the standard estimates of the operators $ I_N - P_N $, $ I - P_N $ and $ I - e^{i \tau \Delta} $.
\end{proof}

\begin{proof}[Proof of \cref{eq:thm2_part2} in \cref{thm:sigma>1/2H2}]
	Using \cref{prop:local_error_H3} and \cref{eq:H1stability_sigma>1/2} in \cref{eq:error_propagation_H1}, and noting the $ l^\infty $-bound of the numerical solution in \cref{eq:thm2_part1}, then \cref{eq:thm2_part2} follows from the discrete Gronwall's inequality immediately.
\end{proof}


\section{Numerical results}
In this section, we present some numerical examples for the NLSE \cref{NLSE} with $ 0 < \sigma < 1 $ in 1D to confirm our error estimates. Since we are mainly interested in the semi-smooth nonlinearity, we choose $ V(x) \equiv 0 $, and consider the following two initial set-ups:
\begin{description}
	\item[Type I] We consider the smooth initial datum
	\begin{equation}\label{typeI_ini}
		\psi_0(x) = x e^{-\frac{x^2}{2}}, \quad x \in \Omega=(-16, 16).
	\end{equation}
	
	\item[Type II] We consider the initial datum in $ H^2(\Omega) $ as in \cite{LRI_sinum}
	\begin{equation}\label{typeII_ini}
		\begin{aligned}
			&\psi_0 = \frac{\phi^{(1)}}{\| \phi^{(1)} \|_{L^2}}, \quad \phi^{(1)}(x) = \sum_{l \in \mathcal{T}_N} \widetilde \phi_l^{(1)} \sin(\mu_l(x-a)), \quad x \in \Omega = (-1, 1), \\
			&\widetilde \phi_l^{(1)} = \frac{\widetilde \phi_l}{|\mu_l|^{2.5}}, \quad \widetilde \phi_l = \left\{
			\begin{aligned}
				&\text{rand}(-1, 1) + i \  \text{rand}(-1, 1), && l \text{ even}, \\
				&0, && l \text{ odd},
			\end{aligned}
			\right. \quad l \in \mathcal{T}_N. 
		\end{aligned}
	\end{equation}
	where $ \text{rand}(-1, 1) $ returns a uniformly distributed random number between $ -1 $ and $ 1 $.
\end{description}
Note that both Types I and II initial data are chosen as odd functions to demonstrate the influence of the semi-smoothness of $ f $ at the origin since with an odd initial datum, the exact solution satisfies $ \psi(0, t) \equiv 0 $ for all $ t \geq 0 $. 
{In \cref{fig:type_1_ini} (a), we plot the density of the wave functions at $t=1$ with different $\sigma = 0.1, 0.25, 0.5, 1$ and $\beta = -10$ for the Type I initial datum. We observe that the solution of the smooth case ($\sigma = 1$) lies between the solution of the case $\sigma = 0.1$ and $\sigma = 0.5$, and is close to the solution of the case $\sigma = 0.25$. In \cref{fig:type_1_ini} (b), we plot the relative errors of the energy divided by $\tau$ up to $t=8$ for $\sigma = 0.1$ and different $\tau = 0.05, 0.01, 0.002$. We see that the relative error of the energy is at $O(\tau)$ with fixed mesh size $h$. }
\begin{figure}[htbp]
	\centering
	{\includegraphics[width=0.75\textwidth]{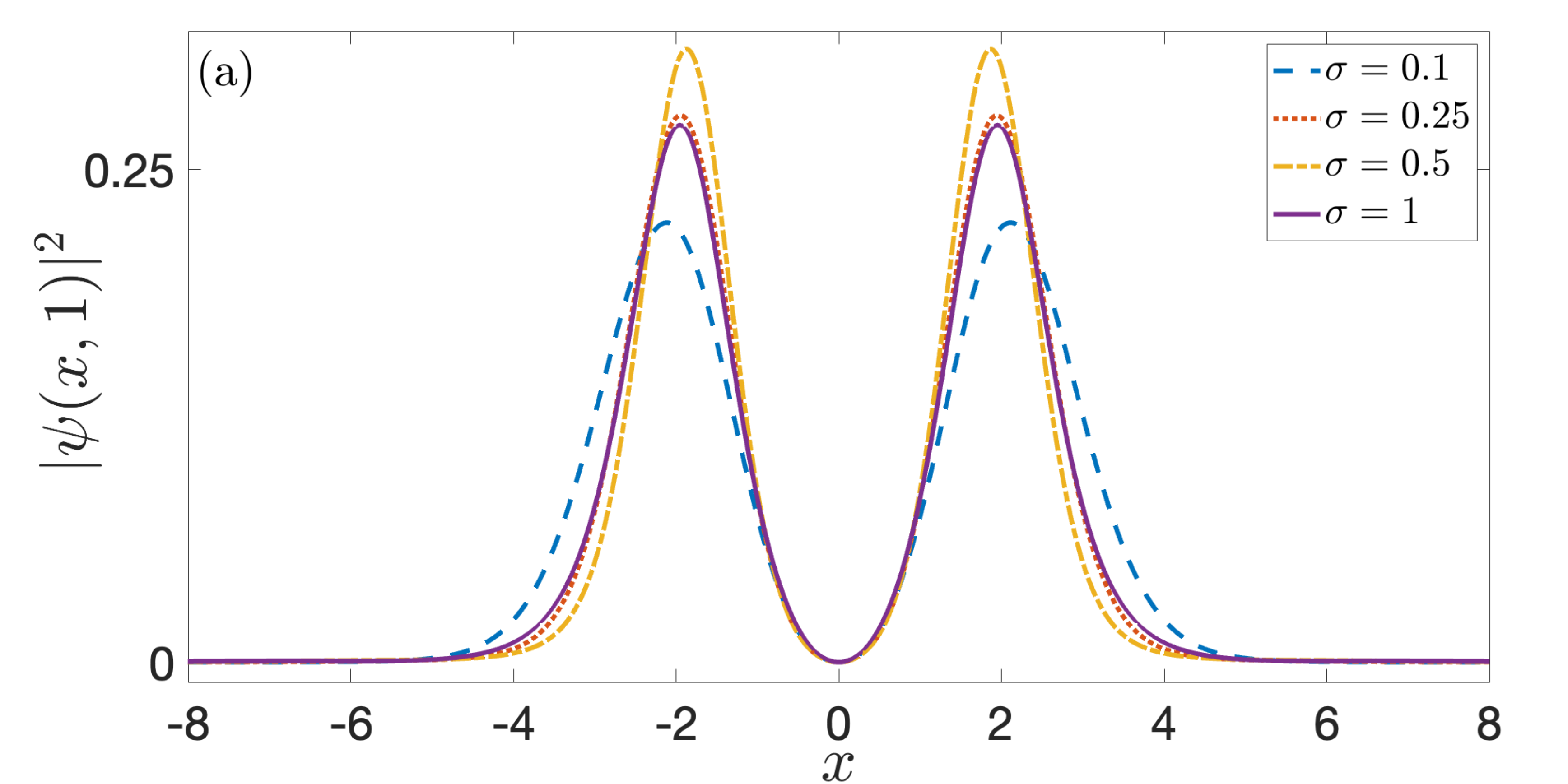}}\\
	{\includegraphics[width=0.75\textwidth]{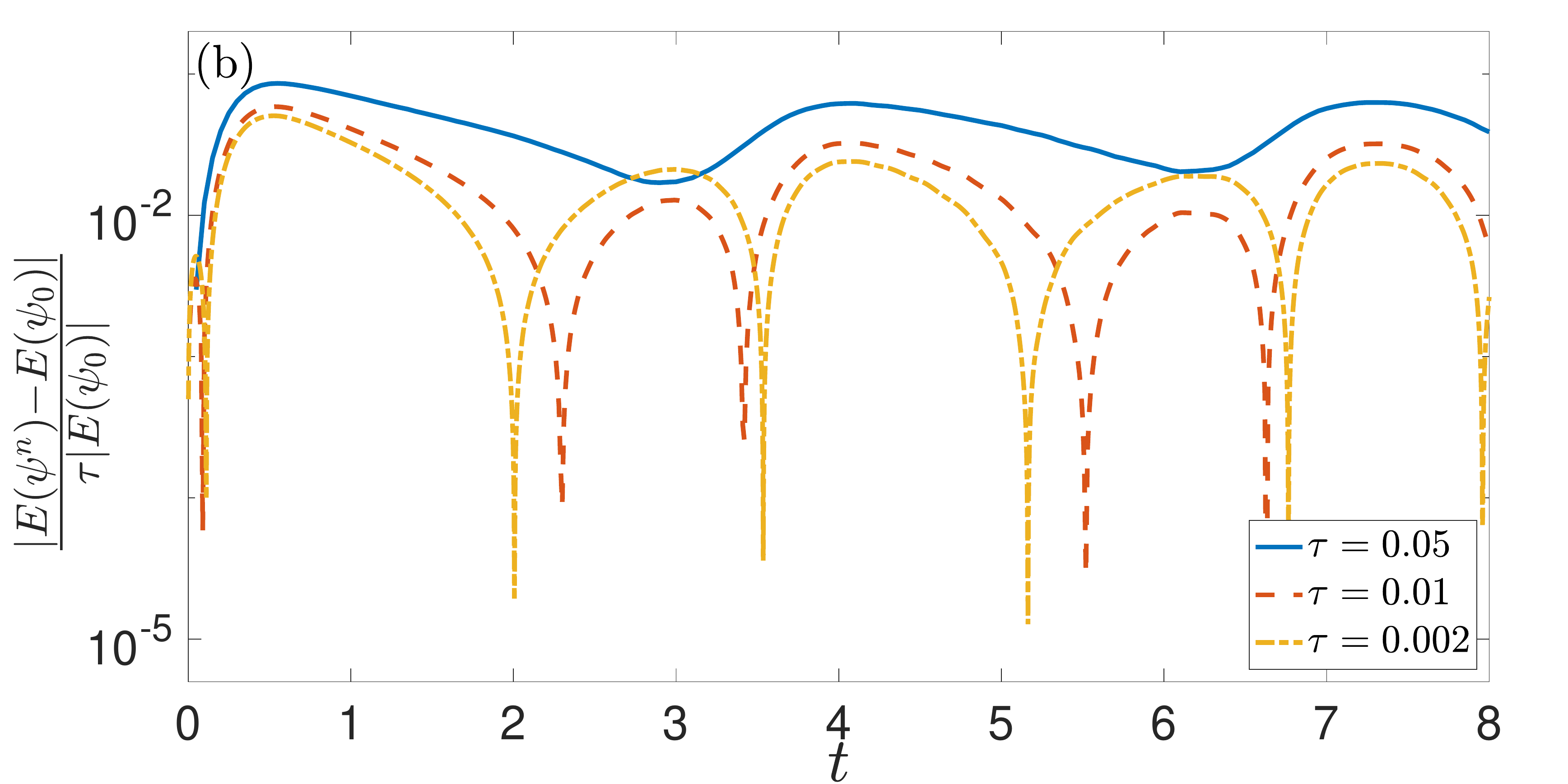}}
	\caption{(a) density $ |\psi(x, 1)|^2 $ with different $ \sigma >0 $ and (b) relative errors of the energy divided by $\tau$ up to $t=8$ with $\sigma = 0.1$ for the Type I initial datum \cref{typeI_ini} with $\beta = -10$. }
	\label{fig:type_1_ini}
\end{figure}

In the following, we shall test the errors of the TSSP in $L^2$- and $H^1$-norms. We fix $ d=1 $, $ T=1 $ and $\beta = -1$. The NLSE \cref{NLSE} is then solved by the TSSP method on the domain $ \Omega $ with Type I and Type II initial setups for different $ \sigma>0 $. The `exact' solution is obtained numerically by the Strang splitting sine pseudospectral method with a very fine mesh size $ h_e = 2^{-9} $ and a small time step size $ \tau_e = 10^{-6} $. In our numerical experiments below, when testing the temporal convergence, we always fix the mesh size $ h = h_e $. To quantify the error, we introduce the following error functions:
\begin{align*}
	e_{L^2}^k = \| \psi(\cdot, t_k) - I_N \psi^k  \|_{L^2}, \quad e^k_{H^1} = \| \psi(\cdot, t_k) - I_N \psi^k \|_{H^1}, \quad 0 \leq k \leq n: = T/\tau.
\end{align*}

\cref{fig:exmp1_L2_multi_sigma} exhibits the temporal and spatial errors in $ L^2 $-norm of the TSSP \cref{full_discretization_scheme} for the NLSE \cref{NLSE} with Type I initial datum and different $ 0<\sigma\leq 1/2 $. \cref{fig:exmp1_L2_multi_sigma} (a) shows that the temporal convergence is first order in $ L^2 $-norm for all the four $ \sigma $, and \cref{fig:exmp1_L2_multi_sigma} (b) shows the spatial convergence is almost third order in $ L^2 $-norm, which is also increasing with $ \sigma $. These results are better than our error estimates in \cref{thm:sigma<1/2} and suggest that first order temporal convergence in $ L^2 $-norm may hold for any $ \sigma > 0 $ and the spatial convergence may be of higher order. Similar results are also observed in our numerical experiments in 2D. However, we remark that it is impossible to obtain the optimal temporal convergence rates and the higher order spatial convergence rates by simply improving the local error estimates in \cref{prop:local_error_sigma<1/2}, indicating that there must exist error cancellation between different steps, which require new techniques and in-depth analysis to handle. {Also, we can observe similar temporal errors as in \cref{fig:exmp1_L2_multi_sigma} (a) for the Type II initial datum. }

	\begin{figure}[htbp]
		\centering
		{\includegraphics[width=0.475\textwidth]{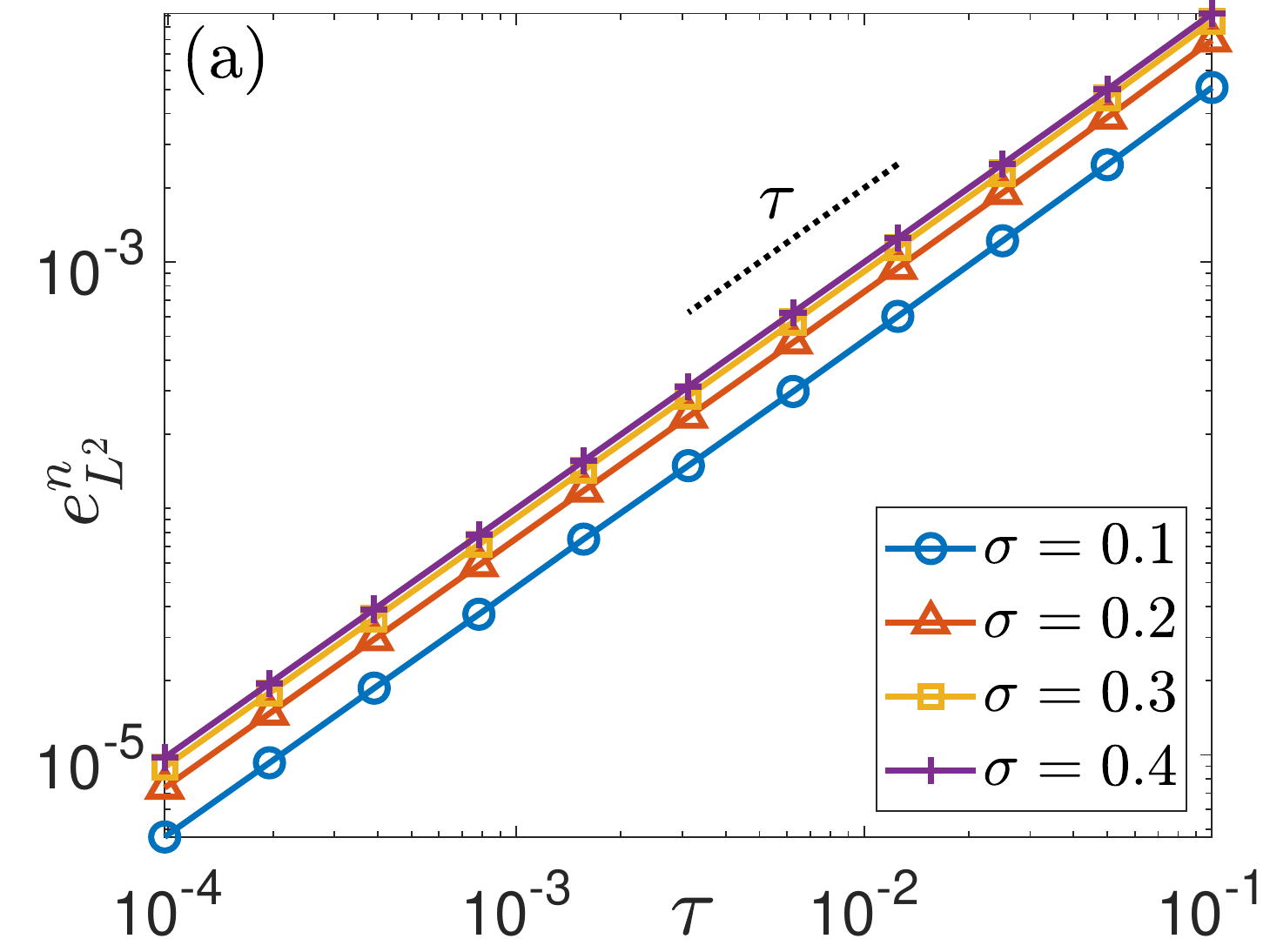}}
		{\includegraphics[width=0.475\textwidth]{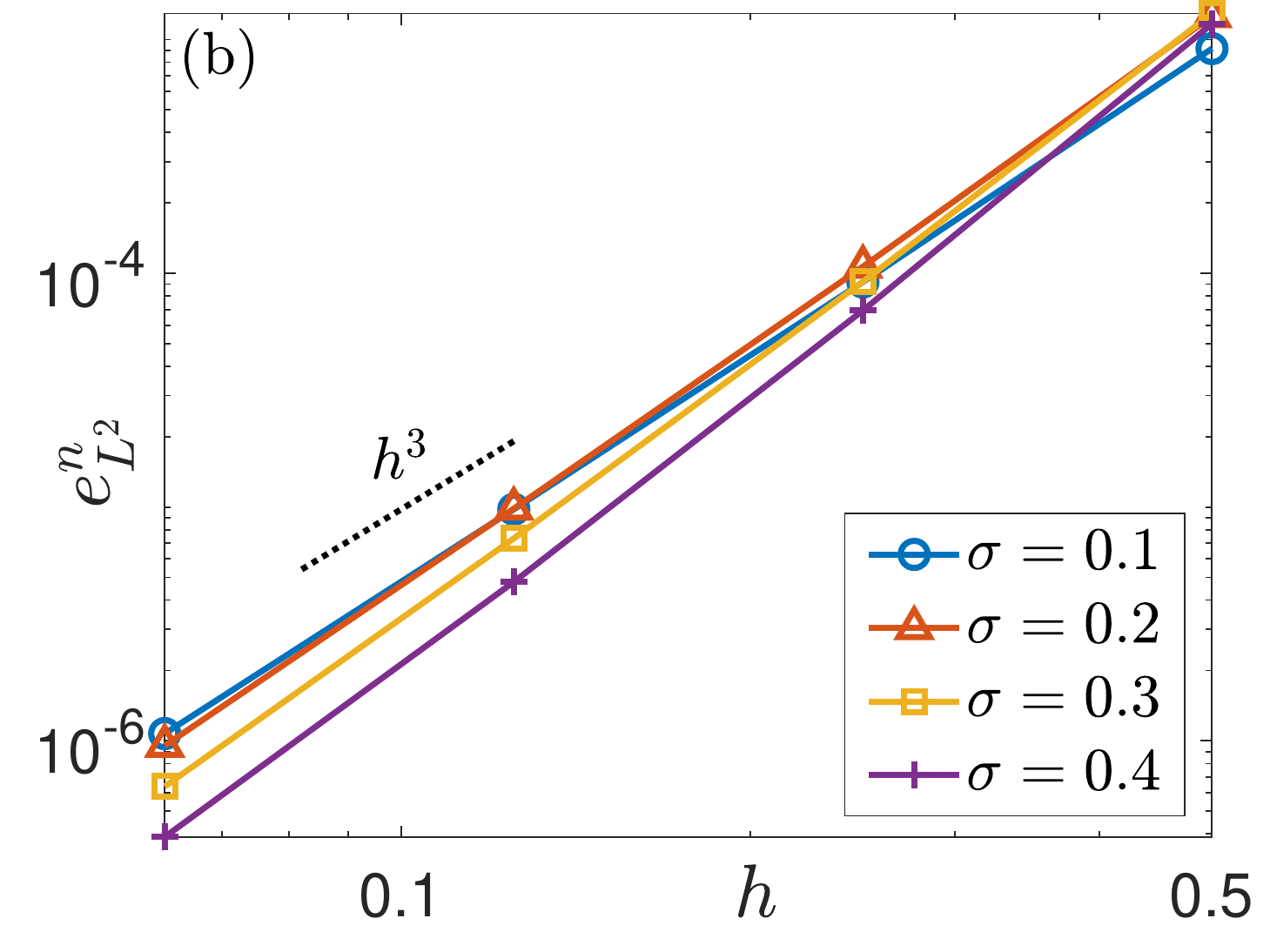}}
		\caption{Temporal errors (a) and spatial errors (b) in $ L^2 $-norm for $ \sigma=0.1, 0.2, 0.3, 0.4 $ with Type I initial datum \cref{typeI_ini}. }
		\label{fig:exmp1_L2_multi_sigma}
	\end{figure}

\cref{fig:exmp1_conv_dt} plots the temporal and spatial errors in $ L^2 $- and $ H^1 $-norm of the TSSP \cref{full_discretization_scheme} for the NLSE \cref{NLSE} with Type II $ H^2 $ initial datum and fixed $ \sigma = 0.5 $. \cref{fig:exmp1_conv_dt} (a) shows that the temporal convergence is first order in $ L^2 $-norm and half order in $ H^1 $-norm, and \cref{fig:exmp1_conv_dt} (b) shows the spatial convergence is second order in $ L^2 $-norm and first order in $ H^1 $-norm. These results correspond with our error estimates \cref{eq:thm2_part1} in \cref{thm:sigma>1/2H2} very well.

	\begin{figure}[htbp]
		\centering
		{\includegraphics[width=0.475\textwidth]{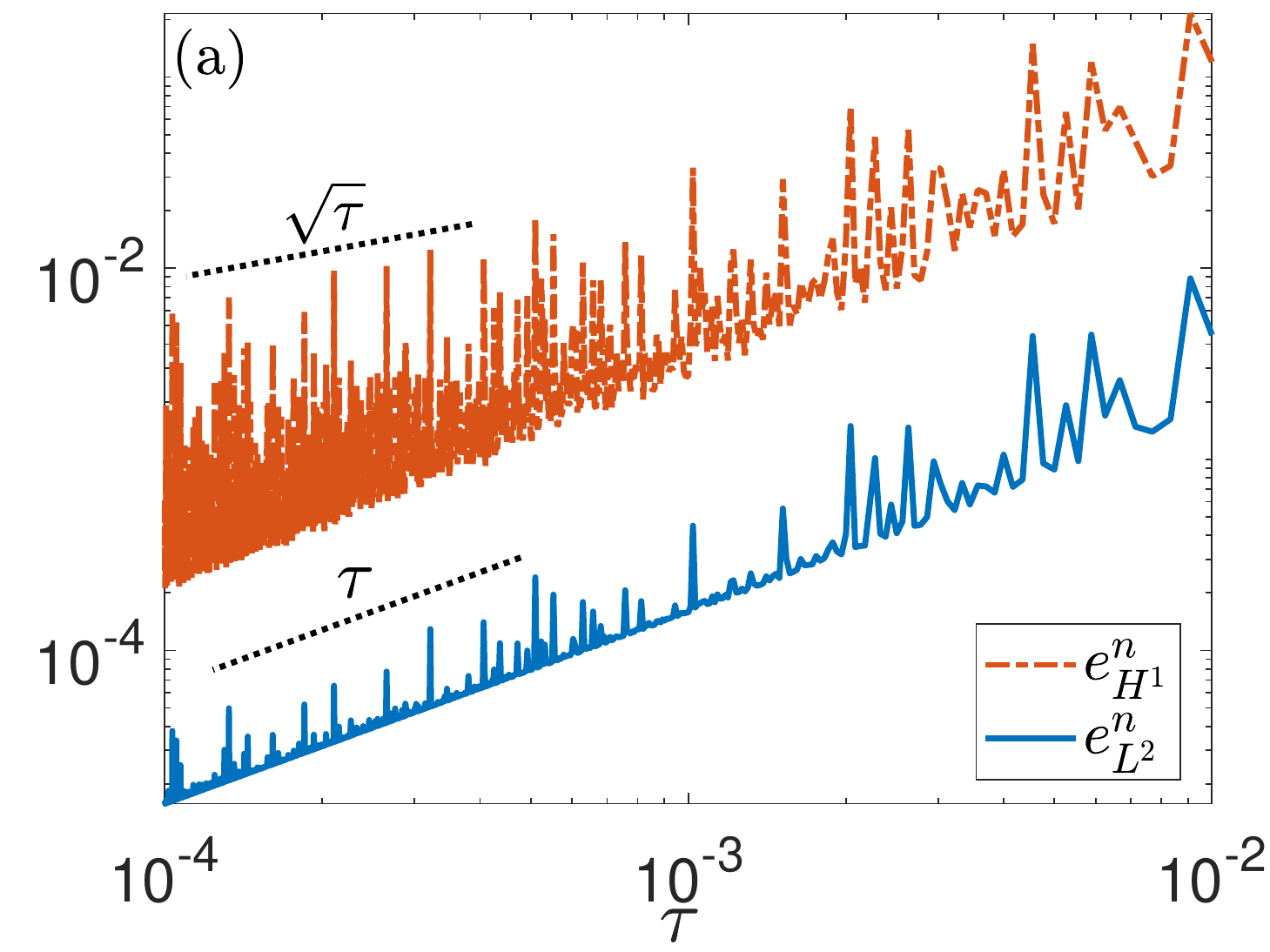}}
		{\includegraphics[width=0.475\textwidth]{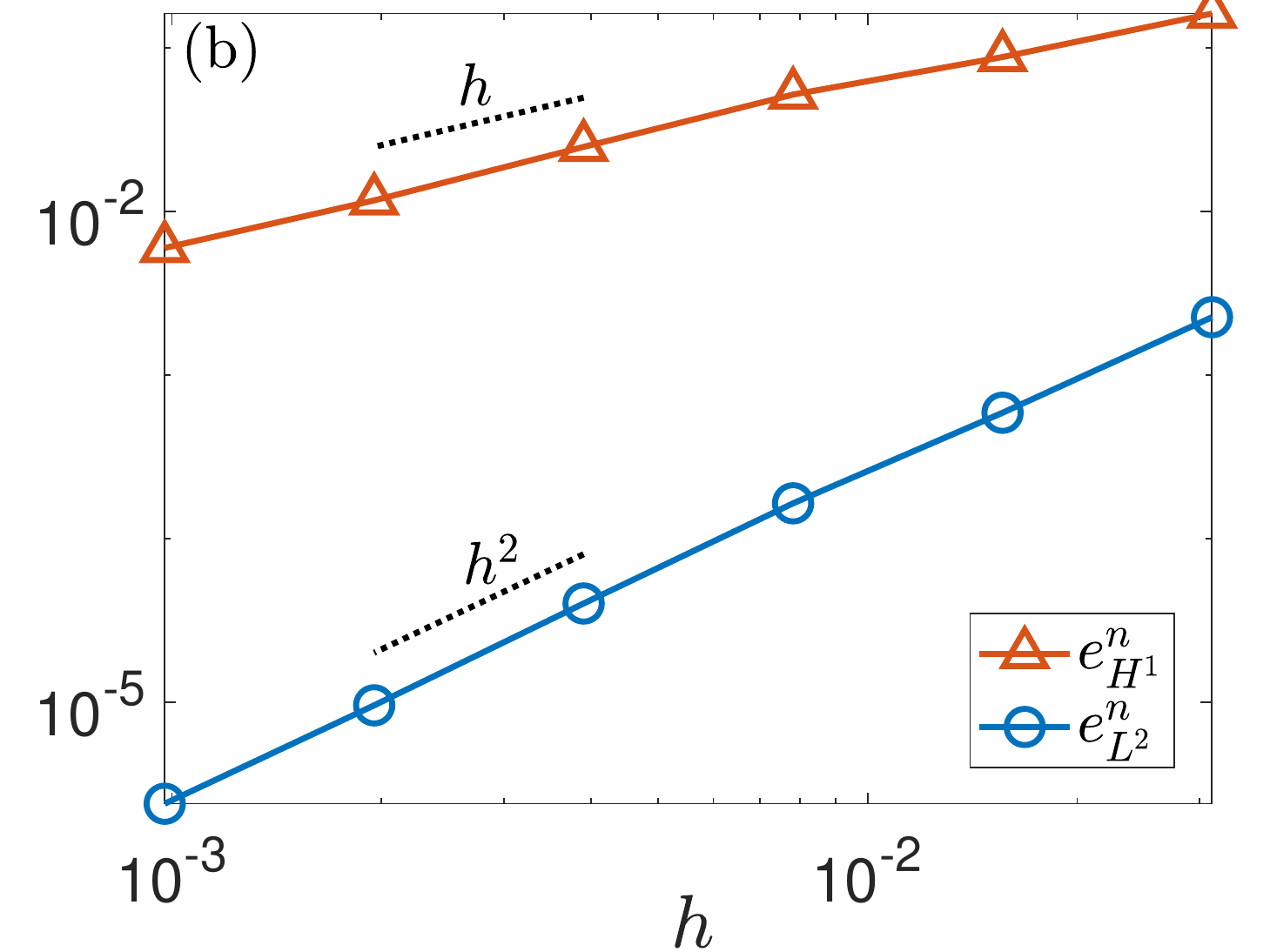}}
		\caption{Temporal errors (a) and spatial errors (b) in $ L^2 $-norm and $ H^1 $-norm for $ \sigma=0.5 $ with Type II initial data \cref{typeII_ini}. }
		\label{fig:exmp1_conv_dt}
	\end{figure}

\cref{fig:exmp3_H1_multi_sigma} displays the temporal and spatial errors in $ H^1 $-norm of the TSSP \cref{full_discretization_scheme} for the NLSE \cref{NLSE} with Type I smooth initial datum and different $ 0<\sigma<1 $. \cref{fig:exmp3_H1_multi_sigma} (a) shows that the temporal convergence in $ H^1 $-norm increases from half order to first order as $ \sigma $ increase from $ 0 $ to $ 1/2 $ and remains first order when $ \sigma \geq 1/2 $. \cref{fig:exmp3_H1_multi_sigma} (b) shows the spatial convergence is almost 2.5 order in $ H^1 $-norm and is increasing with $ \sigma $. Similar to the observation of \cref{fig:exmp1_L2_multi_sigma}, these results are better than our error estimates \cref{eq:thm2_part2} in \cref{thm:sigma>1/2H2} and suggest that first order temporal convergence in $ H^1 $-norm may hold for any $ \sigma \geq 1/2 $. We would like to comment that the order reduction in $ H^1 $-norm for $ 0 < \sigma < 1/2 $ is indeed resulted from the semi-smoothness of the nonlinearity instead of the regularity of the exact solution. Actually, we numerically checked that with the Type I smooth initial datum, the exact solution is roughly in $ H^{3.5+2\sigma} $.

	\begin{figure}[htbp]
	\centering
	{\includegraphics[width=0.475\textwidth]{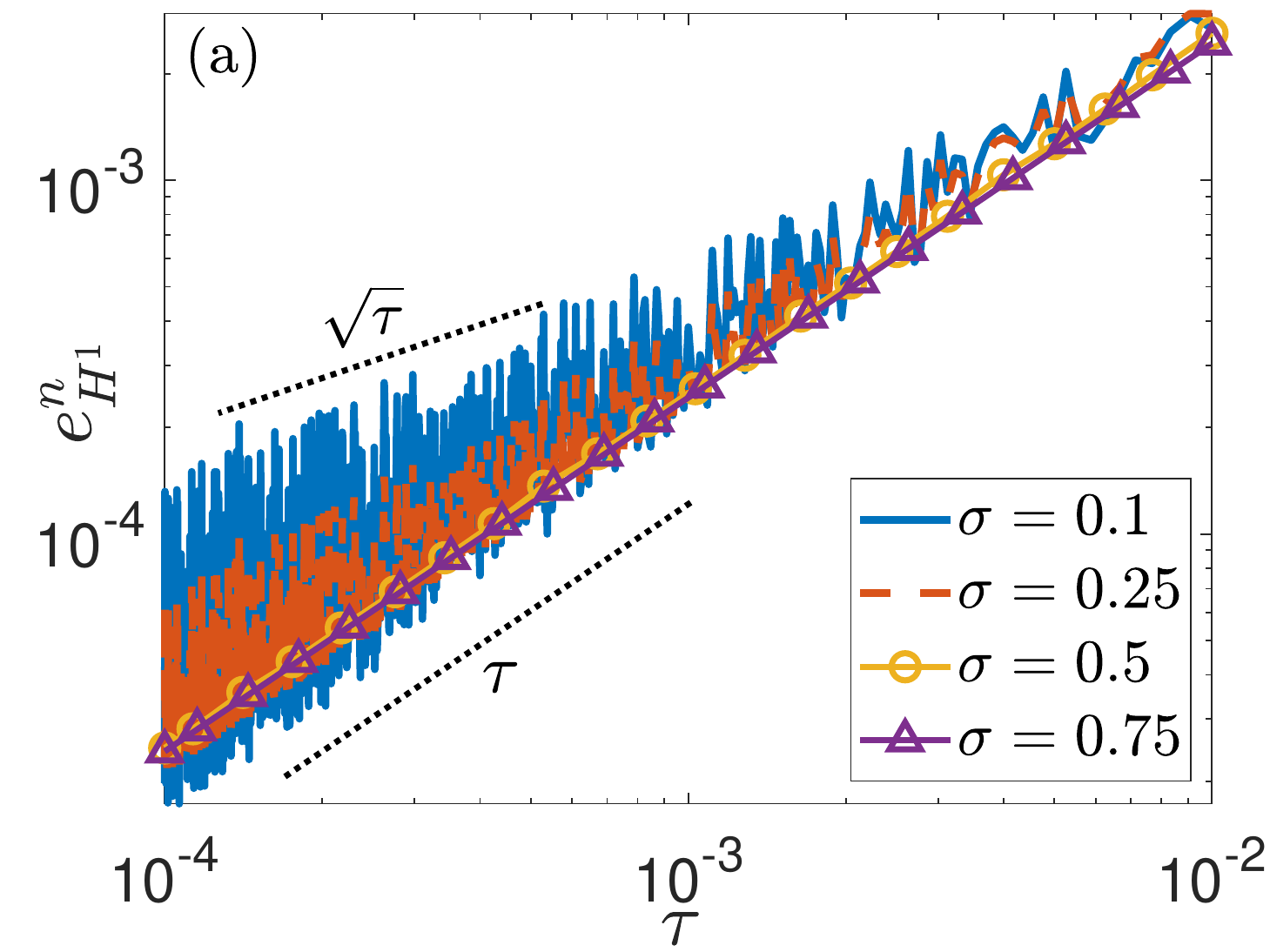}}
	{\includegraphics[width=0.475\textwidth]{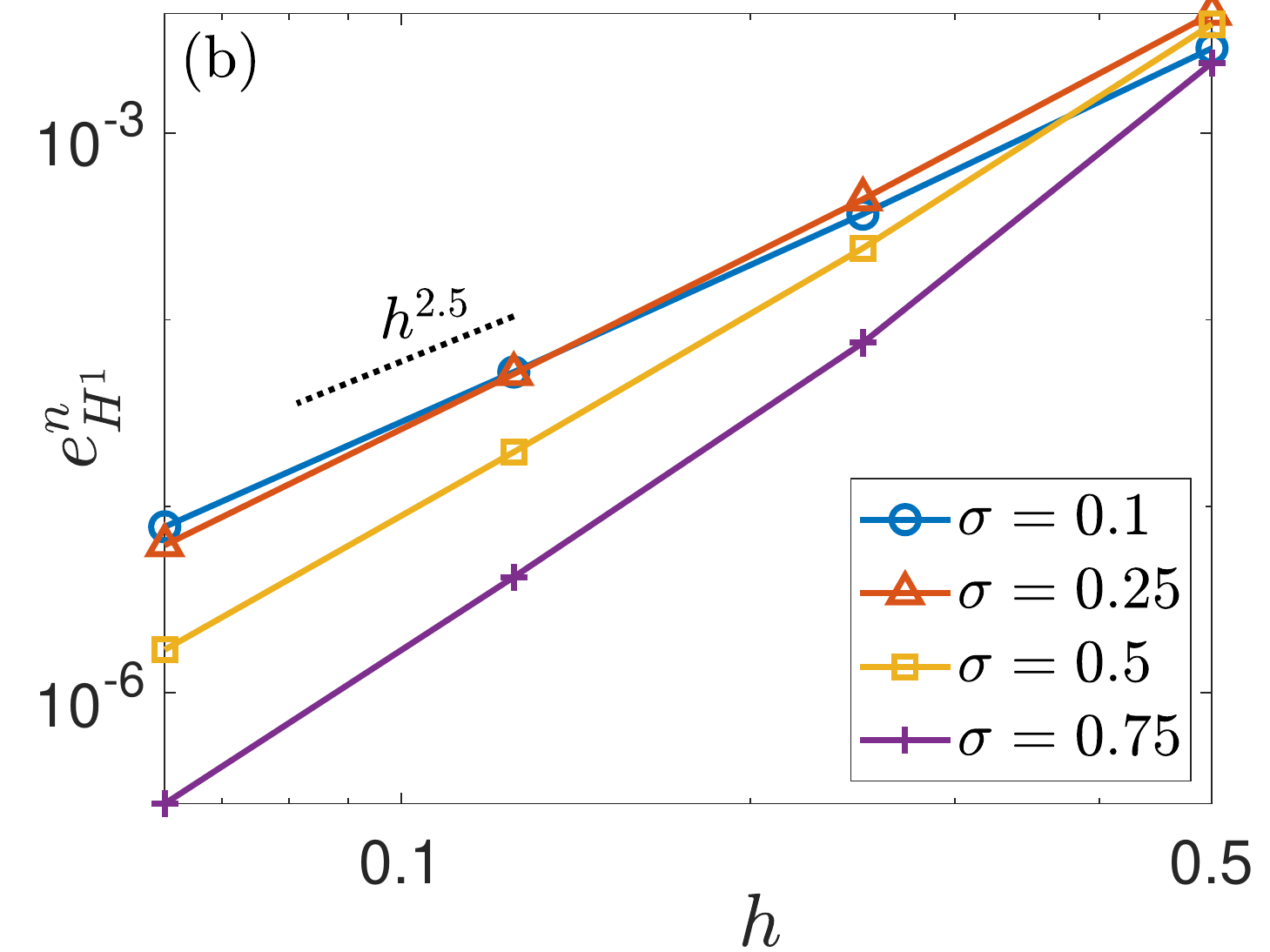}}
	\caption{Temporal errors (a) and spatial errors (b) in $ H^1 $-norm for $ \sigma = 0.1, 0.25, 0.5, 0.75 $ with Type I initial datum \cref{typeI_ini}. }
	\label{fig:exmp3_H1_multi_sigma}
	\end{figure}

\section{Conclusion}
Error bounds of the Lie-Trotter splitting sine pseudospectral method for the nonlinear Schr\"odinger equation (NLSE) with semi-smooth nonlinearity $ f(\rho) = \rho^\sigma (\sigma > 0)$ were established. For $0<\sigma\leq\frac{1}{2}$, we prove error bounds at $ O(\tau^{\frac{1}{2}+\sigma} + h^{1+2\sigma}) $ in $ L^2 $-norm without any CFL-type time step size restrictions, where $\tau>0$ and $h>0$ are the time step size and mesh size respectively. For $\sigma\geq\frac{1}{2}$, error bounds at $ O(\tau + h^{2}) $ in $ L^2 $-norm and at $ O(\tau^\frac{1}{2} + h) $ in $ H^1 $-norm are proved with mild time step size restrictions. In addition, when $\frac{1}{2}<\sigma<1$ and under the assumption of $ H^3 $-solution of the NLSE, we show an error bound at $ O(\tau^{\sigma} + h^{2\sigma}) $ in $ H^1 $-norm. Numerical results are reported to demonstrate our error estimates.


\section*{Appendix}
\begin{proof}[{Proof of \cref{discrete_embedding}}]
	We shall present the proof in 1D, and one can easily generalize it to higher dimensions. By triangle inequality, recalling that $\phi_j = \phi(x_j)$ for $j \in \mathcal{T}_N^0$, 
	\begin{align}\label{dGN-decomp}
		\| \dxp \phi \|_{l^4}^4
		&= h\sum_{j=0}^{N-1} |\dxp \phi_{j}|^4 = h\sum_{j=0}^{N-1} |(\dxp \phi_{j})^2 - (\dxp \phi_{0})^2 + (\dxp \phi_{0})^2||\dxp \phi_{j}|^2 \notag \\
		&\leq h\sum_{j=0}^{N-1} |(\dxp \phi_{j})^2 - (\dxp \phi_{0})^2||\dxp \phi_{j}|^2 + h\sum_{j=0}^{N-1} |\dxp \phi_{0}|^2|\dxp \phi_{j}|^2 \notag \\
		&= h\sum_{j=0}^{N-1} \left |\sum_{l=0}^{j-1}(\dxp \phi_{l+1})^2 - (\dxp \phi_{l})^2 \right ||\dxp \phi_{j}|^2 + |\dxp \phi_{0}|^2 \|\dxp \phi \|_{l^2}^2 =: K_1 + K_2.
	\end{align}
	We have separated the boundary terms $K_2$ from $K_1$. We start with the estimate of $K_1$, which is standard and can be obtained from the proof of the second inequality of (3.3) in \cite{bao2013}. We show it here for the convenience of the reader. Define the central difference operator $\dxx$ as
	\begin{equation}\label{dxx_def}
		\dxx v_{j} := \frac{v_{j+1} - 2 v_j + v_{j-1}}{h^2} = \frac{\dxp v_j - \dxp v_{j-1}}{h}, \quad 1 \leq j \leq N-1, \quad v \in Y_N. 
	\end{equation}
	For $K_1$ defined in \cref{dGN-decomp}, using triangle inequality and Cauchy inequality, we get
	\begin{align}\label{K_1_est}
		K_1
		&\leq h\sum_{j=0}^{N-1} \sum_{l=0}^{N-2} |\dxp \phi_{l+1} + \dxp \phi_{l} ||\dxp \phi_{l+1} - \dxp \phi_{l} ||\dxp \phi_{j}|^2 \notag \\
		&\leq h\sum_{j=0}^{N-1} |\dxp \phi_{j}|^2 \sqrt{\sum_{l=0}^{N-2} |\dxp \phi_{l+1} + \dxp \phi_{l} |^2} \sqrt{\sum_{l=0}^{N-2} |\dxp \phi_{l+1} - \dxp \phi_{l} |^2}  \notag \\
		&\lesssim \| \dxp \phi \|_{l^2}^2 \sqrt{h\sum_{l=0}^{N-1} |\dxp \phi_{l} |^2} \sqrt{h\sum_{l=1}^{N-1} |\dxx \phi_{l} |^2} =\| \dxp \phi \|_{l^2}^3 \sqrt{h\sum_{l=1}^{N-1} |\dxx \phi_{l} |^2}.
	\end{align}
	Since $ \phi \in X_N $, we have
	\begin{equation}\label{v_expansion}
		\phi_{j} = \phi(x_j) = \sum_{l=1}^{N-1} \widehat \phi_{l} \sin(\mu_l (x_j - a)) = \sum_{l=1}^{N-1} \widehat \phi_{l} \sin(\mu_l j h), \quad 0 \leq j \leq N, 
	\end{equation}
	which implies, by recalling \cref{dxx_def}, 
	\begin{equation}\label{eq:dxx_v}
		\dxx \phi_j = \frac{\phi_{j+1} - 2 \phi_j + \phi_{j-1}}{h^2} = -\sum_{l=1}^{N-1} \mu_l^2 \widehat \phi_{l} \left |\sinc(\mu_l h/2) \right |^2 \sin(\mu_l j h), 
	\end{equation}
	where $\sinc(x)=\sin(x)/x$ for $x \in \R$ with $\sinc(0) = 1$. By Parseval's identity, noting \cref{eq:dxx_v} and $|\sinc(x)| \leq 1$ for $x \in \R$, we get (similar to the proof of \cref{eq:H1_I_N})
	\begin{equation}\label{K_1_est_2}
		h\sum_{j=1}^{N-1} |\dxx \phi_{j} |^2 = \frac{Nh}{2} \sum_{l=1}^{N-1} \mu_l^4 \left |\widehat \phi_{l} \right |^2 \left |\sinc(\mu_l h/2) \right |^4 \leq \frac{Nh}{2} \sum_{l=1}^{N-1} \mu_l^4 \left |\widehat \phi_{l} \right |^2 \leq \| \phi \|_{H^2}^2. 
	\end{equation}
	Plugging \cref{K_1_est_2} into \cref{K_1_est} and using \cref{eq:H1_I_N}, we have
	\begin{equation}\label{K_1}
		K_1 \lesssim \| \phi \|_{H^2} \| \phi \|_{H^1}^3 \leq \| \phi \|_{H^2}^4. 
	\end{equation}
	For $K_2$ in \cref{dGN-decomp}, recalling \cref{v_expansion} and $|\sinc(x)| \leq 1$ for $x \in \R$, and noting that $\sum_{l=1}^\infty |\mu_l|^{-2} < \infty$, we have, by Cauchy inequality, 
	\begin{equation*}
		| \dxp \phi_{0} |  = \left |\frac{\phi_1}{h} \right | =\left | \sum_{l=1}^{N-1} \mu_l \widehat \phi_{l} \sinc(\mu_l h) \right | \leq \sqrt{\sum_{l=1}^{N-1} |\mu_l|^4 |\widehat \phi_{l}|^2}  \sqrt{\sum_{l=1}^{N-1} |\mu_l|^{-2}} \lesssim \| \phi \|_{H^2},
	\end{equation*}
	which implies, by using \cref{eq:H1_I_N} again, 
	\begin{equation}\label{K_2}
		K_2 = |\dxp \phi_{0}|^2 \|\dxp \phi \|_{l^2}^2 \lesssim \| \phi \|_{H^2}^2 \| \nabla \phi \|_{H^1}^2 \leq \| \phi \|_{H^2}^4. 
	\end{equation} 
	Plugging \cref{K_1,K_2} into \cref{dGN-decomp} yields the desired result.
\end{proof}

\bibliographystyle{myamsplain}

\end{document}